\theoremstyle{plain}
\newtheorem*{theorem*}{Theorem}
\newtheorem{theorem}{Theorem}[section]
\newtheorem{proposition}[theorem]{Proposition}
\newtheorem*{claim*}{Claim}
\newtheorem{conjecture}[theorem]{Conjecture}
\newtheorem{problem}[theorem]{Problem}
\newtheorem{question}[theorem]{Question}
\theoremstyle{remark}
\newcommand{\F}{\mathcal{F}}
\let\emptyset\varnothing
\let\originalleft\left
\let\originalright\right
\renewcommand{\left}{\mathopen{}\mathclose\bgroup\originalleft}
\renewcommand{\right}{\aftergroup\egroup\originalright}
\date{}
\title{Constructions in combinatorics via neural networks}
\author{
Adam Zsolt Wagner\thanks{School of Mathematical Sciences, Tel Aviv University, Tel Aviv, Israel. Email: \texttt{adamwagner@mail.tau.ac.il}}}
\begin{document}
\maketitle

\begin{abstract}
We demonstrate how by using a reinforcement learning algorithm, the deep cross-entropy method, one can find explicit constructions and counterexamples to several open conjectures in extremal combinatorics and graph theory. Amongst the conjectures we refute are a question of Brualdi and Cao about maximizing permanents of pattern avoiding matrices, and several problems related to the adjacency and distance eigenvalues of graphs. 
\end{abstract}

\section{Introduction}

Computer-assisted proofs have a long history in mathematics, including breakthrough results such as the proof of the four color theorem in 1976 by Appel and Haken~\cite{4ct}, and the proof of the Kepler conjecture in 1998 by Hales~\cite{kepler}. Recently, significant progress has been made in the area of machine learning algorithms, and they have have quickly become some of the most exciting tools in a scientist's toolbox. In particular, recent advances in the field of reinforcement learning have led computers to reach superhuman level play in Atari games~\cite{atari} and Go~\cite{go}, purely through self-play.

With these recent advances in mind, it is natural to wonder if there is any way reinforcement learning algorithms could be used to find explicit counterexamples to conjectures in combinatorics and graph theory, if we do not give the algorithm any prior knowledge about the problems. Our aim is to demonstrate that this approach could potentially be fruitful: while we did not succeed in refuting any of the most famous conjectures in the field, we will present counterexamples and constructions to a wide variety of lesser known open problems. 

In all applications, the agent starts the learning process with zero prior knowledge about the problem itself. It generates a construction and receives a feedback from a scoring function on how well it did. Repeating this process many times, the agent eventually learns how to get a better score in this `game'. If this score is larger than the score of the conjectured best construction, then we have succeeded in finding a counterexample to the conjecture. An advantage of using reinforcement learning algorithms in this manner is that we can use what is essentially the same exact program to try and attack all mathematical conjectures which might have a finite counterexample -- the only thing we need to change in the code is the function that calculates the score of a given construction.

There are several natural candidate reinforcement learning algorithms for this task. In this work we are primarily focused on conjectures in graph theory, and as an $n$-vertex graph can be represented by $\frac{n(n-1)}{2}$ binary decisions, one for each edge, we will focus on algorithms that are well-suited for problems with small action spaces. One may initially assume that the most natural choice would be to use Deep Q-Networks and their variants, such as Double Deep Q-Networks~\cite{double} and Dueling Deep Q-Networks~\cite{dueling}, which have found a lot of success in recent years. However, these methods take a long time to train in our sparse reward setting, where we only give the agent feedback at the very end of each session (so roughly at every $\frac{n^2}{2}$ steps if we generate a graph edge-by-edge). While there are several  ways to potentially resolve this issue, such as giving some kind of artificial reward during the sessions to guide the agent, doing so would introduce its own set of problems and would defeat our goal of refuting conjectures without prior knowledge about the problem.

With our limited computing resources, we have found significantly more success with the algorithm called the \emph{deep cross-entropy method}. While this algorithm is not as famous than the above-mentioned Deep Q-Networks, it has good convergence and appeared to be much less sensitive to choosing the right hyperparameters. We will present some examples where this approach has resulted in counterexamples and constructions that would have been difficult to find by hand or with other programming methods. At the end of the paper we also present two counterexamples that we found by applying LP solvers to conjectures that can be formulated as linear programs.

The paper is organized as follows.
\begin{itemize}
\item In Section~\ref{sec:neural} we present our results we obtained via the deep cross-entropy method.
\begin{itemize}
    \item In~\S\ref{subsec:crossentropy} we give a short introduction to the cross-entropy method and describe how we will use it to produce constructions to extremal combinatorics problems.
    \item In~\S\ref{subsec:matchingeigen} we illustrate the method by finding  counterexamples to a conjecture about the sum of the largest eigenvalue and matching number of graphs, which was proposed in~\cite{aouch}. 
    \item In~\S\ref{subsec:aouch2} we refute a similar conjecture of Aouchiche--Hansen~\cite{aouchhansen} about the distance spectrum and proximity of graphs. 
    \item In~\S\ref{subsec:peaksfar} we refute an old conjecture of Collins~\cite{collins} by showing that the peaks of the coefficient sequences of the adjacency and distance polynomials of trees can be far apart.
    \item In~\S\ref{subsec:transmission} we show that transmission regularity of graphs is not preserved under cospectrality of the distance Laplacian, answering a question of Hogben and Reinhart~\cite{surveydistance}.
    \item In~\S\ref{subsec:perm312} we address a problem of Brualdi and Cao~\cite{brualdi} about maximizing the permanent of an $n\times n$, 312-pattern avoiding binary matrix. Among others, we find that the best possible answers for $n\leq 8$ are given by the rather remarkable sequence
    $$1,\quad 2,\quad 4,\quad 8,\quad 16,\quad 32,\quad 64,\quad 120.$$
\end{itemize}
\item In Section~\ref{sec:lp} we present two constructions obtained via LP solvers.
\begin{itemize}
    \item In~\S\ref{subsec:carla} we refute a conjecture of Aaronson–Groenland–Grzesik–Kielak–Johnston~\cite{aaronson} about a problem of covering certain subsets of the hypercube with few hyperplanes.
    \item In~\S\ref{subsec:domotor} we answer a problem of Kir\'aly--Nagy--P\'alv\"olgyi--Visontai~\cite{domotor} about the maximum size of weakly cross-intersecting set-pair systems.
\end{itemize}
\end{itemize}

\section{Reinforcement learning and the cross-entropy method}\label{sec:neural}

Reinforcement learning methods have a long history of being used to tackle complex combinatorial optimization problems. In recent years this area has seen a myriad of new developments and scientific papers appearing. Often new results are concerned with the following question: how is it possible to tackle NP-hard problems, such as the traveling salesman problem, in practical time? For a general overview we refer the reader to the numerous  surveys written in the past three years, such as~\cite{nnsurvey1,nnsurvey2,nnsurvey3,nnsurvey4}. 

In the present paper we take a  different approach. We will use reinforcement learning methods to find explicit counterexamples to some conjectures in combinatorics and graph theory. In this section we will present five examples where this approach was successful. The common theme in these examples that we chose to present is that the constructions found by the neural network are  non-trivial, and it is not clear how one could have found these by hand or by relying only on more traditional programming methods.

There are a plethora of algorithms within reinforcement learning, here we will focus on the so-called \emph{deep cross-entropy method}, which we will use in all five applications to find the constructions. We will use a deep neural network to approximate the policy, which determines what action we should take in a given state. With the cross-entropy method, the neural network learns only to predict which move is best in a given state, and does not explicitly learn a value function for the states or state-action pairs. Given any state as an input to the neural net, the output is a probability distribution on all the possible moves in that state, with higher probability assigned to the moves that the agent thinks are best.

In this work we will only give a basic description of the cross-entropy method and focus on how we applied this method in practice to find the constructions. For a basic introduction to neural networks, and a thorough description of the theoretical background of the cross-entropy method and of various practical considerations when applying it, we refer the reader to the recent book~\cite[Chapter 4]{lapan2020deep}.

\subsection{Applying the cross-entropy method to problems in extremal combinatorics}\label{subsec:crossentropy}

Our very first step in attacking an extremal problem is to find a good way to encode constructions as words. Given a combinatorial problem, we can often easily translate it to a problem about generating a word of certain length from a finite alphabet. For example, if our task is to find a large antichain in the Boolean lattice $\{0,1\}^n$ then the answer can be represented by a 0-1 string of length $2^n$, where the $i$-th bit is one precisely if the $i$-th element in some linear ordering of $\{0,1\}^n$ is included in the antichain. Similarly, generating $n$-vertex graphs is equivalent to generating 0-1 sequences of length $\frac{n(n-1)}{2}$; trees on $n$ vertices can be represented by their Pr\"ufer codes as words of length $n-2$ on an alphabet of size $n$, and so on.

We will generate constructions with the neural network as follows. We first ask it to predict what the best first letter should be. The output is a probability distribution on the alphabet, from which we can sample an element randomly and feed it back into the network to ask what the best second letter is. In general, having generated the letters $a_1,a_2,\ldots,a_{k-1}$, we can feed this partial word on $k-1$ letters into the network to get a probability distribution on the best next letter to use, conditioned on our previous $k-1$ decisions, and sample from it randomly to obtain $a_k$.

\begin{figure}[hbt]
    \centering
    \includegraphics{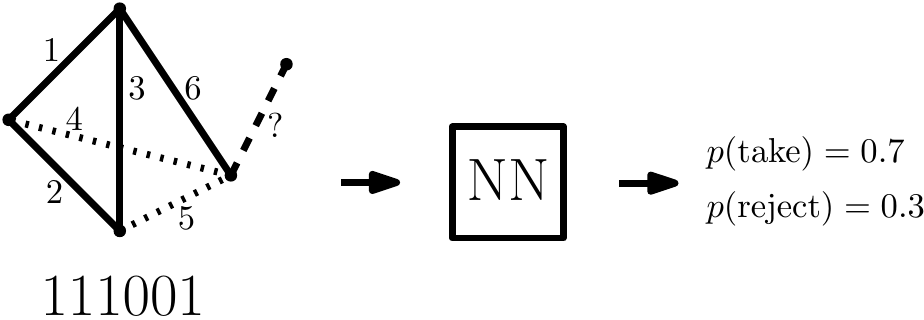}
    \caption{We are generating a graph, and so far the network has included edges 1,2,3,6, and rejected edges 4,5 (dotted), corresponding to the sequence 111001. We input this into the neural network to get a probability distribution on whether to include or reject the next edge.}
    \label{fig:nn_illustration}
\end{figure}

~

\begin{algorithm}[H]
\SetAlgoLined
 Initialize a neural network\;
 \While{the best construction found is not a counterexample}{
\For{$i\gets1$ \KwTo $N$}{
 $w\gets $ empty string\;
 \While{not terminal}{
 Input $w$ into the neural net to get a probability distribution $F$ on the next letter\;
 Sample next letter $x$ according to $F$\;
 $w\gets w+x$\; 
 }
}
  Evaluate the score of each construction\;
  Sort the constructions according to their score\;
  Throw away all but the top $y$ percentage of the constructions\;
  \For{all remaining constructions}{
    \For{all (observation, issued action) pairs in the construction }{
         Adjust the weights of the neural net slightly to minimize the cross-entropy loss between issued action and the corresponding predicted action probability\;
        }
    }
  Keep the top $x$ percentage of constructions for the next iteration, throw away the rest\;
 }
 \caption{The deep cross-entropy method}
\end{algorithm}

The network receives feedback only when a session finishes, i.e.~when we have created the entire word. The feedback is given by a reward function that the neural network has no knowledge about and treats as a black box. This reward function is unique to every problem. When trying to create large antichains the reward might be the number of elements in the family minus the number of comparable pairs in it; when aiming to create a graph with a large third eigenvalue it might simply be the third eigenvalue, etc. This reward at the end of each session is the only feedback the network receives: at no point does it have any knowledge on what problem it is trying to solve and how the reward is calculated. 

In every iteration we generate a large number of random sessions (constructions) following the above method. We calculate the reward for each, and throw away all but the top $y$ percentile. Next we let the neural network learn from the remaining sessions, meaning that we adjust the weights of the neural network a little bit to make it more likely to output moves that were used in the best performing sessions. The idea of this is to reinforce our neural network to carry out those actions that have led to good rewards.  

Note that in most implementations of the cross-entropy method found online, at the end of each iteration either all sessions are discarded, or the top few sessions are kept alive for a few more iterations. In some cases we had better success by letting the top few sessions survive indefinitely, until they are outperformed by other sessions. This change was implicitly suggested in~\cite{tds}, where they recommend a similar modification when the number of successful sessions is small.

The implementation of the input is as follows. When generating $n$-vertex graphs the input is two vectors of length $\frac{n(n-1)}{2}$. The first vector has a 1 in all positions that correspond to edges that the agent has decided to take, and it has a 0 in all the places that the agent has rejected, or not considered yet. The second vector has a 0 in every position, except for the one position that corresponds to the edge that is currently being considered, where it has a 1. The purpose of the second vector is so that the agent can figure out which edge it is currently making a verdict on. In all applications where we generate a binary string, the neural network has three hidden layers with 128, 64 and 4 neurons respectively. The learning rate is chosen experimentally to strike a balance between the speed of convergence and avoiding getting stuck in suboptimal constructions.

This representation of the input as two one-dimensional vectors and the use of a sequential network of dense layers makes this a simple, multi-purpose setup to use. For specific problems, for example when the task is to construct graphs with certain properties, other input formats and networks architecture such as graph neural networks could be considered. See~\cite{deepmind} and the book~\cite{gnnbook} for an overview of the recent applications of graph neural networks for combinatorial optimization problems.
For the related problem of having to generate a graph that looks similar to a fixed target collection of graphs, other generative models are discussed in the survey~\cite{graphgensurvey}. In particular, Generative Adversarial Networks were used in~\cite{gan1} where the graphs were constructed via random walks, and in~\cite{gan2} where the authors predict the adjacency matrix at once, and utilize a permutation-invariant discriminator for the vertex set.

The full Python code used in Sections~\ref{subsec:matchingeigen} and~\ref{subsec:aouch2} is available here:~\cite{code}.

\subsection{Example: the sum of matching number and largest eigenvalue of graphs}\label{subsec:matchingeigen}

We will use the following conjecture appearing in~\cite{aouch} to illustrate our method. 

\begin{conjecture}[\cite{aouch}\label{conj:aouch}]
Let $G$ be a connected graph on $n\geq 3$ vertices, with largest eigenvalue $\lambda_1$ and matching number $\mu$. Then
$$\lambda_1+\mu \geq \sqrt{n-1} +1.$$
\end{conjecture}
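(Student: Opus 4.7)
My plan is to refute Conjecture~\ref{conj:aouch} by running the deep cross-entropy method from the previous subsection. I represent an $n$-vertex graph as a binary string of length $\binom{n}{2}$ by fixing an ordering of the potential edges and letting bit $i$ record whether edge $i$ is present. The neural network then generates this string bit-by-bit, conditioning each decision on the partial string so far, exactly as described in Section~\ref{subsec:crossentropy}.

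The key design choice is the reward function. For a connected graph $G$ I would set the reward to $\sqrt{n-1} + 1 - \lambda_1(G) - \mu(G)$, so that any strictly positive reward is by definition a counterexample; both $\lambda_1$ and $\mu$ are cheap to compute with a symmetric eigensolver and a standard maximum matching routine. For disconnected $G$ I would attach a smooth penalty such as $-c \cdot (\text{components of } G - 1)$ rather than a single large negative constant, so that the agent is nudged toward connectivity even from a near-random initial policy. I would then iterate the cross-entropy loop exactly as in the algorithm box: generate $N$ sessions from the current policy, retain the top $y$-percentile by reward (together with a few elite sessions kept alive across iterations, as the paper suggests), update the network to imitate those moves, and repeat until the best score becomes positive. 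I would start with modest $n$ and gradually increase it, reusing a network trained for small $n$ to warm-start the search for larger $n$ if possible.

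The main obstacle I anticipate is the star $K_{1,n-1}$. For $n \geq 4$ it is the unique connected graph with $\mu = 1$, and it achieves $\lambda_1 + \mu = \sqrt{n-1} + 1$ exactly, so any counterexample must have $\mu \geq 2$ and hence must shave more than $1$ off of $\lambda_1$ to compensate the extra unit of matching number. This is a deep local optimum: flipping a single bit in a star either disconnects the graph or increases $\mu$ with very little compensating drop in $\lambda_1$, so a policy that drifts toward the star early on is unlikely to recover. To escape it I would tune the percentile cutoff and learning rate so the policy does not collapse prematurely, and rely on the elite-session mechanism so that a rare non-star construction which happens to score well is not forgotten. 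A secondary difficulty is that most binary strings in $\{0,1\}^{\binom{n}{2}}$ with low edge density encode disconnected graphs; the smooth connectivity penalty above is important so that the agent first reliably learns to be connected and only afterwards focuses on minimizing $\lambda_1 + \mu$ below the conjectured bound.
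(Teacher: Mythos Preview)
Your proposal is correct and mirrors the paper's approach almost exactly: the authors also run the cross-entropy method with reward (equivalently) $-(\lambda_1+\mu)$, and find that the smallest counterexample occurs at $n=19$, namely the balanced double star with $\lambda_1=\sqrt{10}$ and $\mu=2$. Your engineering concerns about a connectivity penalty and the star as a local optimum are sensible but did not surface as obstacles in the paper's runs; the network drifted on its own toward sparse trees and then to the double star, so your plan would have worked provided you pushed $n$ up to at least $19$.
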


Conjecture~\ref{conj:aouch} was originally obtained using AutoGraphiX~\cite{autographix}, a software that can be used to find conjectures about relations between various graph parameters in an automated way. After we have found a counterexample with our method, we have learned that Conjecture~\ref{conj:aouch} has already been disproved by Stevanovi\'c~\cite{stevanovic}. Nevertheless, we can use this conjecture to demonstrate how simple it can be to find constructions with the cross-entropy method. The counterexample given in~\cite{stevanovic} has $n=600$ vertices, but we can use our methods to find a smaller explicit counterexample.

We used the cross-entropy method with $n=19$, where the reward function we use is simply $\lambda_1+\mu$, which we will try to minimize. Figure~\ref{fig:aouch_evolve} shows how the average reward of the top 10\% of sessions of each iteration evolves over time. 

\begin{figure}[hbt]
    \centering
    \includegraphics[scale=0.8]{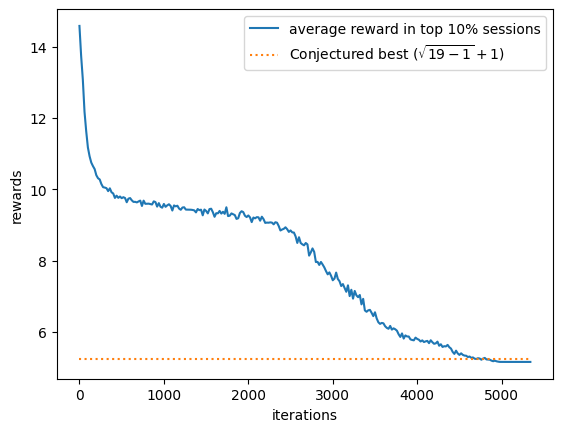}
    \caption{The average value of $\lambda_1+\mu$ of the best 10\% of the sessions in each iteration decreases over time. After 5000 iterations it goes slightly below the conjectured threshold of $\sqrt{19-1}+1$, meaning we have found a counterexample to Conjecture~\ref{conj:aouch}.}
    \label{fig:aouch_evolve}
\end{figure}

\begin{figure}[hbt]
    \centering
   
    \includegraphics[scale=0.2]{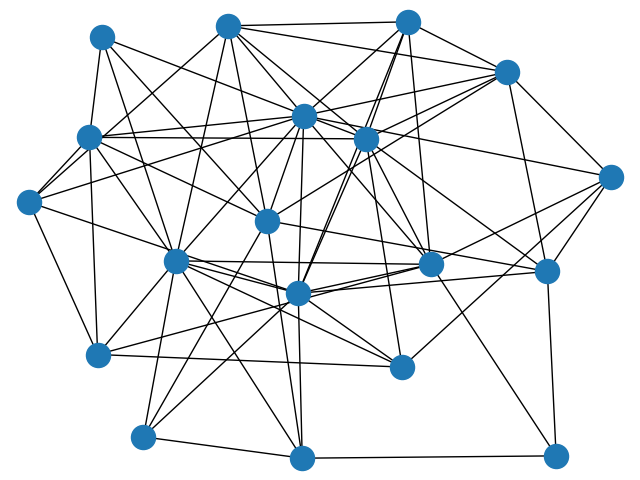}
    \includegraphics[scale=0.2]{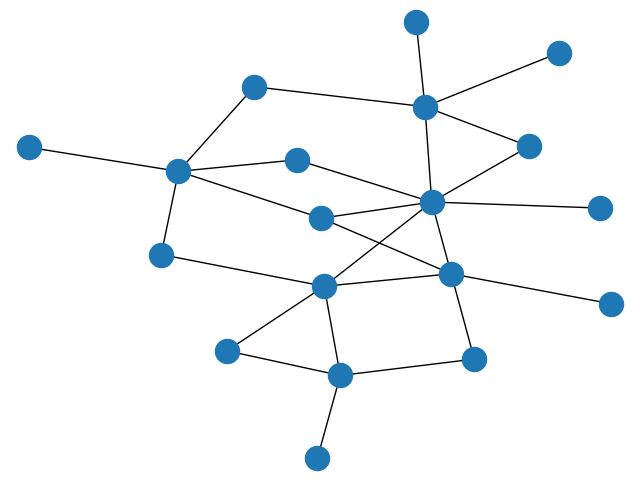}
    \includegraphics[scale=0.2]{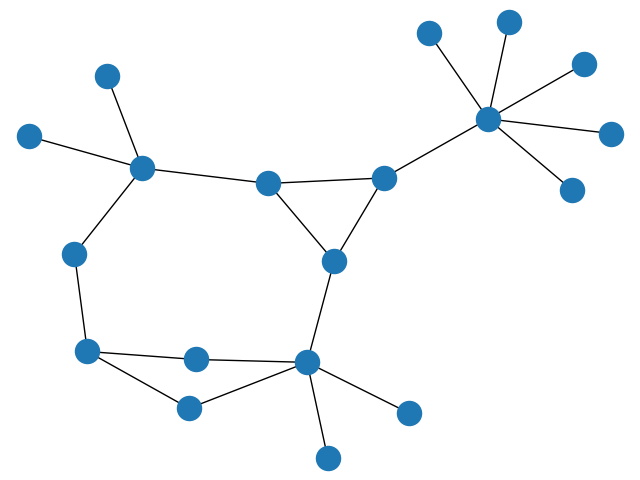}
    \includegraphics[scale=0.2]{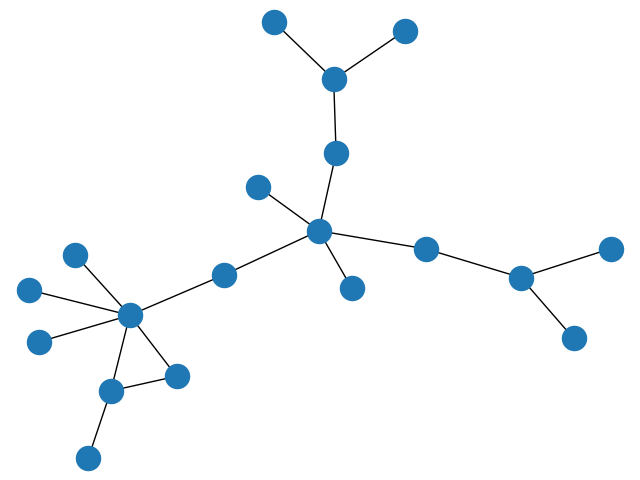}
    \includegraphics[scale=0.2]{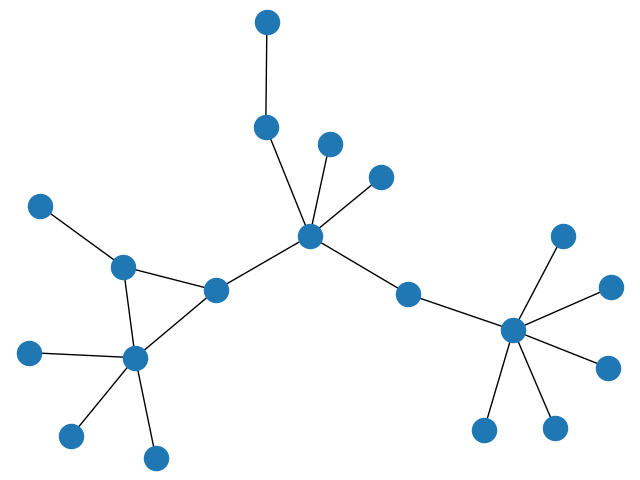}
    
    ~
    
    \includegraphics[scale=0.2]{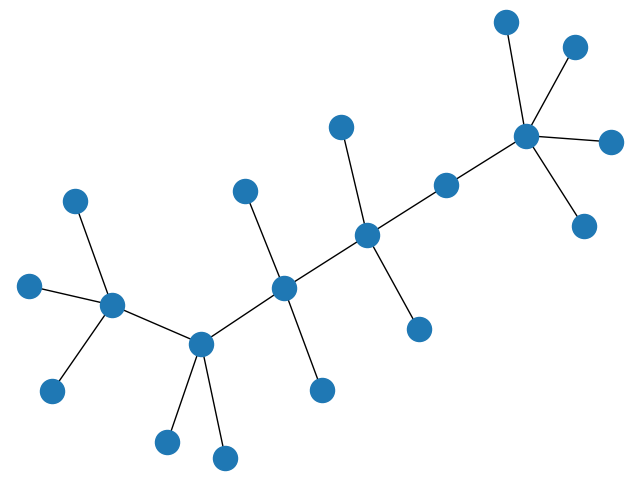}
    \includegraphics[scale=0.2]{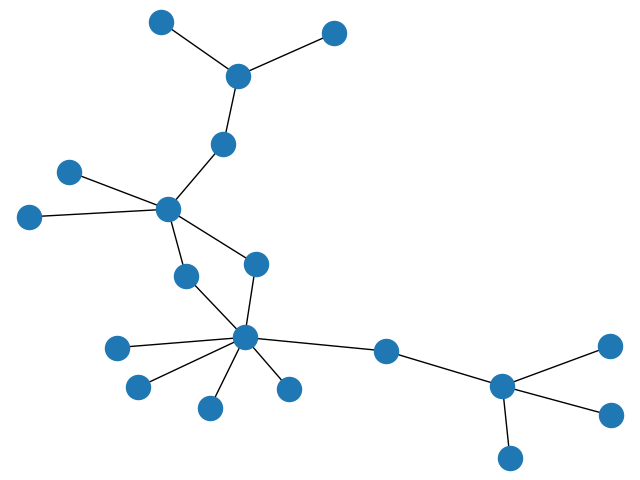}
    \includegraphics[scale=0.2]{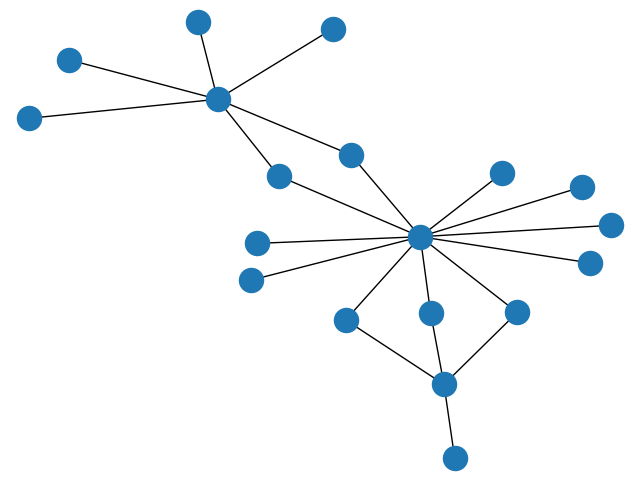}
    \includegraphics[scale=0.2]{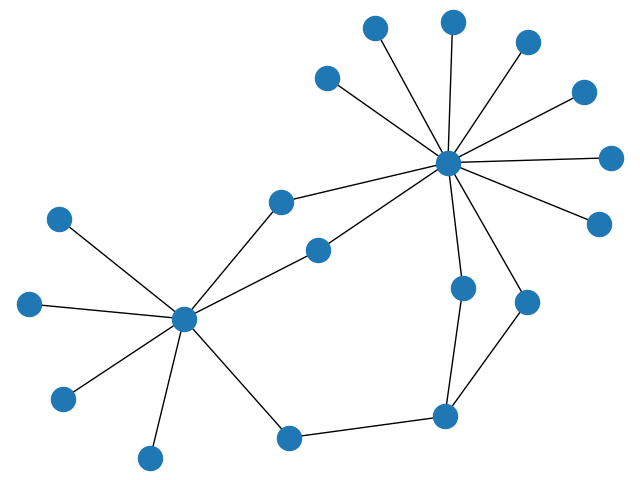}
    \includegraphics[scale=0.2]{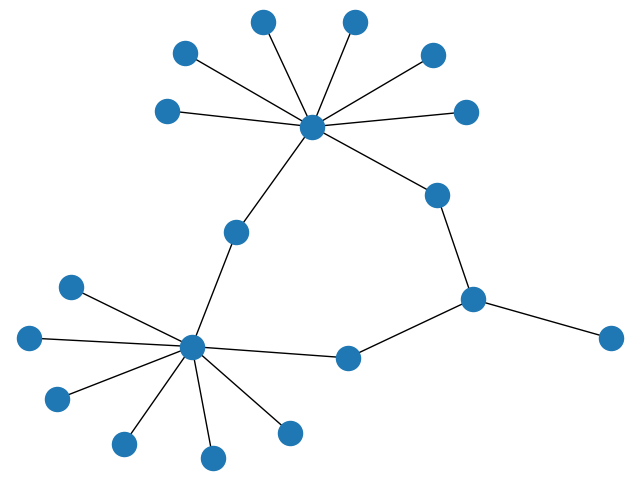}
    
    ~
    
    \includegraphics[scale=0.2]{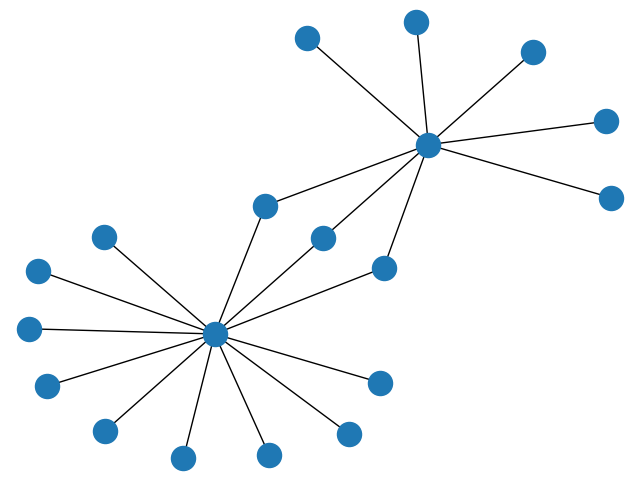}
    \includegraphics[scale=0.2]{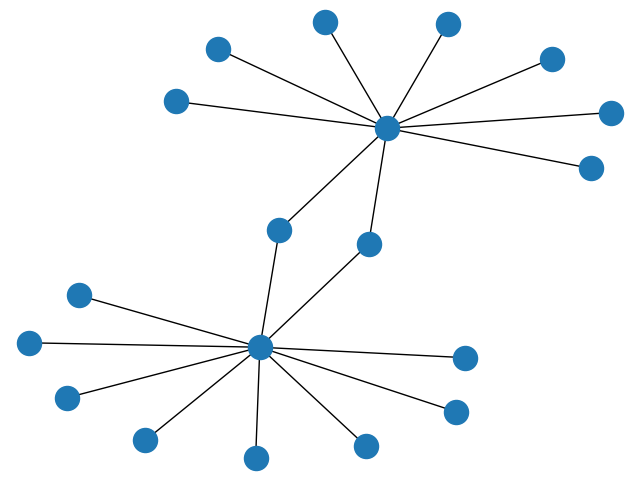}
    \includegraphics[scale=0.2]{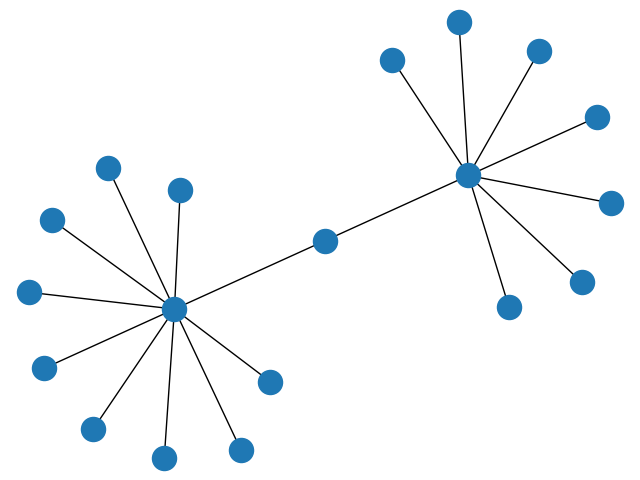}
    \includegraphics[scale=0.2]{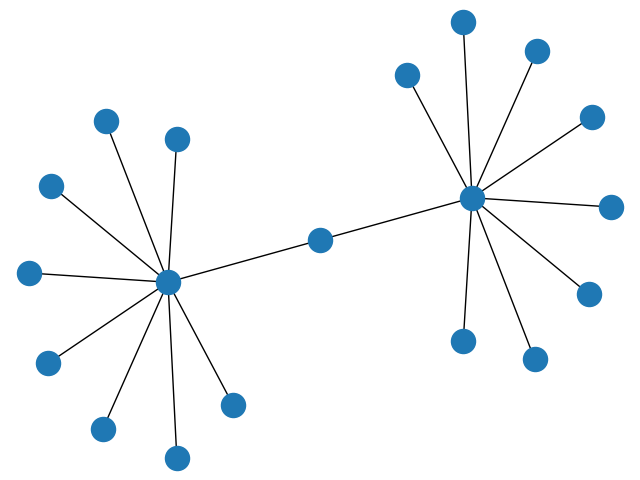}

    \caption{The evolution of the best construction over time. The network quickly realizes that sparse graphs are best, and eventually the ``balanced double star'' structure emerges.}
    \label{fig:aouch_timeline}
\end{figure}

What is remarkable about Conjecture~\ref{conj:aouch} is that it appears to be true for $n\leq 18$ -- the smallest counterexample we could find is for $n=19$, depicted in Figure~\ref{fig:aouch}. It has largest eigenvalue $\sqrt{10}$ and matching number 2, so $\lambda_1+\mu\approx 5.16 < 5.24\approx \sqrt{19-1}+1. $ Figure~\ref{fig:aouch_timeline} shows how the best sessions evolved as the number of iterations increased. While there is a significant run-to-run variation, it typically takes a few hours on an average PC for the program to find the counterexample.

\begin{figure}[hbt]
    \centering
    \includegraphics[scale=0.45]{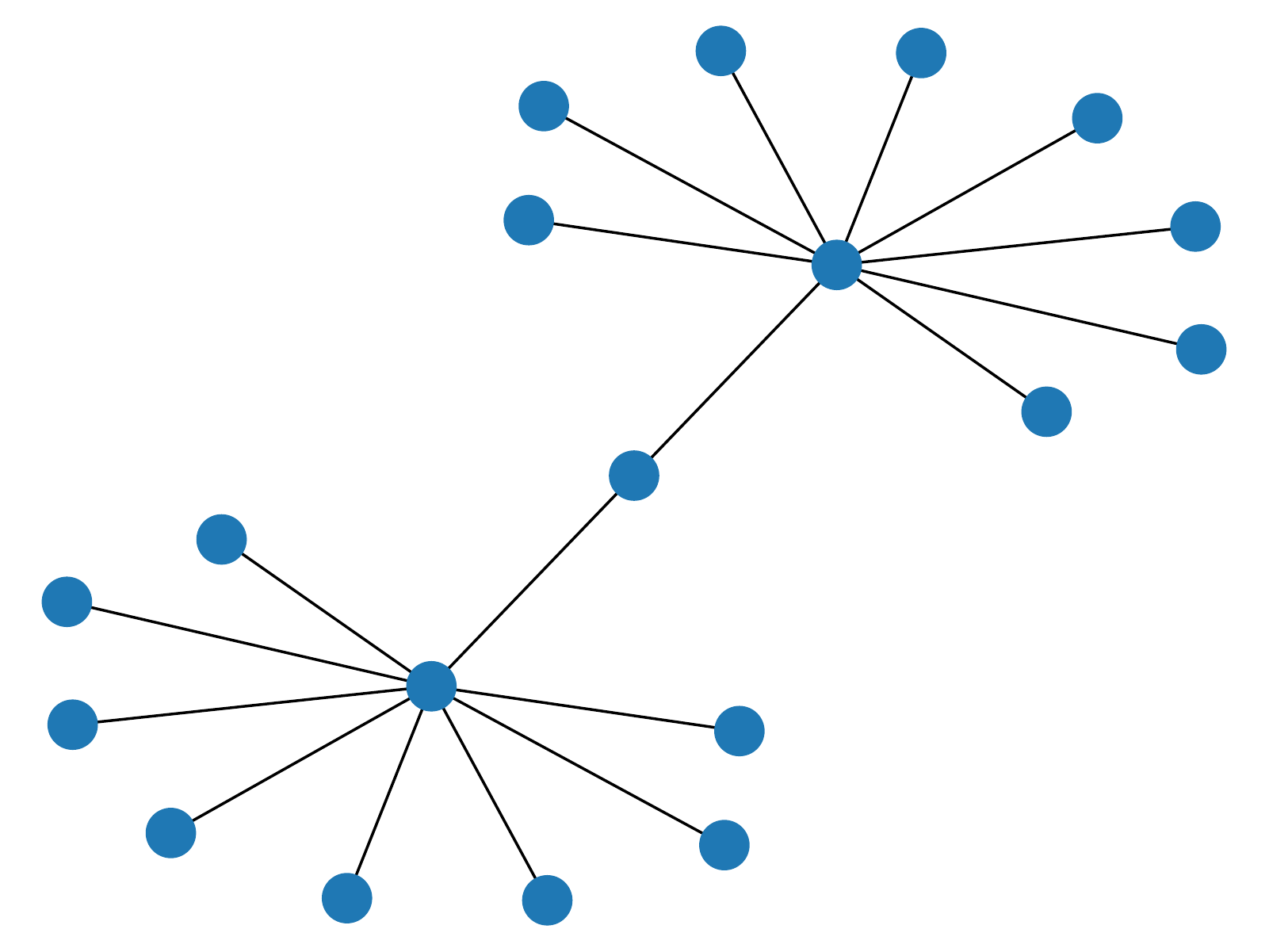}
    \caption{A graph on 19 vertices satisfying $\lambda_1 + \mu < \sqrt{n-1}+1$.}
    \label{fig:aouch}
\end{figure}

It is easy to see that trees are the best counterexample candidates for Conjecture~\ref{conj:aouch}. Indeed, given a graph $G$ with largest matching $M$, we can repeatedly delete edges from $E(G)\setminus M$ without disconnecting the graph. Doing so does not change $\mu(G)$ but decreases the largest eigenvalue. Interestingly, as seen in Figure~\ref{fig:aouch_timeline}, the network also quickly figures out that trees are best, after which it starts decreasing the diameter and converges to the graph in Figure~\ref{fig:aouch}.

\subsection{A conjecture about the proximity and distance eigenvalues of graphs}\label{subsec:aouch2}

 Let $G$ be an $n$-vertex graph. Given an ordering of $V(G)$ as $\{v_1,v_2,\ldots,v_n\}$, the \emph{distance matrix} $D(G)$ of $G$ is the $n\times n$ matrix which has $(i,j)$ entry equal to $d(v_i,v_j)$, where $d(v,w)$ denotes the length of the shortest path between $v$ and $w$ in $G$. The \emph{distance eigenvalues} of $G$ are the eigenvalues $\partial_1\geq \ldots\geq \partial_n$ of $D(G)$. 
 
 The study of the spectral properties of the distance matrix has a long history, reaching back to at least the 70s. Graham and Pollack~\cite{grahampollack} studied the connections between the distance eigenvalues of graphs and a problem in data communication systems and  proved, among others, that the determinant of the distance matrix of a tree only depends on the order of the tree.
 
 \begin{theorem}[Graham-Pollack~\cite{grahampollack}]
 If $T$ is a tree on $n$ vertices with distance matrix $D(T)$, then
 $$\mathrm{det}(D(T)) = (-1)^{n-1}(n-1)2^{n-2}.$$
 \end{theorem}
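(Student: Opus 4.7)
The plan is induction on $n$ via leaf removal, aided by a strengthened induction hypothesis. The base cases $n\le 2$ are immediate: for $n=1$ we have $\det D(T)=0$, and for $n=2$, $\det D(T)=-1$, both matching the formula. For the inductive step, fix $n\ge 3$, pick a leaf $v_1$ of $T$ with unique neighbor $v_2$, and set $T'=T-v_1$. The crucial tree-theoretic fact is that every path from $v_1$ to another vertex $w$ must pass through $v_2$, so $d(v_1,w)=1+d(v_2,w)$ for every $w\neq v_1$. Exploiting this, I would subtract row $v_2$ from row $v_1$ of $D(T)$ and then subtract column $v_2$ from column $v_1$; both operations preserve the determinant and transform $D(T)$ into the bordered block form
$$M=\begin{pmatrix} -2 & \mathbf{1}^T \\ \mathbf{1} & D(T') \end{pmatrix}.$$

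Expanding $\det M$ along its first row and reducing each off-diagonal minor, after a single column swap, to the determinant of $D(T')$ with one column replaced by $\mathbf{1}$, one obtains the recursion
$$\det D(T)=-2\,\det D(T')-\tau\bigl(D(T')\bigr),$$
where $\tau(A):=\sum_{i,j}C_{ij}(A)$ denotes the total cofactor sum of $A$. This is where the main obstacle emerges: the recursion couples $\det D$ with the auxiliary quantity $\tau(D)$, so induction on $\det D$ alone cannot close. The remedy is to strengthen the hypothesis and prove simultaneously that $\tau\bigl(D(T)\bigr)=(-1)^{n-1}2^{n-1}$.

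For the companion recursion on $\tau$, I would introduce the border-augmented matrix $D^{+}(T):=\begin{pmatrix} 0 & \mathbf{1}^T \\ \mathbf{1} & D(T) \end{pmatrix}$; a Laplace expansion along its first row immediately gives $\det D^{+}(T)=-\tau\bigl(D(T)\bigr)$. Applying the same leaf-removal row and column operations to $D^{+}(T)$, and then subtracting the bordered first row from the row corresponding to $v_1$, produces a row whose only nonzero entry is $-2$ sitting in the $v_1$-column; expanding along that row identifies the surviving minor as precisely $D^{+}(T')$, yielding $\det D^{+}(T)=-2\det D^{+}(T')$ and hence $\tau\bigl(D(T)\bigr)=-2\,\tau\bigl(D(T')\bigr)$. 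Substituting the inductive values for $\det D(T')$ and $\tau(D(T'))$ into the two recursions closes the strengthened induction in one step and delivers the claimed identity $\det D(T)=(-1)^{n-1}(n-1)2^{n-2}$.
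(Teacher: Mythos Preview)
Your argument is correct. The leaf-removal reduction to the bordered form
\[
M=\begin{pmatrix} -2 & \mathbf{1}^T \\ \mathbf{1} & D(T') \end{pmatrix}
\]
is valid, the Schur-type identity $\det M = -2\det D(T') - \tau(D(T'))$ is right, and your auxiliary recursion $\tau(D(T)) = -2\,\tau(D(T'))$ via the bordered matrix $D^{+}(T)$ checks out line by line; the strengthened induction then closes cleanly.

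As for comparison: the paper does not give its own proof of this theorem. It is stated purely as background and attributed to Graham and Pollack with a citation, so there is no in-paper argument to compare against. Your proof is essentially the classical one (the same leaf-deletion and cofactor-sum strengthening appear in Graham--Pollack and in standard treatments), so nothing is missing on your end.
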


We will talk more about the spectral properties of the distance matrix of trees in Section~\ref{subsec:peaksfar}.
Motivated by our success in the previous subsection, we will try to disprove a conjecture that is of a very similar flavor to Conjecture~\ref{conj:aouch}. 
Given a connected graph $G$ with $n$ vertices, define its \emph{proximity} $\pi$ as
$$\pi = \frac{1}{n-1}\cdot \min_{v\in V(G)} \sum_{w\in V(G)}d(v,w).$$
Note that $\pi \geq 1$. Merris~\cite{merris1990distance} showed that for every connected graph with diameter $D$ we have $\partial_{\left\lfloor\frac{D}{2}\right\rfloor}>-1$ and hence
$$\pi + \partial_{\left\lfloor\frac{D}{2}\right\rfloor}>0.$$
We will use our methods to refute the following conjecture by Aouchiche and Hansen~\cite{aouchhansen}, that would have been a strengthening of the above result:
\begin{conjecture}[Auchiche--Hansen~\cite{aouchhansen}\label{conj:aouch2}]
Let $G$ be a connected graph on $n\geq 4$ vertices with diameter $D$, proximity $\pi$ and distance spectrum $\partial_1\geq \ldots \geq \partial_n$. Then
$$\pi + \partial_{\left\lfloor \frac{2D}{3} \right\rfloor} > 0.$$
\end{conjecture}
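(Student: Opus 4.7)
The plan is to apply exactly the same deep cross-entropy pipeline described in Section~\ref{subsec:crossentropy} and illustrated in Section~\ref{subsec:matchingeigen}, but with the reward function changed to match Conjecture~\ref{conj:aouch2}. A candidate graph on $n$ vertices is encoded as a binary string of length $\binom{n}{2}$, one bit per potential edge in a fixed ordering, and the neural network is asked to emit these bits one at a time in the manner already described. For each generated graph $G$ we check connectivity via BFS, compute the distance matrix $D(G)$ by Floyd--Warshall, and from $D(G)$ extract the diameter $D$, the proximity $\pi$, and the eigenvalue $\partial_{\lfloor 2D/3 \rfloor}$. Setting $\mathrm{reward}(G) = -\bigl(\pi + \partial_{\lfloor 2D/3 \rfloor}\bigr)$ for connected $G$, and a large negative constant otherwise, turns refuting Conjecture~\ref{conj:aouch2} into a reward-maximization problem: any session ending with positive reward is a counterexample.

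I would sweep $n$ upward starting from $n=4$. Because both $\pi$ and the distance eigenvalues are defined in terms of $D(G)$ and not the adjacency matrix, sparse graphs --- trees in particular --- are again natural candidates: deleting edges outside a spanning tree tends to stretch distances and push the lower distance eigenvalues downward, which is exactly the effect we want. So if the unrestricted search converges slowly, I would restrict the agent to output Pr\"ufer codes, as already suggested in Section~\ref{subsec:crossentropy}, so that it only ever generates trees on $n$ vertices.

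The main obstacle, and the qualitative difference from Conjecture~\ref{conj:aouch}, is the interaction between $D$ and the \emph{index} $\lfloor 2D/3 \rfloor$ at which we read the distance spectrum. Here the index itself jumps as the diameter changes, so the reward landscape is genuinely discontinuous: flipping a single edge can change $D$ by one, shift the index, and therefore swap in a very different eigenvalue. I expect this to cause the cross-entropy method to stall on plateaus, and it may be necessary to keep a handful of top-performing sessions alive across iterations (the modification from~\cite{tds} that the author already flagged). A secondary issue is that $\pi \geq 1$, so we really need $\partial_{\lfloor 2D/3 \rfloor} < -1$; in particular we cannot hope to refute the conjecture for diameters so small that $\lfloor 2D/3 \rfloor \in \{0,1\}$, so the first counterexample will likely have $n$ moderately large and $D$ large enough for the index to land on a strongly negative eigenvalue.

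Once the network produces a candidate $G$, verification is trivial: recompute $D$, $\pi$ and the eigenvalues of $D(G)$ in high-precision arithmetic and check that $\pi + \partial_{\lfloor 2D/3 \rfloor} \leq 0$. As in Section~\ref{subsec:matchingeigen}, the deliverable is a small explicit graph together with its numerical values, after which one can inspect its structure (e.g.\ a double-star or caterpillar-like tree) and try to extend it to an infinite family of counterexamples by hand.
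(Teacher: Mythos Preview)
Your setup is essentially the paper's: same cross-entropy pipeline, only the reward is swapped to $\pi+\partial_{\lfloor 2D/3\rfloor}$. The divergence is in what you expect the network to hand you. You plan to sweep $n$ upward until a session ends with positive reward, then certify that graph. In the paper this never happens: the smallest counterexample the author reports has about $203$ vertices (a path on $13$ vertices with $\geq 190$ pendants attached near the middle), and encoding that as $\binom{203}{2}\approx 2\times 10^4$ binary decisions is far beyond what the method handles in reasonable time. Running at $n=30$ already takes a few days and plateaus at $\pi+\partial_{\lfloor 2D/3\rfloor}\approx 0.4>0$.

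The paper's actual workflow is therefore two-stage and not the one you describe. Fix a tractable $n$ (they use $n=30$), let the network converge not to a counterexample but to a \emph{structural template} --- here, a long path with many pendants (and small cliques) attached near the centre --- then abort the run, read off the template by hand, and scale the parameters until the inequality flips. Your last sentence gestures at extrapolation, but only \emph{after} a small counterexample is in hand; the point is that the extrapolation step is where the first counterexample comes from. A minor related issue: the best $n=30$ graph the network finds is \emph{not} a tree (the star's neighbourhood is partitioned into small cliques), so switching to Pr\"ufer codes too early could hide the signal, even though the eventual scaled-up counterexample is a tree.
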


Our strategy for refuting Conjecture~\ref{conj:aouch2} will be the exact same as with Conjecture~\ref{conj:aouch}. The only change will be the reward function: we change $\lambda_1+\mu$ to $\pi + \partial_{\left\lfloor \frac{2D}{3} \right\rfloor}$. Running the algorithm with $n=30$, after a few days the neural network finds the graph depicted in Figure~\ref{fig:aouch2}.

\begin{figure}[htb]
    \centering
    \includegraphics[scale = 0.5]{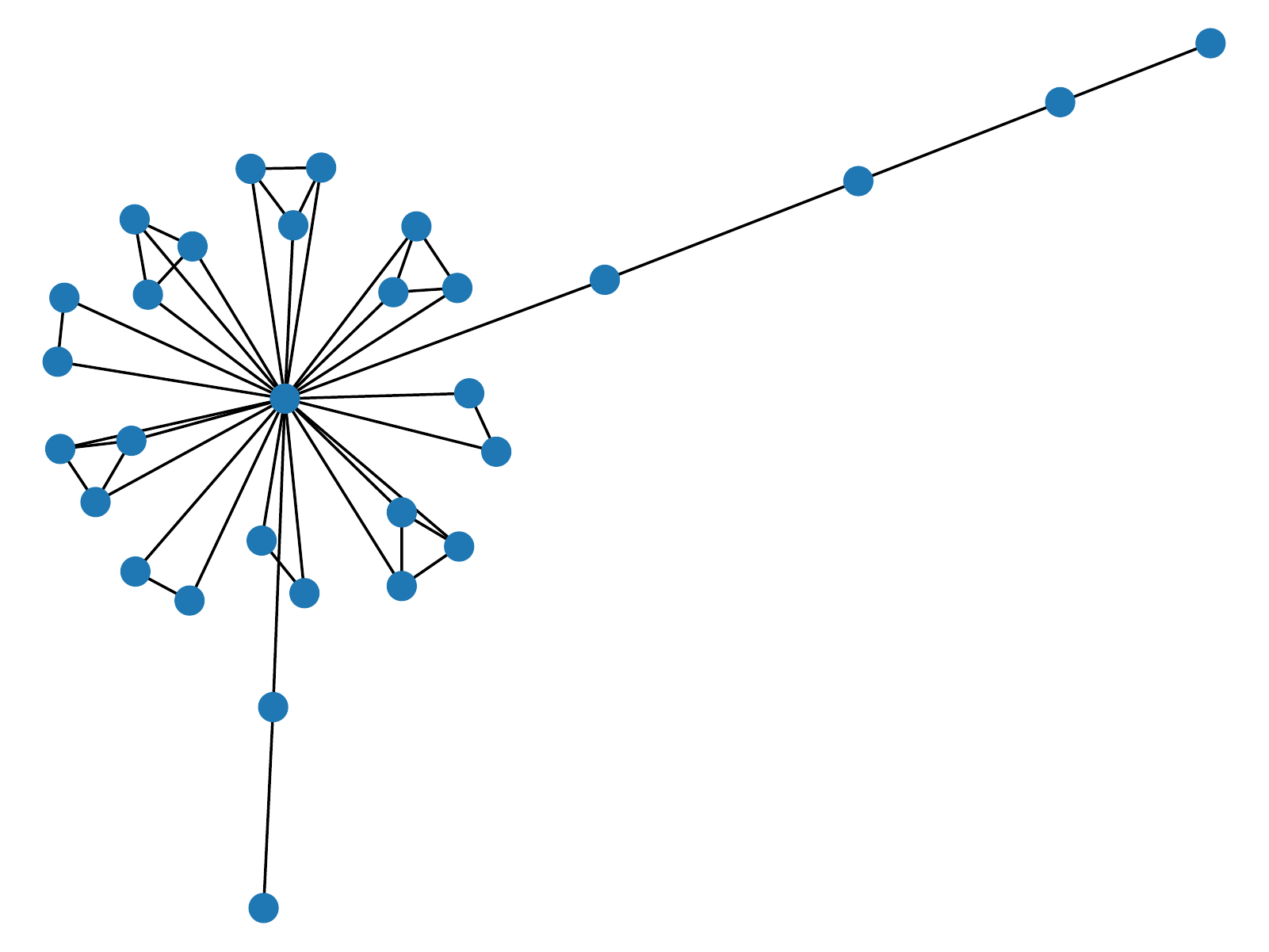}
    \caption{The graph on 30 vertices with smallest value of $\pi + \partial_{\left\lfloor \frac{2D}{3} \right\rfloor}$ found by the network. It has $\pi + \partial_{\left\lfloor \frac{2D}{3} \right\rfloor}\approx 0.4$ so it is not quite a counterexample to Conjecture~\ref{conj:aouch2}, but it tells us very clearly what counterexamples could look like.}
    \label{fig:aouch2}
\end{figure}

The graph in Figure~\ref{fig:aouch2} is likely not optimal for $n=30$: the best graph was still changing when we aborted the algorithm. The reason we chose to terminate the learning process is because we noticed that at this stage, every single graph in the top 10\% of an iteration had essentially the same structure: a long path with a star near its middle, whose neighborhood is partitioned into disjoint cliques. The only thing that varied was the size of these cliques, and having cliques of sizes 2 and 3  for example was ever so slightly better than having some cliques of size 1.

Hence, even though the graph in Figure~\ref{fig:aouch2} is not quite a counterexample to Conjecture~\ref{conj:aouch2}, it gives us a very clear indication on what the structure of graphs minimizing $\pi + \partial_{\left\lfloor \frac{2D}{3} \right\rfloor}$ potentially looks like. Given this information, we can simply increase the number of vertices and vary the clique sizes in this construction until we eventually find a counterexample,  seen in Figure~\ref{fig:aouch2counter}.

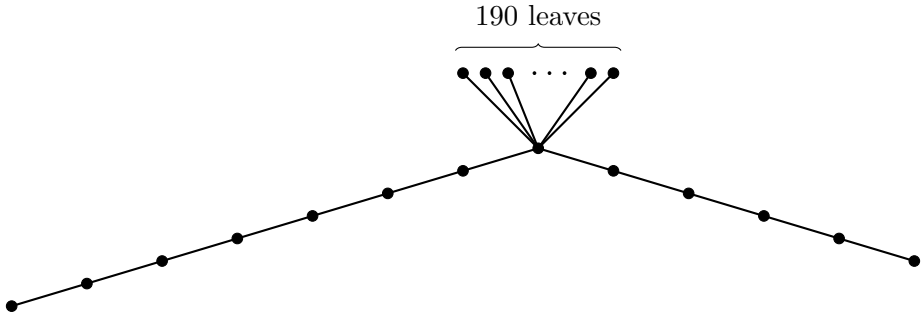
\begin{figure}[htb]
    \centering
    \begin{tikzpicture}
    \draw[ thick] (0,0) -- (1,-0.3);
    
    \draw[thick] (1,-0.3) -- (2,-0.6);
    \draw[fill] (1,-0.3) circle (2pt);
    \draw[thick] (2,-0.6) -- (3,-0.9);
    \draw[fill] (2,-0.6) circle (2pt);
    \draw[thick] (3,-0.9) -- (4,-1.2);
    \draw[fill] (3,-0.9) circle (2pt);
    \draw[thick] (4,-1.2) -- (5,-1.5);
    \draw[fill] (4,-1.2) circle (2pt);
    \draw[fill] (5,-1.5) circle (2pt);
    
    \draw[thick] (0,0) -- (-1,-0.3);
    \draw[fill] (0,0) circle (2pt);
    \draw[thick] (-1,-0.3) -- (-2,-0.6);
    \draw[fill] (-1,-0.3) circle (2pt);
    \draw[thick] (-2,-0.6) -- (-3,-0.9);
    \draw[fill] (-2,-0.6) circle (2pt);
    \draw[thick] (-3,-0.9) -- (-4,-1.2);
    \draw[fill] (-3,-0.9) circle (2pt);
    \draw[thick] (-4,-1.2) -- (-5,-1.5);
    \draw[fill] (-4,-1.2) circle (2pt);
    \draw[thick] (-5,-1.5) -- (-6,-1.8);
    \draw[fill] (-5,-1.5) circle (2pt);
    \draw[thick] (-6,-1.8) -- (-7,-2.1);
    \draw[fill] (-6,-1.8) circle (2pt);
    \draw[fill] (-7,-2.1) circle (2pt);
    
    \draw[thick] (0,0) -- (-1,1);
    \draw[fill] (-1,1) circle (2pt);
    \draw[thick] (0,0) -- (-0.7,1);
    \draw[fill] (-0.7,1) circle (2pt);
    \draw[thick] (0,0) -- (-0.4,1);
    \draw[fill] (-0.4,1) circle (2pt);
    
    \draw[decoration={brace},decorate] (-1.1,1.3) -- node[above=6pt] {190 leaves} (1.1,1.3);
    
    \draw[fill] (-0.05,1) circle (0.5pt);
    \draw[fill] (0.15,1) circle (0.5pt);
    \draw[fill] (0.35,1) circle (0.5pt);
    
    \draw[thick] (0,0) -- (0.7,1);
    \draw[fill] (0.7,1) circle (2pt);
    \draw[thick] (0,0) -- (1,1);
    \draw[fill] (1,1) circle (2pt);

    \end{tikzpicture}
    \caption{A counterexample to Conjecture~\ref{conj:aouch2}.}
    \label{fig:aouch2counter}
\end{figure}

This counterexample is constructed by taking a path on 13 vertices, and attaching $n$ pendant vertices to a vertex adjacent to its midpoint. This graph has diameter 12, and it can be verified by a computer that as soon as $n\geq 190$, this graph satisfies $\pi + \partial_{8} < 0$. We note that this type of graph has already appeared in~\cite{aouchhansen} and~\cite{aouchiche2011proximity}, where they were called \emph{double-tailed comets}, and were shown to minimize the proximity over the class of trees with a given order and diameter, and used as examples to show that $\pi + \partial_{\left\lfloor \frac{2D}{3} \right\rfloor + 1}$ need not always be positive.
From our limited computer experiments it seems plausible that our graph on 203 vertices is in fact (close to being) the smallest counterexample to Conjecture~\ref{conj:aouch2}.

\subsection{The peaks of distance and adjacency polynomials of trees can be far apart}\label{subsec:peaksfar}

Given a tree $T$ on vertex set $\{1,2,\ldots,n\}$, denote its adjacency matrix by $A(T)$. Its distance matrix $D(T)$ has $(i,j)$ entry equal to the graph-theoretic distance of vertices $i$ and $j$. Consider $CPD(T)$, the characteristic polynomial of the distance matrix:

$$CPD(T) = \det (D(T)-x I) = \sum_{k=0}^n \delta_k x^k.$$

In~\cite{aalipour2015proof}, the coefficients $d_k = \frac{2^k}{2^{n-2}}|\delta_k|$ are called the \emph{normalized coefficients} of the characteristic polynomial of the distance matrix of $T$. Graham and Lov\'asz~\cite{grahamlovasz} conjectured that for any tree $T$, the sequence  of normalized coefficients $d_0(T), \ldots , d_{n-2}(T)$ is unimodal and the peak (i.e.~maximum) occurs precisely at index $\lfloor \frac{n}{2}\rfloor$. 

The conjecture regarding the location of the peak was disproved by Collins~\cite{collins} who showed that for paths the peak is at index approximately $\left(1-\frac{1}{\sqrt{5}}\right)n$. In~\cite{aalipour2015proof} the authors confirmed the other part of the Graham-Lov\'asz conjecture by proving that the sequence of normalized coefficients  $d_0(T), \ldots , d_{n-2}(T)$ is indeed always unimodal -- in fact, they proved the stronger statement that this sequence is \emph{log-concave}.

Collins~\cite{collins} considered the characteristic polynomial $CPA(T)$ of the adjacency matrix of trees.  By ignoring the terms with zero coefficients, $CPA(T)$ can be written in the form $CPA(T) = \sum_{i=0}^m a_i x^{\alpha_i}$ where $\alpha_0<\alpha_1<\ldots<\alpha_m$ and $a_i\neq 0$ for all $i$. Collins observed that for the path, $CPA(T)$ has $m=\lfloor \frac{n}{2} \rfloor + 1$ non-zero coefficients whose absolute values form a unimodal sequence with peak at around index $i=\frac{1}{2\sqrt{5}}n $. So the ratio of the position of the peak index to the number of terms in the sequence is also  $1/\sqrt{5}$, just like for the distance matrix!  

Given a tree $T$, denote by $p_A(T)$  the position of the peak index of the absolute values of the non-zero coefficients of $CPA(T)$, and denote by $p_D(T)$  the index of the peak of the  normalized coefficients of $CPD(T)$. Let $m(T)$ denote the number of non-zero coefficients of $CPA(T)$ and $n(T)$ the number of terms in $CPD(T)$ (which is just the number of vertices of $T$ plus one). What the above results say precisely, is that for the path $P_n$ we have $$\lim_{n\rightarrow\infty}\frac{p_D(P_n)}{n(P_n)} = \lim_{n\rightarrow\infty}\left(1 - \frac{p_T(P_n)}{m(P_n)} \right)= 1-\frac{1}{\sqrt{5}}.$$
That is, the two peaks in the case of the path are in the ``same'' place (when counting the location of the peak from opposite ends of the two sequences of coefficients). Motivated by this, and that this equality holds for the star as well, Collins~\cite{collins} made the following conjecture:
\begin{conjecture}[Collins~\cite{collins}]
The sequence of absolute values of the non-zero coefficients of $CPA(T)$ form a unimodal sequence, and its peak is at the same place as the peak of the normalized coefficients of $CPD(T)$.
\end{conjecture}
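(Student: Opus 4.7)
The plan is to treat this as another instance of the framework from \S\ref{subsec:matchingeigen} and \S\ref{subsec:aouch2}, and search for a tree $T$ that refutes Collins' conjecture. A counterexample to either half suffices (non-unimodality of the absolute values of the nonzero coefficients of $CPA(T)$, or misalignment of the two peaks), but I expect the alignment statement to be by far the more fragile one—unimodality has resisted disproof even for the stronger log-concave version in~\cite{aalipour2015proof}—so the reward will be centred on alignment while still monitoring unimodality on each candidate.

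For the encoding, a tree on $n$ labelled vertices is naturally represented by its Pr\"ufer code, a word of length $n-2$ over the alphabet $\{1,\ldots,n\}$. This fits the letter-by-letter generation scheme of \S\ref{subsec:crossentropy} exactly and guarantees that every session produces a valid tree, so no computation is wasted on disconnected or cyclic graphs. In each session the agent emits a Pr\"ufer word, we decode to $T$, form $A(T)$ and $D(T)$, and compute $CPA(T)$ and $CPD(T)$ by direct linear algebra. From $CPA(T)$ we extract the subsequence of nonzero coefficients and its peak index $p_A(T)$; from $CPD(T)$ we compute the normalized coefficients and their peak index $p_D(T)$. Because the conjecture compares the two peaks from opposite ends (the ratios that coincide in the path case are $1-1/\sqrt 5$ for distance and $1/\sqrt 5$ for adjacency), the natural measure of violation is
\[
\Delta(T) \;=\; \left(1-\frac{p_A(T)}{m(T)}\right) - \frac{p_D(T)}{n(T)},
\]
and I will use $|\Delta(T)|$ as the reward, with a large additive bonus whenever the nonzero-coefficient sequence of $CPA(T)$ fails to be unimodal (so the agent can also stumble onto that flavour of counterexample). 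The cross-entropy loop of \S\ref{subsec:crossentropy} is then deployed unchanged: generate $N$ Pr\"ufer words, keep the top $y$ percentile, fit the network on their (state, action) pairs, and iterate until some $T$ realizes $\Delta(T)\neq 0$.

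The main obstacle I anticipate is that the Pr\"ufer encoding is highly non-local—changing a single letter can move a vertex across the tree—so the predictive distribution the network has to learn is less compositional than in the edge-by-edge graph setting of \S\ref{subsec:matchingeigen}. To mitigate this I would start with modest $n$ (say $n\le 20$), train until the top sessions concentrate on a recognisable structural family, and then scale $n$ up while seeding the next run with the previous policy. A secondary obstacle is that $p_A$ and $p_D$ are integer argmax statistics, so the reward landscape is piecewise constant and offers poor gradient signal; I will therefore consider a soft version of the reward that replaces each $\mathrm{argmax}$ by a center-of-mass of the coefficient sequence weighted by (say) the fourth power of the absolute values, so that small perturbations of the tree produce small changes in the score.

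Once the algorithm converges on a consistent qualitative shape—by analogy with the double-tailed comet of \S\ref{subsec:aouch2}, I would guess a caterpillar or a path-with-an-attached-broom—I will read off an explicit parametric family $\{T_n\}$ from the winning examples and verify the refutation rigorously. Concretely, I expect to be able to compute, either in closed form via a transfer-matrix/recurrence argument for $CPA$ and $CPD$ along the spine, or by a short symbolic computation for each $n$ in the family, that $p_A(T_n)/m(T_n)$ and $1-p_D(T_n)/n(T_n)$ tend to different limits, so that the two peaks in the renormalized sense drift apart as $n\to\infty$. This last verification step, converting the neural network's empirical tree into a provable infinite family, is the place where I expect to spend the most genuine mathematical effort.
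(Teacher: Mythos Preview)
Your plan is essentially the paper's own approach: Pr\"ufer encoding, cross-entropy search, the same reward $\bigl|\,p_A(T)/m(T) - (1 - p_D(T)/n(T))\,\bigr|$ (your $|\Delta(T)|$ is the same quantity), and extraction of a parametric family from the converged shape. Your guess of a caterpillar is on the mark: the paper's family $T_{n,d}$ is a path on $d$ vertices with $(n-d)/d$ leaves attached to each spine vertex.

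The one substantive difference is in the rigorous verification. You anticipate a transfer-matrix/recurrence computation for both $CPA$ and $CPD$; the paper avoids this entirely. For $CPA$ it uses the standard fact that $|a_{n-2k}|$ equals the number of $k$-matchings, then bounds these crudely by $\binom{d}{k}\bigl(\tfrac{n-d}{d}\bigr)^k \le |a_{n-2k}| \le \binom{d}{k}\bigl(\tfrac{n-d}{d}+2\bigr)^k$, which for $n\ge d^2$ already forces the sequence to be monotone and hence $p_A/m = 1/(d{+}1)$. For $CPD$ the paper does not compute anything: it simply quotes the known bound $p_D(T)\le\lceil\tfrac{2}{3}n(T)\rceil$ from~\cite{aalipour2015proof}. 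Combining these gives $f(T_{d^2,d})\ge 1 - \tfrac{1}{d+1} - \tfrac{2}{3} - \tfrac{1}{d^2+1}\ge 0.3$ for $d\ge 31$, with no spectral or recurrence calculation needed. Your route would also work, but is more labour than necessary; keeping the matching interpretation and the existing $p_D$ bound in mind would shorten your last step considerably.
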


  We shall only focus on the conjecture regarding the location of the peaks, and disprove it in a strong sense. We represent trees using their Pr\"ufer codes as words of length $n-2$ from an alphabet of size $n$. By optimizing the reward function $f(T) =\bigg | \frac{p_A(T)}{m(T)} - \left(1-\frac{p_D(T)}{n(T)}\right) \bigg |$ using the cross-entropy method, we  find that the tree on $48$ vertices in Figure~\ref{fig:collins} has $\frac{p_D(T)}{n(T)}=\frac{1}{2}$ and $\frac{p_A(T)}{m(T)}=\frac{1}{7}$.  

\begin{figure}[hbt]
    \centering
    \includegraphics[scale=0.5]{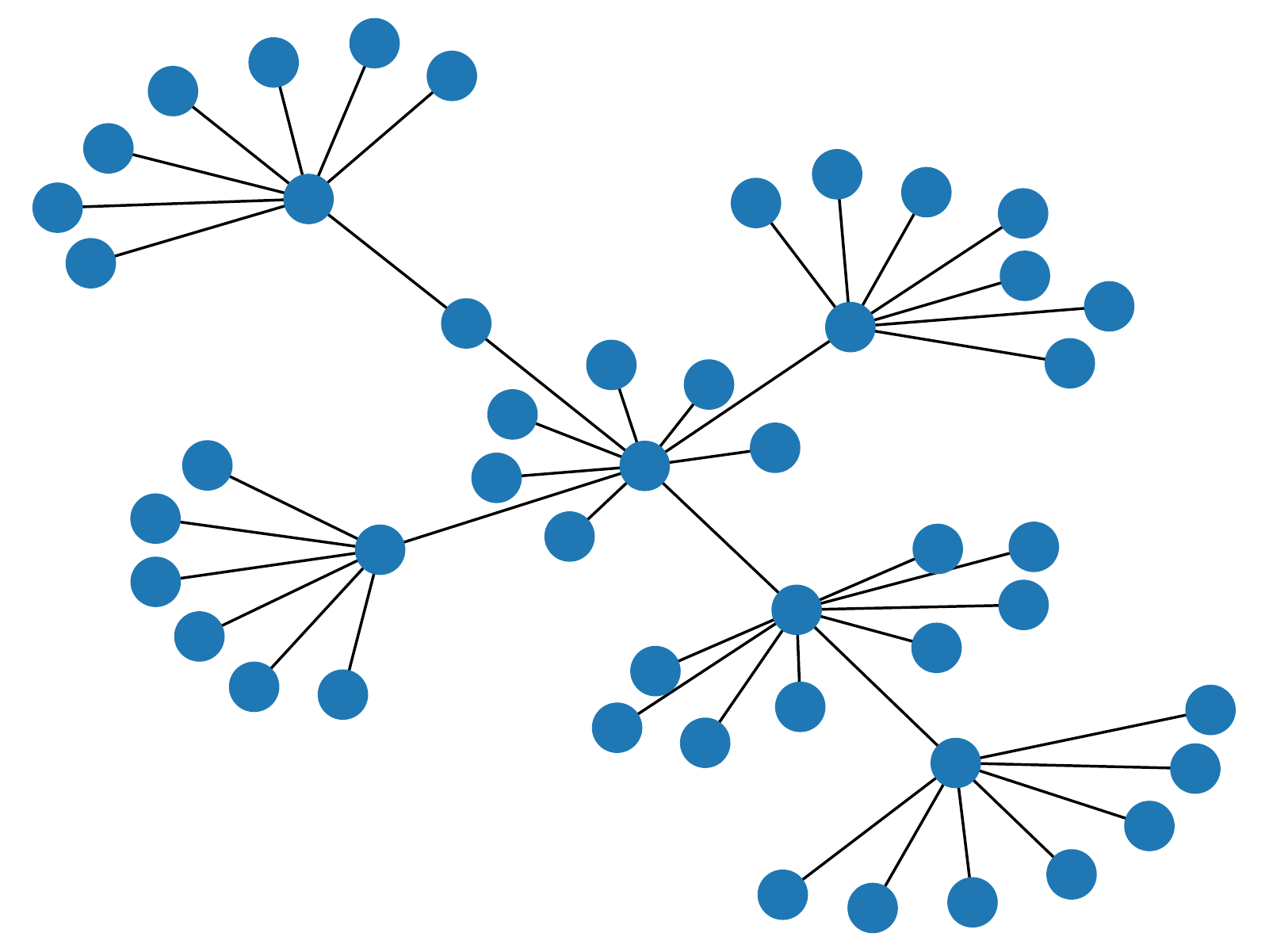}
    \caption{The tree $T$ on $48$ vertices found by the network, where the two peaks of the polynomials are furthest apart. It has $\frac{p_A(T)}{m(T)}=\frac{1}{7}$ and $\frac{p_D(T)}{n(T)}=\frac{1}{2}$. Observe that it has six high degree vertices, and every edge in the tree is adjacent to at least one of these six vertices.}
    \label{fig:collins}
\end{figure}

Once we know what the extremal examples approximately look like, the following theorem is much easier to prove. It refutes Collins's conjecture in a strong sense.
\begin{theorem}\label{thm:collins}
There exists an infinite sequence of trees $(T_k)_{k\in\mathbb{N}}$ satisfying $$\lim_{k\rightarrow\infty}n(T_k) = \lim_{k\rightarrow\infty}m(T_k) = \infty,$$ such that $f(T_k)\geq 0.3$ for all $k\in\mathbb{N}$.
\end{theorem}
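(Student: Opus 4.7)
The plan is to generalize the tree of Figure~\ref{fig:collins}: for each $k \ge 1$, let $T_k$ be the caterpillar obtained from a path $v_1 v_2 \cdots v_k$ by attaching $\ell = 7$ pendant leaves to each $v_i$. Then $n(T_k) = 8k$, and since every maximum matching pairs each hub $v_i$ with a distinct leaf, $\mu(T_k) = k$ and $m(T_k) = k+1$. Both $n(T_k)$ and $m(T_k)$ tend to infinity with $k$, so it will suffice to show $f(T_k) \to 0.36\ldots > 0.3$ (the theorem is then proved by starting the sequence at a sufficiently large index).

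\textbf{The adjacency side.} Using the classical identification of $CPA(T_k)$ with the matching polynomial, the non-zero coefficients of $CPA(T_k)$ are the matching counts $m_j(T_k)$. Edge-deletion on $v_{k-1} v_k$ yields the linear recurrence
$$f_k(x) = (1+\ell x) f_{k-1}(x) + x f_{k-2}(x),\qquad f_0 = 1,\ f_1 = 1 + \ell x,$$
for the matching generating function $f_k(x) = \sum_j m_j(T_k) x^j$, with dominant characteristic root $\rho(x) = \tfrac12\bigl((1+\ell x) + \sqrt{(1+\ell x)^2 + 4x}\bigr)$. By Heilmann--Lieb the sequence $(m_j(T_k))_j$ is log-concave, so a standard saddle-point argument places its mode $j^*_k$ at $j^*_k/k \to \rho'(1)/\rho(1)$, which for $\ell = 7$ evaluates to $0.864\ldots$. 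Since $p_A = \mu - j^*$ and $m = \mu + 1$, this yields $p_A(T_k)/m(T_k) \to 1 - \rho'(1)/\rho(1) \approx 0.136$.

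\textbf{The distance side.} The plan is to exploit the $(S_\ell)^k$-symmetry of $T_k$: permuting leaves independently within each orbit block-diagonalizes $D(T_k)$ into the eigenvalue $-2$ of multiplicity $k(\ell-1)$ on the antisymmetric subspace, plus a $2k \times 2k$ symmetric matrix $B_k$ on the symmetric subspace. Hence
$$\det(xI - D(T_k)) = (x+2)^{k(\ell-1)} Q_k(x),$$
with $\deg Q_k = 2k$. Under the Graham--Lov\'asz normalization $d_j = 2^j |\delta_j|/2^{n-2}$, the factor $(x+2)^{k(\ell-1)}$ contributes (after absorbing $2^j$) the symmetric binomial sequence $\bigl(\binom{k(\ell-1)}{j}\bigr)$ peaked at $k(\ell-1)/2$, and the peak of the full sequence is the convolution of this with the corresponding normalized coefficient sequence of $Q_k(2x)$ on $[0, 2k]$. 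A direct asymptotic analysis of $B_k$, which is essentially tridiagonal thanks to the path backbone, shows that the $Q_k$-factor shifts the peak by $k + o(k)$; hence $p_D(T_k) = k(\ell-1)/2 + k + o(k)$ and therefore $p_D(T_k)/n(T_k) \to 1/2$, matching the ratio observed for the $48$-vertex tree in Figure~\ref{fig:collins}.

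\textbf{Conclusion and main obstacle.} Combining the two asymptotics gives $f(T_k) \to |0.136 - (1 - 0.5)| \approx 0.364 > 0.3$, which proves the theorem. The genuine difficulty lies in the distance-side analysis: the distance polynomial admits no clean edge-deletion recurrence, so pinning down the $k + o(k)$ shift requires either an explicit handling of the eigenstructure of $B_k$, or else an indirect route via the log-concavity theorem of~\cite{aalipour2015proof}, whereby the mode is determined by the mean and variance of the normalized coefficient sequence---quantities that can be read off from $\mathrm{tr}\, D(T_k)$ and $\mathrm{tr}\, D(T_k)^2$ using the explicit block decomposition above. Everything else is routine once the neural-network output has revealed the correct family to consider.
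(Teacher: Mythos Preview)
Your proposal has a genuine gap on the distance side, and this gap is precisely the obstacle the paper's proof is designed to avoid.

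On the adjacency side your argument can be made rigorous: the matching polynomial has only real roots by Heilmann--Lieb, so Darroch's rule locates the mode within $1$ of $f_k'(1)/f_k(1)\sim k\,\rho'(1)/\rho(1)$, and your numerics are correct. But on the distance side, your argument is only a sketch. The reduced $2k\times 2k$ block $B_k$ is \emph{not} ``essentially tridiagonal'': the distance between hub $v_i$ and hub $v_j$ is $|i-j|$, so $B_k$ is a dense Toeplitz-type matrix, and the characteristic polynomial $Q_k$ does not obey any simple three-term recurrence. Your fallback suggestion---locate the mode of the normalized coefficients via $\mathrm{tr}\,D$ and $\mathrm{tr}\,D^2$---does not work either: log-concavity alone does not force the mode to sit near the mean, and in any case you have not computed those traces or shown the variance is $o(n^2)$. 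The paper itself states explicitly that for these caterpillars ``we do not know how to prove'' that $p_D(T)/n(T)\approx 1/2$, so this is not a routine calculation you can wave away.

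The paper's proof sidesteps this entirely by working in the \emph{opposite} parameter regime. Instead of fixing $\ell=7$ and letting the path length $k\to\infty$, it takes $T_{d^2,d}$ (path of length $d$, with $d-1$ leaves per hub) and sends $d\to\infty$. In that regime an elementary counting argument (Proposition~\ref{prop:peakbounds}) shows the matching numbers $N_0<N_1<\cdots<N_d$ are \emph{monotone}, so $p_A/m=1/(d+1)$ exactly---no saddle-point needed. Then for the distance side one needs nothing more than the crude bound $p_D(T)\le\lceil\tfrac{2}{3}n(T)\rceil$ from~\cite{aalipour2015proof}, which already gives
\[
f(T_{d^2,d})\ \ge\ 1-\tfrac{1}{d+1}-\tfrac{2}{3}-\tfrac{1}{d^2+1}\ \ge\ 0.3\qquad(d\ge 31).
\]
The point is that by pushing $p_A/m$ all the way down to $1/(d+1)\to 0$, one only needs $1-p_D/n\ge 1/3-o(1)$, which is a theorem, rather than $1-p_D/n\to 1/2$, which is not.
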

Theorem~\ref{thm:collins} shows that even if we assume that the two sequences have lots of non-zero entries, the two peaks can be very far apart. This avoids issues about when $m(T)$ is small, and it is somewhat ambiguous what one means with the ratio of the peak index. Note that for example when $T$ is a star, then $CPA(T)$ only has two non-zero entries~\cite{collins}.

Let $n,d\geq 3$ be integers such that $d|n$. Let $T_{n,d}$ be obtained by first taking a path on $d$ vertices, and attaching $\frac{n-d}{d}$ leaves to each vertex. We show that if $d$ is an arbitrary  constant and $n$ is sufficiently large compared to $d$, then the non-zero coefficients of $CPA(T_{n,d})$ form a monotone sequence, so their peak is as far from the halfway point as possible.

\begin{proposition}\label{prop:peakbounds}
If $n\geq d^2$ then $\frac{p_A(T_{n,d})}{m(T_{n,d})}=\frac{1}{d+1}$.
\end{proposition}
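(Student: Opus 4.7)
The plan is to translate the claim into a statement about the matching polynomial. Since $T_{n,d}$ is a tree, the characteristic polynomial of its adjacency matrix coincides (up to signs) with the matching polynomial:
\[
CPA(T_{n,d})(x) = \sum_{k=0}^{\nu} (-1)^k m_k\, x^{n-2k},
\]
where $m_k$ is the number of $k$-matchings of $T_{n,d}$ and $\nu$ denotes its matching number. First I would observe that $\nu(T_{n,d}) = d$: pairing each of the $d$ backbone-path vertices with a distinct leaf yields a matching of size $d$, while every edge of $T_{n,d}$ is incident to a backbone vertex, so no matching can be larger. Hence $m(T_{n,d}) = d+1$, and in the non-zero-coefficient sequence of $CPA(T_{n,d})$ (ordered by increasing power of $x$) we have $|a_i| = m_{d-i}$ for $i = 0,1,\ldots,d$.

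Next, setting $\ell := (n-d)/d$, I would classify $k$-matchings by the number $j$ of backbone-path edges they use: there are $\binom{d-j}{j}$ such $j$-matchings in $P_d$, and the remaining $k-j$ matching edges are placed as leaf edges at $k-j$ of the $d-2j$ uncovered backbone vertices, each with $\ell$ choices of leaf. This produces the closed form
\[
m_k(T_{n,d}) = \sum_{j=0}^{\min(k,d-k)} \binom{d-j}{j}\binom{d-2j}{k-j}\ell^{k-j}.
\]
The core of the proof is then to show that under the hypothesis on $n$, the sequence $(m_k)_{k=0}^d$ is strictly increasing, so that $|a_i|$ is strictly decreasing and its peak sits at the very first position. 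The dominant term in $\ell$ of $m_k$ is the $j=0$ contribution $\binom{d}{k}\ell^k$, and the ratio of consecutive leading terms $(d-k)\ell/(k+1)$ already exceeds $1$ whenever $\ell$ is comparable to $d$, with the tightest requirement coming from the final transition $k=d-1 \to k=d$. For $k<d-1$ the inequality $m_{k+1}>m_k$ is comfortable, and the lower-order corrections (from $j\geq 1$) can be absorbed using crude bounds such as $\binom{d-j}{j}\binom{d-2j}{k-j} \leq 4^d$, which are dwarfed by a single additional factor of $\ell$ once $\ell$ is at least a small multiple of $d$.

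The main obstacle is the last transition $m_d > m_{d-1}$, which after accounting for the $j=1$ subleading term $(d-1)\ell^{d-2}$ in $m_{d-1}$ reduces to the elementary quadratic inequality $\ell^2 > d\ell + (d-1)$; verifying this carefully from the hypothesis on $n$ is where essentially all the delicate bookkeeping lies, since here the leading ratio $\ell/d$ is barely above $1$. Once monotonicity of $(m_k)$ is established, $p_A(T_{n,d}) = 1$ (counting from the low-degree end of $CPA$) and hence $p_A(T_{n,d})/m(T_{n,d}) = 1/(d+1)$ follows at once.
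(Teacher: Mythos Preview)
Your approach is more refined than the paper's: you derive the exact formula
\[
m_k=\sum_{j}\binom{d-j}{j}\binom{d-2j}{k-j}\ell^{\,k-j},\qquad \ell=\frac{n-d}{d},
\]
whereas the paper only records the sandwich $\binom{d}{k}\ell^k\le m_k\le\binom{d}{k}(\ell+2)^k$ and then asserts, from these bounds alone, that the coefficient sequence is strictly monotone once $n\ge d^2-d+1$. You correctly isolate the genuinely delicate step, $m_d>m_{d-1}$, and correctly reduce it (using $m_d=\ell^{d}$ and $m_{d-1}=d\ell^{d-1}+(d-1)\ell^{d-2}$) to the quadratic inequality $\ell^{2}>d\ell+(d-1)$.

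The gap is that this inequality is \emph{not} implied by $n\ge d^{2}$. With $n=d^{2}$ one has $\ell=d-1$, and then
\[
\ell^{2}-d\ell-(d-1)=(d-1)\bigl((d-1)-d-1\bigr)=-2(d-1)<0.
\]
Your own remark that ``the leading ratio $\ell/d$ is barely above $1$'' already signals the problem: under the stated hypothesis this ratio equals $(d-1)/d<1$. Concretely, for $d=3$, $n=9$ one computes $m_0,m_1,m_2,m_3=1,8,16,8$, so the sequence $|a_i|=8,16,8,1$ peaks at the second position and $p_A/m=2/4\ne 1/4$. Hence the proposition as stated is actually false, and neither your argument nor the paper's cruder bounds can establish it; both really prove the conclusion only under the stronger hypothesis $\ell\ge d+1$, i.e.\ $n\ge d(d+2)$. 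This is harmless for the downstream theorem (which only needs $p_A/m\to 0$), but as a proof of the proposition your final step does not go through.
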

\begin{proof}
    Recall the fact (see e.g~\cite{cvetkovic1980spectra}) that we can describe the characteristic polynomial of the adjacency matrix of a tree in terms of the number of different-sized matchings in the tree:
    \begin{equation*}
        a_{n-k}=\begin{cases} (-1)^{n+k/2} \cdot N_{\frac{1}{2}k}(T)&\mbox{if } k \text{ even,} \\
0 & \mbox{otherwise.} \end{cases} 
    \end{equation*}
    Here $N_{i}(T)$ denotes the number of matchings of size $i$ in $T$. Observe that the largest matching in $T_{n,d}$ has size $d$, and so $CPA(T_{n,d})$ has $d+1$ non-zero coefficients, one of them corresponding to the empty matching. For every $0\leq k \leq d$ we have the following easy bounds on the coefficients:
    $$\binom{d}{k}\left(\frac{n-d}{d}\right)^k\leq |a_{n-2k}|\leq  \binom{d}{k} \left(\frac{n-d}{d} + 2\right)^k.$$
    The lower bound comes from only counting those $k$-matchings where every edge of the matching meets the path we used when constructing $T_{n,d}$ in precisely one of its $d$ vertices. For the upper bound, observe that every edge in the tree is adjacent to the path on $d$ vertices used in the construction, and every vertex on this path has at most $\frac{n-d}{d} + 2$ edges.
    
    It follows that if $n\geq d^2-d+1$ then the sequence of absolute values of coefficients is strictly decreasing. Thus the peak is at very first index, i.e.~$a_{n-2d}$, and we have $\frac{p_A(T_{n,d})}{m(T_{n,d})}=\frac{1}{d+1}$.
\end{proof}

It remains to bound the peak of the distance coefficients. While the coefficients of the adjacency polynomial only depend on the number of matchings of certain sizes in the tree, calculating the coefficients of the distance polynomial is significantly more involved (see~\cite{grahamlovasz}). It seems that when $n$ is large enough compared to $d$, we always have $\frac{p_D(T_{n,d})}{n(T_{n,d})} = \frac{\lfloor n/2 \rfloor}{n}\approx \frac{1}{2}$. We do not know how to prove this. Instead we will use a result from~\cite{aalipour2015proof}, which states that $p_D(T)\leq \lceil \frac{2}{3}n(T)\rceil$ for every tree $T$.

\begin{proof}[Proof of Theorem~\ref{thm:collins}]
Consider the sequence of trees $T_{d^2,d}$ with $d=31,32, \ldots$. By Proposition~\ref{prop:peakbounds}, for each of these trees we have $\frac{p_A(T_{d^2,d})}{m(T_{d^2,d})}=\frac{1}{d+1}$. By the sentence preceding this proof, we have $\frac{p_D(T_{d^2,d})}{n(T_{d^2,d})}\leq \frac{2}{3} + \frac{1}{n(T_{d^2,d})}$. As $n(T_{d^2,d})=d^2+1$ and as seen in the proof of Proposition~\ref{prop:peakbounds} we also have $m(T_{d^2,d}) = d+1$, both $n(T_{d^2,d})$ and $m(T_{d^2,d})$ go to infinity as $d$ increases, as required. The distance between the peaks satisfies
$$f\left(T_{d^2,d}\right) \geq 1 - \frac{1}{d+1} - \frac{2}{3} - \frac{1}{d^2+1} \geq 0.3$$
for all $d\geq 31.$
\end{proof}

Note that it is not necessarily the case that the trees $T_{n,d}$ maximize the distance of the peaks, for a given number of vertices. One counterexample to this is at $n=30$. The tree in Figure~\ref{fig:tree30} has $f(T)=\frac{1}{3}$, which is strictly larger than $f(T_{30,d})$ for any $d$. The reason is that the tree on the left has more 5-matchings (4705) than 4-matchings (4601), whereas this is not the case for e.g.~$T_{30,5}$ (where the corresponding numbers are 3125 and 3625).

\begin{figure}[hbt]
    \centering
    \includegraphics[scale=0.4]{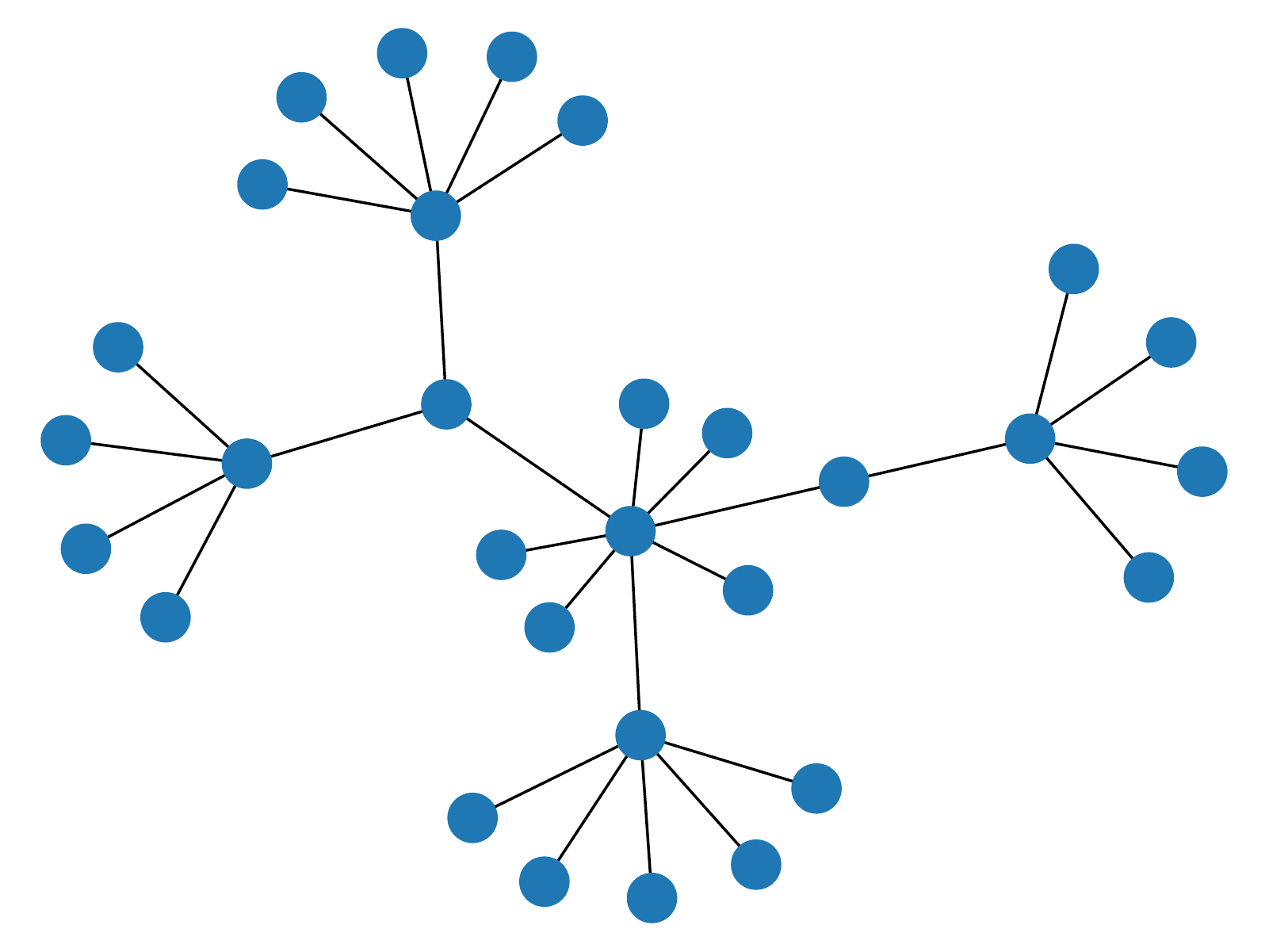}
    \hspace{0.5in}
    \includegraphics[scale=0.4]{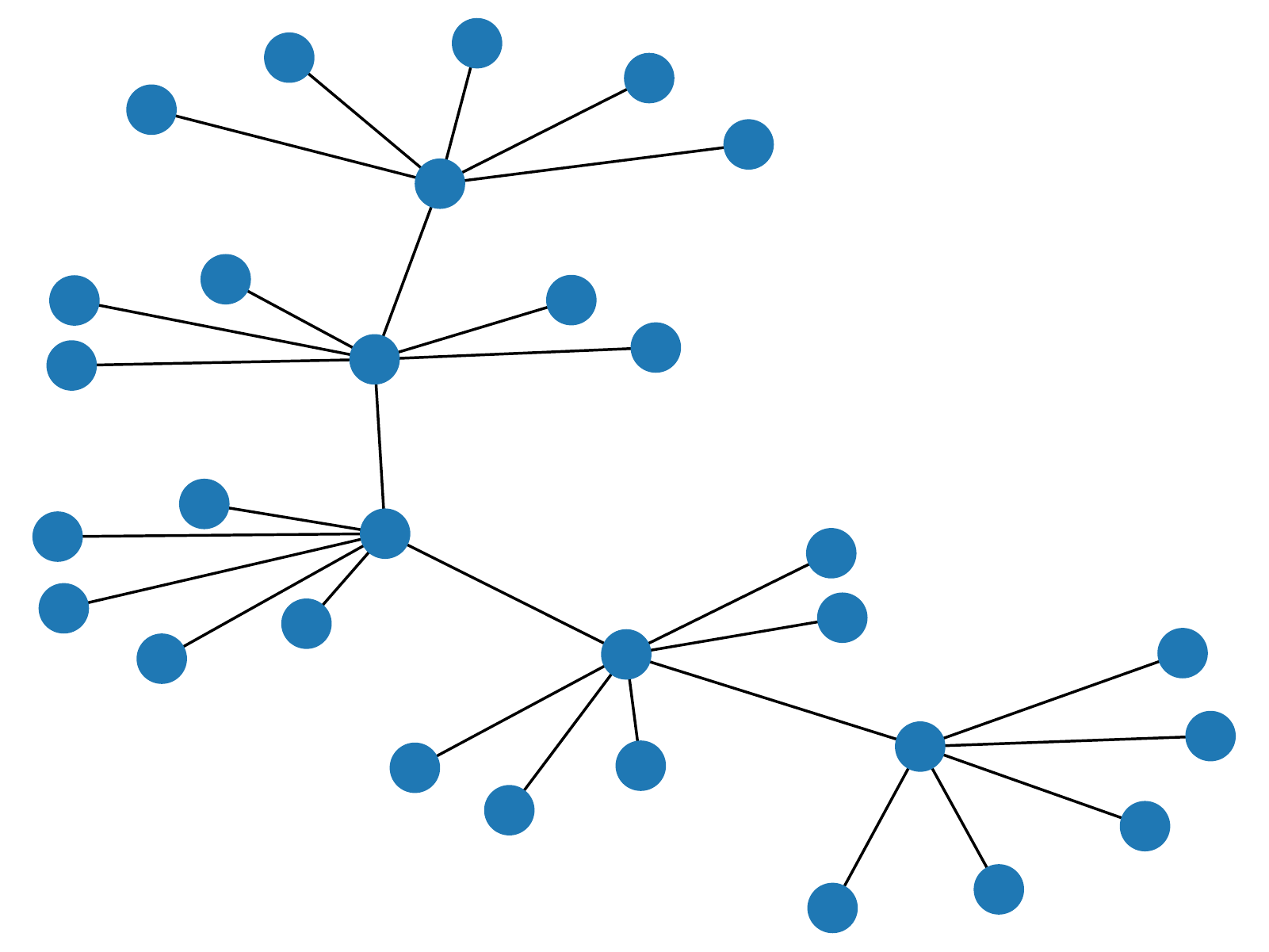}
    \caption{The tree $T$ on the left has $f(T)=\frac{1}{3}$. For $T_{30,5}$ on the right, we have $f(T_{30,5})=\frac{1}{6}$.}
    \label{fig:tree30}
\end{figure}

There are many things that are still not known about the location of the peaks of various tree polynomials. We mention only one open conjecture in the area, attributed to Peter Shor. Note that the lower bound corresponds to the star and the upper bound to the path.

\begin{conjecture}[Shor~\cite{collins}]
For any tree on $n$ vertices, the peak $p_D(T)$ satisfies
$$\Big\lfloor \frac{n}{2}\Big\rfloor \leq p_D(T) \leq \Bigg\lceil n\left(1-\frac{1}{\sqrt{5}}\right)\Bigg\rceil.$$
\end{conjecture}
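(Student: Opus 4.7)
The plan is to reduce Shor's conjecture to directional inequalities between consecutive normalized coefficients, leveraging the log-concavity of $(d_0(T),\ldots,d_{n-2}(T))$ proved in~\cite{aalipour2015proof}. For a log-concave sequence the ratios $d_k/d_{k-1}$ are non-increasing, so the peak lies at the largest index where this ratio is still $\geq 1$. Consequently the lower bound $p_D(T) \geq \lfloor n/2 \rfloor$ follows once one establishes $d_{k-1}(T) \leq d_k(T)$ for every tree $T$ and every $k \leq \lfloor n/2 \rfloor$, and the upper bound $p_D(T) \leq \lceil n(1-1/\sqrt 5)\rceil$ follows once one establishes $d_k(T) \geq d_{k+1}(T)$ at $k = \lceil n(1-1/\sqrt 5)\rceil$ for every $T$. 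The star saturates the lower bound and the path asymptotically saturates the upper bound, so both sides are tight.

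The workhorse in either direction would be the Graham-Lov\'asz combinatorial expansion~\cite{grahamlovasz}, which writes each coefficient $\delta_k$, and hence each normalized $d_k(T)$, as a weighted sum over certain partitions of $V(T)$ into subtrees, with weights encoding the branching data at the cut-edges. This expansion is the only known handle on $d_k(T)$ for general $T$, and it would play here the role that the matching-number formula played in the proof of Proposition~\ref{prop:peakbounds}.

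For the lower bound, I would try to construct, for each $k \leq \lfloor n/2 \rfloor$, a weight-respecting injection from the partitions contributing to $d_{k-1}(T)$ into those contributing to $d_k(T)$. A natural candidate is an edge-splitting map: since $k-1 < n/2$, a partition into $n-k+1$ subtrees must contain a part of size $\geq 2$, and splitting that part along an appropriately chosen interior edge produces a partition counted by $d_k(T)$ while multiplying the Graham-Lov\'asz weight by a controllable factor. Verifying first that the comparison is favourable for stars, caterpillars and double-stars should give a blueprint for a general induction on the number of branching vertices.

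For the upper bound, the natural route is to prove a monotonicity principle of the form $p_D(T) \leq p_D(P_n)$ among all $n$-vertex trees, and then invoke Collins's asymptotic computation giving $p_D(P_n)/n \to 1 - 1/\sqrt 5$ (together with a direct check for small $n$). One would try to show that a local straightening move, in which a branch is grafted onto an extension of the longest path, never shifts the peak to the left, which by log-concavity reduces to a single coefficient inequality. This is where I expect the main obstacle to lie: unlike the adjacency polynomial, the distance polynomial depends on the full metric of $T$, so local tree surgeries have genuinely global effects on the $d_k(T)$, and even the sign of $d_k(T) - d_k(T')$ under the simplest moves is difficult to predict. Identifying the right monotone quantity is precisely the sort of task the experiments of the preceding sections could help with, by generating many near-extremal trees whose common structure could hint at the correct invariant to track.
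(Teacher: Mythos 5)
There is a genuine gap here: the statement you are addressing is not a theorem of the paper at all, but an \emph{open} conjecture (attributed to Peter Shor) that the paper records without proof, and your text is a research plan rather than a proof. The reduction you begin with is sound as far as it goes --- by the log-concavity result of~\cite{aalipour2015proof} the sequence $d_0(T),\ldots,d_{n-2}(T)$ is unimodal, so locating the peak does reduce to the two families of consecutive-coefficient inequalities you state --- but neither family is established. For the lower bound, the ``weight-respecting injection'' via edge-splitting in the Graham--Lov\'asz expansion~\cite{grahamlovasz} is only named as a candidate; the whole difficulty is that the weights attached to a partition into subtrees depend on global branching data, and you give no argument that splitting a part multiplies the weight by a factor that keeps the comparison favourable, nor any induction that actually closes. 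For the upper bound, the proposed monotonicity principle $p_D(T)\leq p_D(P_n)$ is an unproven structural claim that would by itself essentially settle this half of the conjecture; you yourself identify the sign of $d_k(T)-d_k(T')$ under local surgeries as unpredictable, which is an admission that the key step is missing rather than a proof of it. Note also that the paper, when it needs a bound on $p_D$, only invokes the much weaker known estimate $p_D(T)\leq\lceil\tfrac{2}{3}n(T)\rceil$ from~\cite{aalipour2015proof} (see the discussion before the proof of Theorem~\ref{thm:collins}), precisely because nothing close to the conjectured $\lceil n(1-1/\sqrt{5})\rceil$ bound is available; so what you have written should be framed as a possible strategy for an open problem, not as a proof.
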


\subsection{Transmission regularity and the distance Laplacian}\label{subsec:transmission}

We continue our study of the spectrum of various graph matrices associated to graphs. We will consider the following question: what information about the graph can be recovered from the spectrum of such a matrix? Here we will focus on the \emph{distance Laplacian} of $G$, denoted by $\mathcal{D}^L(G)$. 

Given a connected graph $G$ with vertex set $V(G)=\{v_1,\ldots,v_n\}$, we denote by $t(v_i)=\sum_{u\in V(G)}d(v_i,u)$ the \emph{transmission of vertex $v_i$}, i.e.~the sum of distances from $v_i$ to all other vertices in $G$. We say that $G$ is \emph{transmission regular} if $t(v_i)=t(v_j)$ for all $i,j$. Let $T(G)$ be the diagonal matrix with entries $\{ t(v_i): i\in [n] \}$, and let $D(G)$ be the distance matrix of $G$ with $i,j$-entry equal to $d(v_i,v_j)$. The distance Laplacian is then defined as $\mathcal{D}^L(G)= T(G)-D(G)$. Denote the spectrum of $\mathcal{D}^L(G)$, i.e.~the multiset of its eigenvalues, by $\text{spec}_{\mathcal{D}^L}(G)$.

Given a graph property $\mathcal{P}$ , we say $\mathcal{P}$ is preserved by $\mathcal{D}^L$-cospectrality if $\text{spec}_{\mathcal{D}^L}(G)=\text{spec}_{\mathcal{D}^L}(H)$ implies $\mathcal{P}(G)=\mathcal{P}(H)$. A natural question to ask is which graph properties are preserved by $\mathcal{D}^L$-cospectrality?

This question has a long history, for an overview on the extensive research on this topic we refer the reader to the recent survey~\cite{surveydistance} and the references therein, in particular~\cite{hogben}. Hogben--Reinhart~\cite{surveydistance} put a lot of emphasis on the spectral properties of transmission regular graphs -- indeed, as shown in Table~7.2 in their survey, it is the last of the natural graph properties considered in~\cite{surveydistance} for which is not known whether it is preserved by $\mathcal{D}^L$-cospectrality. 

Our main result in this section is to fill in this gap, by showing that transmission regularity is not preserved by $\mathcal{D}^L$-cospectrality. That is, there exist graphs $G,H$ such that $G$ is transmission regular, $H$ is not transmission regular, and  $\text{spec}_{\mathcal{D}^L}(G)=\text{spec}_{\mathcal{D}^L}(H)$. Such two graphs can be seen in Figure~\ref{fig:cospectral}. The characteristic polynomials of their distance Laplacians are the same, they are given by $x^{12} - 216 x^{11} + 21188 x^{10} - 1245904 x^9 + 48797440 x^8 - 1336652544 x^7 + 26129121472 x^6 - 364516883456 x^5 + 3556516628224 x^4 - 23113129559040 x^3 + 90045806284800 x^2 - 159318669312000 x$.

\begin{figure}[hbt]
    \centering
    \includegraphics[scale=0.4]{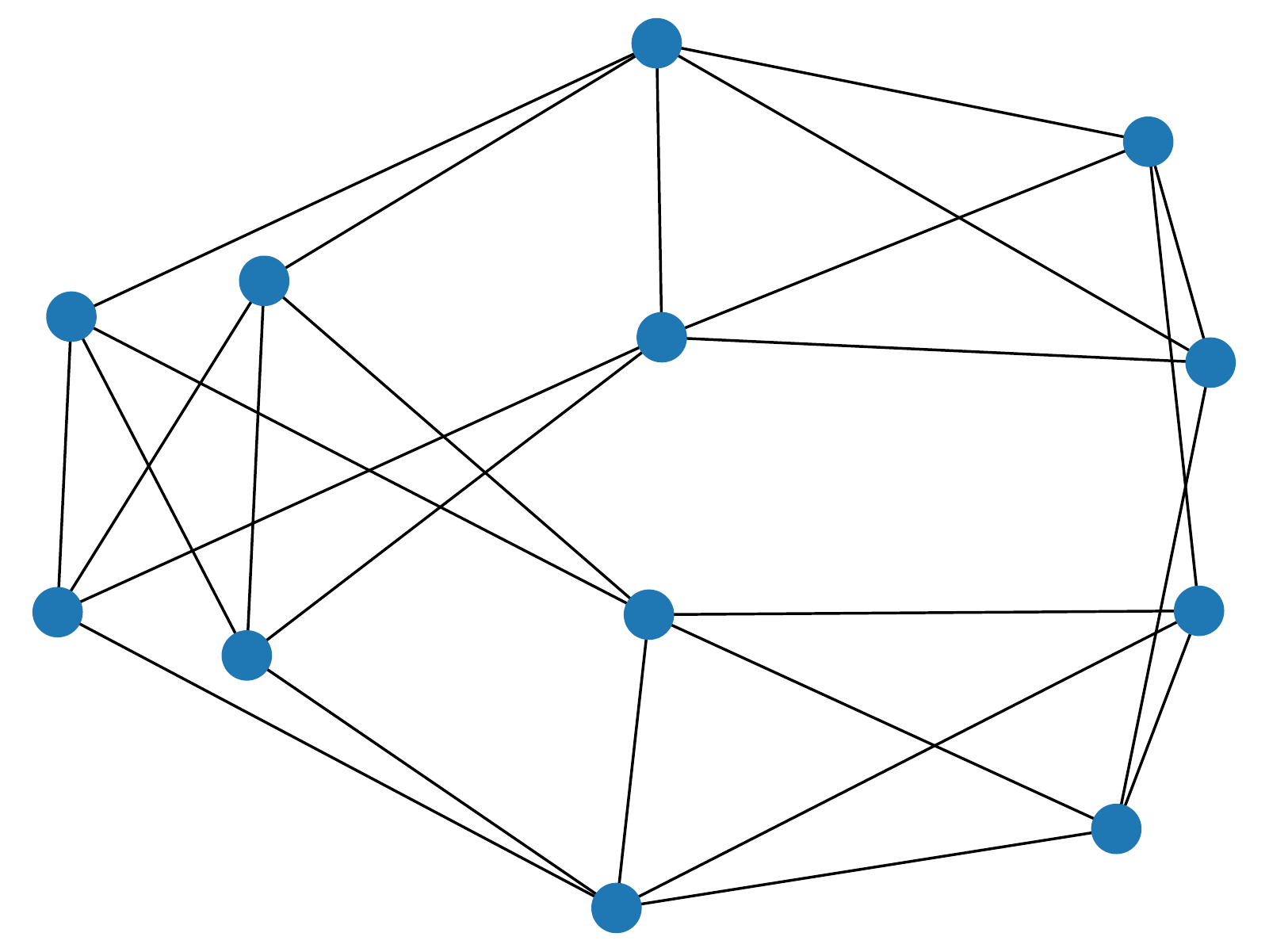}
    \includegraphics[scale=0.4]{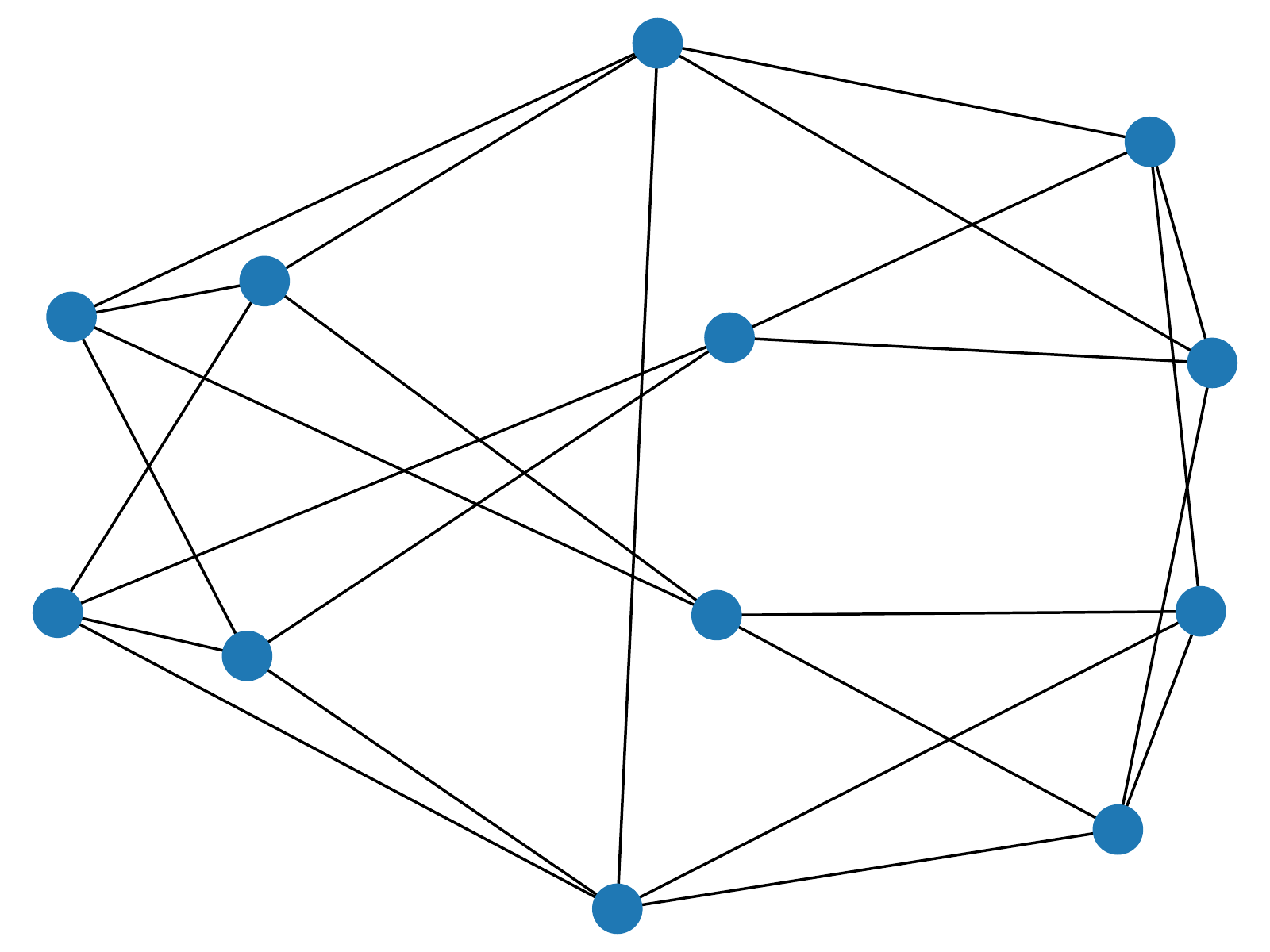}
    \caption{The graph on the left is transmission regular, whereas the graph on the right is not. The characteristic polynomials of their distance Laplacians are the same, so they are $\mathcal{D}^L$-cospectral.}
    \label{fig:cospectral}
\end{figure}

The construction is not unique. There are many different ways one can design a reward function to use with the cross-entropy method in order to try to produce a cospectral pair of graphs as in Figure~\ref{fig:cospectral}. None of the reward functions we tried performed particularly well, in the end it seemed more of a coincidence that some of the runs of the algorithm stumbled upon a construction. It is likely that other algorithms are much better suited for this problem.

\subsection{The permanent of 312-pattern avoiding 0-1 matrices}\label{subsec:perm312}

Pattern avoidance, and permutation avoidance in particular, is a central topic in combinatorics. In recent decades the study of permutation patterns has become a
discipline in its own right, with hundreds of published papers. For an overview, we refer the reader to the books~\cite{permbook1,permbook2}, surveys~\cite{permsurvey1,permsurvey2}, and the many applications in Tenner’s database~\cite{tenner}.

Given a positive integer $n$, denote by $S_n$ the set of permutations of $[n]=\{1,2,\ldots,n\}$. Let $\sigma\in S_n$ and $\tau\in S_k$ for some integers $k\leq n$. We say the permutation $\sigma$ \emph{contains the pattern} $\tau$ if there exist integers $1\leq x_1\leq x_2\leq \ldots \leq x_k\leq n$ such that for $1\leq i,j\leq k$ we have
$$\sigma(x_i) < \sigma(x_j) \text{ if and only if } \tau(i)< \tau(j);$$ otherwise, we say $\sigma$ \emph{avoids} $\tau$. Denote by $S_n(\tau)$ the set of $\sigma\in S_n$ that avoid $\tau$.

The above definition can be extended to 0-1 matrices as well. Let $A$ and $P$ be 0-1 matrices. We say that $A$ \emph{contains} the $k \times \ell$ matrix $P=(p_{ij})$ if there exists a $k \times \ell$ submatrix $D = (d_{ij})$ of $A$ with $d_{ij}\geq p_{ij}$ for all $i,j$; otherwise we say that $A$ \emph{avoids} $P$.

To any permutation $\sigma\in S_n$ we can associate a permutation matrix, which is the $n\times n$ 0-1 matrix with ones precisely in positions $(i,\sigma(i))$ for $i=1,2,\ldots,n$. We say a matrix $A$ \emph{avoids} $\sigma$ if $A$ avoids the permutation matrix associated with $\sigma$.

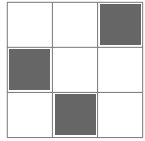
\begin{figure}[hbt]
    \centering
    \begin{tikzpicture}[scale=2]
\draw[step=0.3cm,gray,thin] (0,0) grid (0.8999999999999999,0.8999999999999999);
\fill[black!60!white] (0.315,0.285) rectangle (0.585,0.0149999999999999);
\fill[black!60!white] (0.015,0.585) rectangle (0.285,0.3149999999999999);
\fill[black!60!white] (0.615,0.885) rectangle (0.885,0.6149999999999999);
\end{tikzpicture}

    \caption{The pattern 312}
    \label{fig:312}
\end{figure}

 For a 0-1 matrix $P$ let $f(n, P)$ be the maximum number of $1$-entries in an $n \times n$ 0-1 matrix avoiding~$P$. F\"uredi and Hajnal conjectured~\cite{furedihajnal} that for all permutation matrices $P$ we have $f(n, P)= O(n)$. A cornerstone result of Marcus and Tardos~\cite{marcustardos} is that the F\"uredi--Hajnal conjecture is true. By a result of Klazar~\cite{klazar}, this also implied the celebrated Stanley-Wilf conjecture, which states that for all permutations $\tau$ there exists a constant $c_\tau$ such that $|S_n(\tau)|\leq c_\tau^n$. A breakthrough result by Fox~\cite{fox} shows that $c_\tau=2^{k^{\theta(1)}}$ for almost all $\tau\in S_k$. 
 
 The \emph{permanent} of an $n\times n$ matrix $A$ is defined as
 $$\text{per}(A)=\sum_{\sigma \in S_n}\prod_{i=1}^n a_{i,\sigma(i)}.$$
 Observe that if $J_n$ is the $n\times n$ matrix with all 1 entries and we restrict the sum to $S_n(\tau)$ then the resulting permanent is equal to $|S_n(\tau)|$. Perhaps motivated by this observation, Brualdi and Cao asked the following beautiful question.  Let $M_n(\sigma)$ denote the set of $n\times n$ 0-1 matrices that avoid $\sigma$.
 \begin{question}[\label{ques:brualdi}Brualdi--Cao~\cite{brualdi}]
Given an integer $n$ and a permutation $\sigma$, what is the value of $$f_\sigma(n)=\max\{\text{per}(A) : A \in M_n(\sigma)\}?$$
 \end{question}
 
Brualdi and Cao observed that this question is already interesting when $\sigma\in S_3$ is a permutation on 3 elements. They guessed that the answer to Question~\ref{ques:brualdi} when $\sigma=312$ is given by the following matrix, illustrated here for $n=5$:
$$\begin{bmatrix}
1 & 1 & 1 & 0 & 0 \\
1 & 1 & 1 & 1 & 0 \\
1 & 0 & 1 & 1 & 1 \\
1 & 0 & 0 & 1 & 1 \\
1 & 0 & 0 & 0 & 1 \\
\end{bmatrix}$$
For $n=5$ this matrix has permanent 12, and in general the permanent is given by the $(n+2)$-th Fibonacci number minus one.

By using the cross-entropy method together with some ad hoc methods we managed to find better constructions giving improved lower bounds for $f_{312}(n)$, disproving the above guess of Brualdi--Cao.  These constructions can be seen in Figure~\ref{fig:brualdi1}. We believe these are optimal up to at least $n= 10$, but we only have a computer-assisted proof showing their optimality for $n\leq 8$. This proves that the sequence $f_{312}(n)$ has the unexpected initial segment
$$1,\quad 2,\quad 4,\quad 8,\quad 16,\quad 32,\quad 64,\quad 120.$$

These constructions are, for most values of $n$, not unique. The neural network-based approach performed well by itself for about $n\leq 10$, but for larger $n$ the numerous calculations of matrix permanents inherent to the cross-entropy method proved too expensive. For $n=11,12$ we used ad hoc methods to improve the constructions given by the algorithm. The construction for $n=13$ was found under the very strong assumption that it should look similar to the previous matrices, and it is likely not optimal. It is generally believed that the permanent cannot be computed in polynomial time, as a seminal result of Valiant~\cite{valiant} shows that computing the permanent of 0-1 matrices is \#P-complete.  We do not know if there is a fast algorithm for computing the permanent if the 0-1 matrix is assumed to be 312-avoiding.

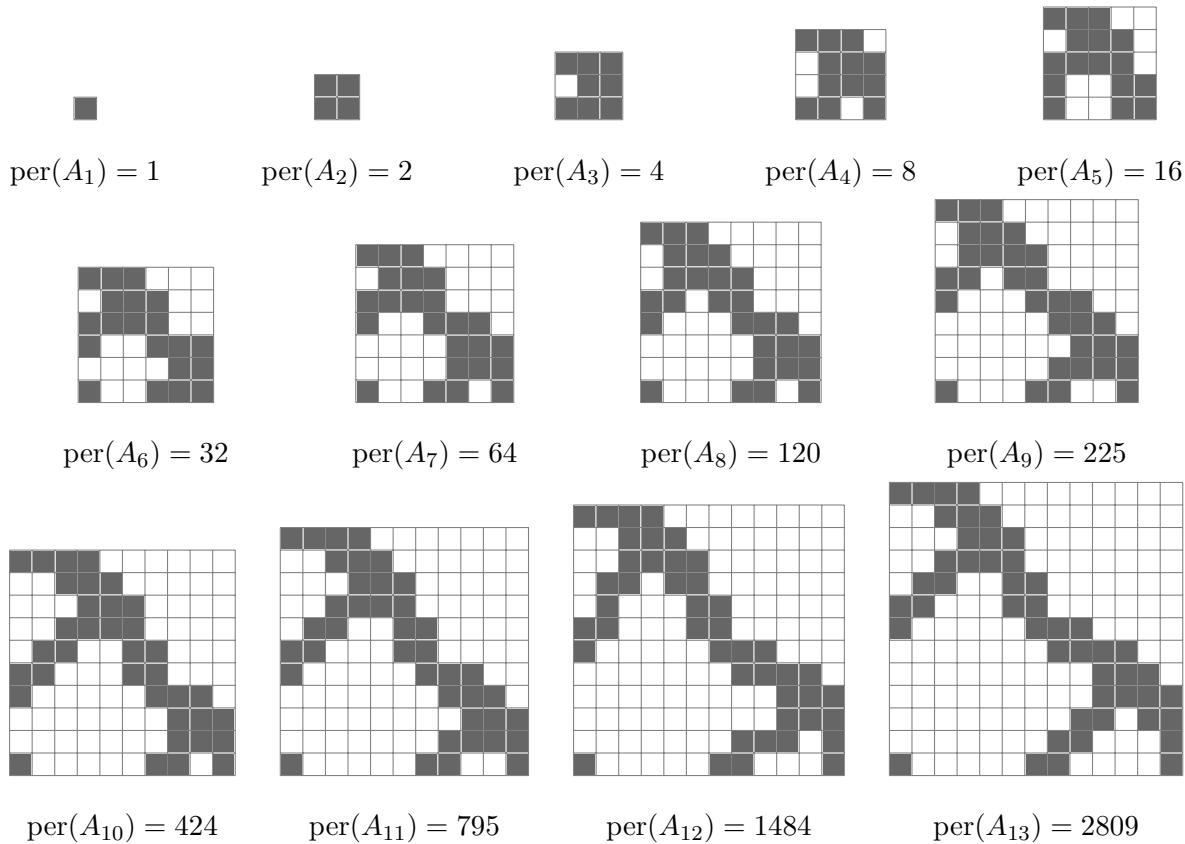
\begin{figure}[hbt]
    \centering
    
\begin{tikzpicture}
\draw[step=0.3cm,gray,thin] (0,0) grid (0.3,0.3);
\fill[black!60!white] (0.015,0.285) rectangle (0.285,0.015000000000000013);
\node[] at (0.15,-0.7) {  $\text{per}\left(A_{1}\right)=1$};\end{tikzpicture}
\hspace{0.7 cm}
\begin{tikzpicture}
\draw[step=0.3cm,gray,thin] (0,0) grid (0.6,0.6);
\fill[black!60!white] (0.015,0.585) rectangle (0.285,0.315);
\fill[black!60!white] (0.315,0.585) rectangle (0.585,0.315);
\fill[black!60!white] (0.015,0.285) rectangle (0.285,0.015000000000000013);
\fill[black!60!white] (0.315,0.285) rectangle (0.585,0.015000000000000013);
\node[] at (0.3,-0.7) {  $\text{per}\left(A_{2}\right)=2$};\end{tikzpicture}
\hspace{0.7 cm}
\begin{tikzpicture}
\draw[step=0.3cm,gray,thin] (0,0) grid (0.8999999999999999,0.8999999999999999);
\fill[black!60!white] (0.015,0.885) rectangle (0.285,0.6149999999999999);
\fill[black!60!white] (0.315,0.885) rectangle (0.585,0.6149999999999999);
\fill[black!60!white] (0.6149999999999999,0.885) rectangle (0.885,0.6149999999999999);
\fill[black!60!white] (0.315,0.585) rectangle (0.585,0.315);
\fill[black!60!white] (0.6149999999999999,0.585) rectangle (0.885,0.315);
\fill[black!60!white] (0.015,0.285) rectangle (0.285,0.015000000000000013);
\fill[black!60!white] (0.315,0.285) rectangle (0.585,0.015000000000000013);
\fill[black!60!white] (0.6149999999999999,0.285) rectangle (0.885,0.015000000000000013);
\node[] at (0.44999999999999996,-0.7) {  $\text{per}\left(A_{3}\right)=4$};\end{tikzpicture}
\hspace{0.7 cm}
\begin{tikzpicture}
\draw[step=0.3cm,gray,thin] (0,0) grid (1.2,1.2);
\fill[black!60!white] (0.015,1.185) rectangle (0.285,0.9149999999999999);
\fill[black!60!white] (0.315,1.185) rectangle (0.585,0.9149999999999999);
\fill[black!60!white] (0.6149999999999999,1.185) rectangle (0.885,0.9149999999999999);
\fill[black!60!white] (0.315,0.885) rectangle (0.585,0.6149999999999999);
\fill[black!60!white] (0.6149999999999999,0.885) rectangle (0.885,0.6149999999999999);
\fill[black!60!white] (0.9149999999999999,0.885) rectangle (1.185,0.6149999999999999);
\fill[black!60!white] (0.315,0.585) rectangle (0.585,0.315);
\fill[black!60!white] (0.6149999999999999,0.585) rectangle (0.885,0.315);
\fill[black!60!white] (0.9149999999999999,0.585) rectangle (1.185,0.315);
\fill[black!60!white] (0.015,0.285) rectangle (0.285,0.015000000000000013);
\fill[black!60!white] (0.315,0.285) rectangle (0.585,0.015000000000000013);
\fill[black!60!white] (0.9149999999999999,0.285) rectangle (1.185,0.015000000000000013);
\node[] at (0.6,-0.7) {  $\text{per}\left(A_{4}\right)=8$};\end{tikzpicture}
\hspace{0.7 cm}
\begin{tikzpicture}
\draw[step=0.3cm,gray,thin] (0,0) grid (1.5,1.5);
\fill[black!60!white] (0.015,1.485) rectangle (0.285,1.2149999999999999);
\fill[black!60!white] (0.315,1.485) rectangle (0.585,1.2149999999999999);
\fill[black!60!white] (0.6149999999999999,1.485) rectangle (0.885,1.2149999999999999);
\fill[black!60!white] (0.315,1.185) rectangle (0.585,0.9149999999999999);
\fill[black!60!white] (0.6149999999999999,1.185) rectangle (0.885,0.9149999999999999);
\fill[black!60!white] (0.9149999999999999,1.185) rectangle (1.185,0.9149999999999999);
\fill[black!60!white] (0.015,0.885) rectangle (0.285,0.6149999999999999);
\fill[black!60!white] (0.315,0.885) rectangle (0.585,0.6149999999999999);
\fill[black!60!white] (0.6149999999999999,0.885) rectangle (0.885,0.6149999999999999);
\fill[black!60!white] (0.9149999999999999,0.885) rectangle (1.185,0.6149999999999999);
\fill[black!60!white] (0.015,0.585) rectangle (0.285,0.315);
\fill[black!60!white] (0.9149999999999999,0.585) rectangle (1.185,0.315);
\fill[black!60!white] (1.2149999999999999,0.585) rectangle (1.485,0.315);
\fill[black!60!white] (0.015,0.285) rectangle (0.285,0.015000000000000013);
\fill[black!60!white] (0.9149999999999999,0.285) rectangle (1.185,0.015000000000000013);
\fill[black!60!white] (1.2149999999999999,0.285) rectangle (1.485,0.015000000000000013);
\node[] at (0.75,-0.7) {  $\text{per}\left(A_{5}\right)=16$};\end{tikzpicture}

\begin{tikzpicture}
\draw[step=0.3cm,gray,thin] (0,0) grid (1.7999999999999998,1.7999999999999998);
\fill[black!60!white] (0.015,1.785) rectangle (0.285,1.515);
\fill[black!60!white] (0.315,1.785) rectangle (0.585,1.515);
\fill[black!60!white] (0.6149999999999999,1.785) rectangle (0.885,1.515);
\fill[black!60!white] (0.315,1.485) rectangle (0.585,1.2149999999999999);
\fill[black!60!white] (0.6149999999999999,1.485) rectangle (0.885,1.2149999999999999);
\fill[black!60!white] (0.9149999999999999,1.485) rectangle (1.185,1.2149999999999999);
\fill[black!60!white] (0.015,1.185) rectangle (0.285,0.9149999999999999);
\fill[black!60!white] (0.315,1.185) rectangle (0.585,0.9149999999999999);
\fill[black!60!white] (0.6149999999999999,1.185) rectangle (0.885,0.9149999999999999);
\fill[black!60!white] (0.9149999999999999,1.185) rectangle (1.185,0.9149999999999999);
\fill[black!60!white] (0.015,0.885) rectangle (0.285,0.6149999999999999);
\fill[black!60!white] (0.9149999999999999,0.885) rectangle (1.185,0.6149999999999999);
\fill[black!60!white] (1.2149999999999999,0.885) rectangle (1.485,0.6149999999999999);
\fill[black!60!white] (1.515,0.885) rectangle (1.785,0.6149999999999999);
\fill[black!60!white] (1.2149999999999999,0.585) rectangle (1.485,0.315);
\fill[black!60!white] (1.515,0.585) rectangle (1.785,0.315);
\fill[black!60!white] (0.015,0.285) rectangle (0.285,0.015000000000000013);
\fill[black!60!white] (0.9149999999999999,0.285) rectangle (1.185,0.015000000000000013);
\fill[black!60!white] (1.2149999999999999,0.285) rectangle (1.485,0.015000000000000013);
\fill[black!60!white] (1.515,0.285) rectangle (1.785,0.015000000000000013);
\node[] at (0.8999999999999999,-0.7) {  $\text{per}\left(A_{6}\right)=32$};\end{tikzpicture}
\hspace{1.0 cm}
\begin{tikzpicture}
\draw[step=0.3cm,gray,thin] (0,0) grid (2.1,2.1);
\fill[black!60!white] (0.015,2.085) rectangle (0.285,1.815);
\fill[black!60!white] (0.315,2.085) rectangle (0.585,1.815);
\fill[black!60!white] (0.6149999999999999,2.085) rectangle (0.885,1.815);
\fill[black!60!white] (0.315,1.785) rectangle (0.585,1.515);
\fill[black!60!white] (0.6149999999999999,1.785) rectangle (0.885,1.515);
\fill[black!60!white] (0.9149999999999999,1.785) rectangle (1.185,1.515);
\fill[black!60!white] (0.015,1.485) rectangle (0.285,1.2149999999999999);
\fill[black!60!white] (0.315,1.485) rectangle (0.585,1.2149999999999999);
\fill[black!60!white] (0.6149999999999999,1.485) rectangle (0.885,1.2149999999999999);
\fill[black!60!white] (0.9149999999999999,1.485) rectangle (1.185,1.2149999999999999);
\fill[black!60!white] (0.015,1.185) rectangle (0.285,0.9149999999999999);
\fill[black!60!white] (0.9149999999999999,1.185) rectangle (1.185,0.9149999999999999);
\fill[black!60!white] (1.2149999999999999,1.185) rectangle (1.485,0.9149999999999999);
\fill[black!60!white] (1.515,1.185) rectangle (1.785,0.9149999999999999);
\fill[black!60!white] (1.2149999999999999,0.885) rectangle (1.485,0.6149999999999999);
\fill[black!60!white] (1.515,0.885) rectangle (1.785,0.6149999999999999);
\fill[black!60!white] (1.815,0.885) rectangle (2.085,0.6149999999999999);
\fill[black!60!white] (1.2149999999999999,0.585) rectangle (1.485,0.315);
\fill[black!60!white] (1.515,0.585) rectangle (1.785,0.315);
\fill[black!60!white] (1.815,0.585) rectangle (2.085,0.315);
\fill[black!60!white] (0.015,0.285) rectangle (0.285,0.015000000000000013);
\fill[black!60!white] (0.9149999999999999,0.285) rectangle (1.185,0.015000000000000013);
\fill[black!60!white] (1.2149999999999999,0.285) rectangle (1.485,0.015000000000000013);
\fill[black!60!white] (1.815,0.285) rectangle (2.085,0.015000000000000013);
\node[] at (1.05,-0.7) {  $\text{per}\left(A_{7}\right)=64$};\end{tikzpicture}
\hspace{1.0 cm}
\begin{tikzpicture}
\draw[step=0.3cm,gray,thin] (0,0) grid (2.4,2.4);
\fill[black!60!white] (0.015,2.385) rectangle (0.285,2.1149999999999998);
\fill[black!60!white] (0.315,2.385) rectangle (0.585,2.1149999999999998);
\fill[black!60!white] (0.6149999999999999,2.385) rectangle (0.885,2.1149999999999998);
\fill[black!60!white] (0.315,2.085) rectangle (0.585,1.815);
\fill[black!60!white] (0.6149999999999999,2.085) rectangle (0.885,1.815);
\fill[black!60!white] (0.9149999999999999,2.085) rectangle (1.185,1.815);
\fill[black!60!white] (0.315,1.785) rectangle (0.585,1.515);
\fill[black!60!white] (0.6149999999999999,1.785) rectangle (0.885,1.515);
\fill[black!60!white] (0.9149999999999999,1.785) rectangle (1.185,1.515);
\fill[black!60!white] (1.2149999999999999,1.785) rectangle (1.485,1.515);
\fill[black!60!white] (0.015,1.485) rectangle (0.285,1.2149999999999999);
\fill[black!60!white] (0.315,1.485) rectangle (0.585,1.2149999999999999);
\fill[black!60!white] (0.9149999999999999,1.485) rectangle (1.185,1.2149999999999999);
\fill[black!60!white] (1.2149999999999999,1.485) rectangle (1.485,1.2149999999999999);
\fill[black!60!white] (0.015,1.185) rectangle (0.285,0.9149999999999999);
\fill[black!60!white] (1.2149999999999999,1.185) rectangle (1.485,0.9149999999999999);
\fill[black!60!white] (1.515,1.185) rectangle (1.785,0.9149999999999999);
\fill[black!60!white] (1.815,1.185) rectangle (2.085,0.9149999999999999);
\fill[black!60!white] (1.515,0.885) rectangle (1.785,0.6149999999999999);
\fill[black!60!white] (1.815,0.885) rectangle (2.085,0.6149999999999999);
\fill[black!60!white] (2.1149999999999998,0.885) rectangle (2.385,0.6149999999999999);
\fill[black!60!white] (1.515,0.585) rectangle (1.785,0.315);
\fill[black!60!white] (1.815,0.585) rectangle (2.085,0.315);
\fill[black!60!white] (2.1149999999999998,0.585) rectangle (2.385,0.315);
\fill[black!60!white] (0.015,0.285) rectangle (0.285,0.015000000000000013);
\fill[black!60!white] (1.2149999999999999,0.285) rectangle (1.485,0.015000000000000013);
\fill[black!60!white] (1.515,0.285) rectangle (1.785,0.015000000000000013);
\fill[black!60!white] (2.1149999999999998,0.285) rectangle (2.385,0.015000000000000013);
\node[] at (1.2,-0.7) {  $\text{per}\left(A_{8}\right)=120$};\end{tikzpicture}
\hspace{1.0 cm}
\begin{tikzpicture}
\draw[step=0.3cm,gray,thin] (0,0) grid (2.6999999999999997,2.6999999999999997);
\fill[black!60!white] (0.015,2.6849999999999996) rectangle (0.285,2.415);
\fill[black!60!white] (0.315,2.6849999999999996) rectangle (0.585,2.415);
\fill[black!60!white] (0.6149999999999999,2.6849999999999996) rectangle (0.885,2.415);
\fill[black!60!white] (0.315,2.385) rectangle (0.585,2.1149999999999998);
\fill[black!60!white] (0.6149999999999999,2.385) rectangle (0.885,2.1149999999999998);
\fill[black!60!white] (0.9149999999999999,2.385) rectangle (1.185,2.1149999999999998);
\fill[black!60!white] (0.315,2.085) rectangle (0.585,1.815);
\fill[black!60!white] (0.6149999999999999,2.085) rectangle (0.885,1.815);
\fill[black!60!white] (0.9149999999999999,2.085) rectangle (1.185,1.815);
\fill[black!60!white] (1.2149999999999999,2.085) rectangle (1.485,1.815);
\fill[black!60!white] (0.015,1.785) rectangle (0.285,1.515);
\fill[black!60!white] (0.315,1.785) rectangle (0.585,1.515);
\fill[black!60!white] (0.9149999999999999,1.785) rectangle (1.185,1.515);
\fill[black!60!white] (1.2149999999999999,1.785) rectangle (1.485,1.515);
\fill[black!60!white] (0.015,1.485) rectangle (0.285,1.2149999999999999);
\fill[black!60!white] (1.2149999999999999,1.485) rectangle (1.485,1.2149999999999999);
\fill[black!60!white] (1.515,1.485) rectangle (1.785,1.2149999999999999);
\fill[black!60!white] (1.815,1.485) rectangle (2.085,1.2149999999999999);
\fill[black!60!white] (1.515,1.185) rectangle (1.785,0.9149999999999999);
\fill[black!60!white] (1.815,1.185) rectangle (2.085,0.9149999999999999);
\fill[black!60!white] (2.1149999999999998,1.185) rectangle (2.385,0.9149999999999999);
\fill[black!60!white] (1.815,0.885) rectangle (2.085,0.6149999999999999);
\fill[black!60!white] (2.1149999999999998,0.885) rectangle (2.385,0.6149999999999999);
\fill[black!60!white] (2.415,0.885) rectangle (2.6849999999999996,0.6149999999999999);
\fill[black!60!white] (1.515,0.585) rectangle (1.785,0.315);
\fill[black!60!white] (1.815,0.585) rectangle (2.085,0.315);
\fill[black!60!white] (2.1149999999999998,0.585) rectangle (2.385,0.315);
\fill[black!60!white] (2.415,0.585) rectangle (2.6849999999999996,0.315);
\fill[black!60!white] (0.015,0.285) rectangle (0.285,0.015000000000000013);
\fill[black!60!white] (1.2149999999999999,0.285) rectangle (1.485,0.015000000000000013);
\fill[black!60!white] (1.515,0.285) rectangle (1.785,0.015000000000000013);
\fill[black!60!white] (2.415,0.285) rectangle (2.6849999999999996,0.015000000000000013);
\node[] at (1.3499999999999999,-0.7) {  $\text{per}\left(A_{9}\right)=225$};\end{tikzpicture}

\begin{tikzpicture}
\draw[step=0.3cm,gray,thin] (0,0) grid (3.0,3.0);
\fill[black!60!white] (0.015,2.985) rectangle (0.285,2.7150000000000003);
\fill[black!60!white] (0.315,2.985) rectangle (0.585,2.7150000000000003);
\fill[black!60!white] (0.6149999999999999,2.985) rectangle (0.885,2.7150000000000003);
\fill[black!60!white] (0.9149999999999999,2.985) rectangle (1.185,2.7150000000000003);
\fill[black!60!white] (0.6149999999999999,2.6849999999999996) rectangle (0.885,2.415);
\fill[black!60!white] (0.9149999999999999,2.6849999999999996) rectangle (1.185,2.415);
\fill[black!60!white] (1.2149999999999999,2.6849999999999996) rectangle (1.485,2.415);
\fill[black!60!white] (0.9149999999999999,2.385) rectangle (1.185,2.1149999999999998);
\fill[black!60!white] (1.2149999999999999,2.385) rectangle (1.485,2.1149999999999998);
\fill[black!60!white] (1.515,2.385) rectangle (1.785,2.1149999999999998);
\fill[black!60!white] (0.6149999999999999,2.085) rectangle (0.885,1.815);
\fill[black!60!white] (0.9149999999999999,2.085) rectangle (1.185,1.815);
\fill[black!60!white] (1.2149999999999999,2.085) rectangle (1.485,1.815);
\fill[black!60!white] (1.515,2.085) rectangle (1.785,1.815);
\fill[black!60!white] (0.315,1.785) rectangle (0.585,1.515);
\fill[black!60!white] (0.6149999999999999,1.785) rectangle (0.885,1.515);
\fill[black!60!white] (1.515,1.785) rectangle (1.785,1.515);
\fill[black!60!white] (1.815,1.785) rectangle (2.085,1.515);
\fill[black!60!white] (0.015,1.485) rectangle (0.285,1.2149999999999999);
\fill[black!60!white] (0.315,1.485) rectangle (0.585,1.2149999999999999);
\fill[black!60!white] (1.515,1.485) rectangle (1.785,1.2149999999999999);
\fill[black!60!white] (1.815,1.485) rectangle (2.085,1.2149999999999999);
\fill[black!60!white] (0.015,1.185) rectangle (0.285,0.9149999999999999);
\fill[black!60!white] (1.815,1.185) rectangle (2.085,0.9149999999999999);
\fill[black!60!white] (2.1149999999999998,1.185) rectangle (2.385,0.9149999999999999);
\fill[black!60!white] (2.415,1.185) rectangle (2.6849999999999996,0.9149999999999999);
\fill[black!60!white] (2.1149999999999998,0.885) rectangle (2.385,0.6149999999999999);
\fill[black!60!white] (2.415,0.885) rectangle (2.6849999999999996,0.6149999999999999);
\fill[black!60!white] (2.7150000000000003,0.885) rectangle (2.985,0.6149999999999999);
\fill[black!60!white] (2.1149999999999998,0.585) rectangle (2.385,0.315);
\fill[black!60!white] (2.415,0.585) rectangle (2.6849999999999996,0.315);
\fill[black!60!white] (2.7150000000000003,0.585) rectangle (2.985,0.315);
\fill[black!60!white] (0.015,0.285) rectangle (0.285,0.015000000000000013);
\fill[black!60!white] (1.815,0.285) rectangle (2.085,0.015000000000000013);
\fill[black!60!white] (2.1149999999999998,0.285) rectangle (2.385,0.015000000000000013);
\fill[black!60!white] (2.7150000000000003,0.285) rectangle (2.985,0.015000000000000013);
\node[] at (1.5,-0.7) {  $\text{per}\left(A_{10}\right)=424$};\end{tikzpicture}
\hspace{0.2 cm}
\begin{tikzpicture}
\draw[step=0.3cm,gray,thin] (0,0) grid (3.3,3.3);
\fill[black!60!white] (0.015,3.2849999999999997) rectangle (0.285,3.015);
\fill[black!60!white] (0.315,3.2849999999999997) rectangle (0.585,3.015);
\fill[black!60!white] (0.6149999999999999,3.2849999999999997) rectangle (0.885,3.015);
\fill[black!60!white] (0.9149999999999999,3.2849999999999997) rectangle (1.185,3.015);
\fill[black!60!white] (0.6149999999999999,2.985) rectangle (0.885,2.7150000000000003);
\fill[black!60!white] (0.9149999999999999,2.985) rectangle (1.185,2.7150000000000003);
\fill[black!60!white] (1.2149999999999999,2.985) rectangle (1.485,2.7150000000000003);
\fill[black!60!white] (0.9149999999999999,2.6849999999999996) rectangle (1.185,2.415);
\fill[black!60!white] (1.2149999999999999,2.6849999999999996) rectangle (1.485,2.415);
\fill[black!60!white] (1.515,2.6849999999999996) rectangle (1.785,2.415);
\fill[black!60!white] (0.6149999999999999,2.385) rectangle (0.885,2.1149999999999998);
\fill[black!60!white] (0.9149999999999999,2.385) rectangle (1.185,2.1149999999999998);
\fill[black!60!white] (1.2149999999999999,2.385) rectangle (1.485,2.1149999999999998);
\fill[black!60!white] (1.515,2.385) rectangle (1.785,2.1149999999999998);
\fill[black!60!white] (0.315,2.085) rectangle (0.585,1.815);
\fill[black!60!white] (0.6149999999999999,2.085) rectangle (0.885,1.815);
\fill[black!60!white] (1.515,2.085) rectangle (1.785,1.815);
\fill[black!60!white] (1.815,2.085) rectangle (2.085,1.815);
\fill[black!60!white] (0.015,1.785) rectangle (0.285,1.515);
\fill[black!60!white] (0.315,1.785) rectangle (0.585,1.515);
\fill[black!60!white] (1.515,1.785) rectangle (1.785,1.515);
\fill[black!60!white] (1.815,1.785) rectangle (2.085,1.515);
\fill[black!60!white] (0.015,1.485) rectangle (0.285,1.2149999999999999);
\fill[black!60!white] (1.815,1.485) rectangle (2.085,1.2149999999999999);
\fill[black!60!white] (2.1149999999999998,1.485) rectangle (2.385,1.2149999999999999);
\fill[black!60!white] (2.415,1.485) rectangle (2.6849999999999996,1.2149999999999999);
\fill[black!60!white] (2.1149999999999998,1.185) rectangle (2.385,0.9149999999999999);
\fill[black!60!white] (2.415,1.185) rectangle (2.6849999999999996,0.9149999999999999);
\fill[black!60!white] (2.7150000000000003,1.185) rectangle (2.985,0.9149999999999999);
\fill[black!60!white] (2.415,0.885) rectangle (2.6849999999999996,0.6149999999999999);
\fill[black!60!white] (2.7150000000000003,0.885) rectangle (2.985,0.6149999999999999);
\fill[black!60!white] (3.015,0.885) rectangle (3.2849999999999997,0.6149999999999999);
\fill[black!60!white] (2.1149999999999998,0.585) rectangle (2.385,0.315);
\fill[black!60!white] (2.415,0.585) rectangle (2.6849999999999996,0.315);
\fill[black!60!white] (2.7150000000000003,0.585) rectangle (2.985,0.315);
\fill[black!60!white] (3.015,0.585) rectangle (3.2849999999999997,0.315);
\fill[black!60!white] (0.015,0.285) rectangle (0.285,0.015000000000000013);
\fill[black!60!white] (1.815,0.285) rectangle (2.085,0.015000000000000013);
\fill[black!60!white] (2.1149999999999998,0.285) rectangle (2.385,0.015000000000000013);
\fill[black!60!white] (3.015,0.285) rectangle (3.2849999999999997,0.015000000000000013);
\node[] at (1.65,-0.7) {  $\text{per}\left(A_{11}\right)=795$};\end{tikzpicture}
\hspace{0.2 cm}
\begin{tikzpicture}
\draw[step=0.3cm,gray,thin] (0,0) grid (3.5999999999999996,3.5999999999999996);
\fill[black!60!white] (0.015,3.5849999999999995) rectangle (0.285,3.315);
\fill[black!60!white] (0.315,3.5849999999999995) rectangle (0.585,3.315);
\fill[black!60!white] (0.6149999999999999,3.5849999999999995) rectangle (0.885,3.315);
\fill[black!60!white] (0.9149999999999999,3.5849999999999995) rectangle (1.185,3.315);
\fill[black!60!white] (0.6149999999999999,3.2849999999999997) rectangle (0.885,3.015);
\fill[black!60!white] (0.9149999999999999,3.2849999999999997) rectangle (1.185,3.015);
\fill[black!60!white] (1.2149999999999999,3.2849999999999997) rectangle (1.485,3.015);
\fill[black!60!white] (0.6149999999999999,2.985) rectangle (0.885,2.7150000000000003);
\fill[black!60!white] (0.9149999999999999,2.985) rectangle (1.185,2.7150000000000003);
\fill[black!60!white] (1.2149999999999999,2.985) rectangle (1.485,2.7150000000000003);
\fill[black!60!white] (1.515,2.985) rectangle (1.785,2.7150000000000003);
\fill[black!60!white] (0.315,2.6849999999999996) rectangle (0.585,2.415);
\fill[black!60!white] (0.6149999999999999,2.6849999999999996) rectangle (0.885,2.415);
\fill[black!60!white] (1.2149999999999999,2.6849999999999996) rectangle (1.485,2.415);
\fill[black!60!white] (1.515,2.6849999999999996) rectangle (1.785,2.415);
\fill[black!60!white] (0.315,2.385) rectangle (0.585,2.1149999999999998);
\fill[black!60!white] (1.515,2.385) rectangle (1.785,2.1149999999999998);
\fill[black!60!white] (1.815,2.385) rectangle (2.085,2.1149999999999998);
\fill[black!60!white] (0.015,2.085) rectangle (0.285,1.815);
\fill[black!60!white] (0.315,2.085) rectangle (0.585,1.815);
\fill[black!60!white] (1.515,2.085) rectangle (1.785,1.815);
\fill[black!60!white] (1.815,2.085) rectangle (2.085,1.815);
\fill[black!60!white] (0.015,1.785) rectangle (0.285,1.515);
\fill[black!60!white] (1.815,1.785) rectangle (2.085,1.515);
\fill[black!60!white] (2.1149999999999998,1.785) rectangle (2.385,1.515);
\fill[black!60!white] (2.415,1.785) rectangle (2.6849999999999996,1.515);
\fill[black!60!white] (2.1149999999999998,1.485) rectangle (2.385,1.2149999999999999);
\fill[black!60!white] (2.415,1.485) rectangle (2.6849999999999996,1.2149999999999999);
\fill[black!60!white] (2.7150000000000003,1.485) rectangle (2.985,1.2149999999999999);
\fill[black!60!white] (3.015,1.485) rectangle (3.2849999999999997,1.2149999999999999);
\fill[black!60!white] (2.7150000000000003,1.185) rectangle (2.985,0.9149999999999999);
\fill[black!60!white] (3.015,1.185) rectangle (3.2849999999999997,0.9149999999999999);
\fill[black!60!white] (3.315,1.185) rectangle (3.5849999999999995,0.9149999999999999);
\fill[black!60!white] (2.7150000000000003,0.885) rectangle (2.985,0.6149999999999999);
\fill[black!60!white] (3.015,0.885) rectangle (3.2849999999999997,0.6149999999999999);
\fill[black!60!white] (3.315,0.885) rectangle (3.5849999999999995,0.6149999999999999);
\fill[black!60!white] (2.1149999999999998,0.585) rectangle (2.385,0.315);
\fill[black!60!white] (2.415,0.585) rectangle (2.6849999999999996,0.315);
\fill[black!60!white] (2.7150000000000003,0.585) rectangle (2.985,0.315);
\fill[black!60!white] (3.315,0.585) rectangle (3.5849999999999995,0.315);
\fill[black!60!white] (0.015,0.285) rectangle (0.285,0.015000000000000013);
\fill[black!60!white] (1.815,0.285) rectangle (2.085,0.015000000000000013);
\fill[black!60!white] (2.1149999999999998,0.285) rectangle (2.385,0.015000000000000013);
\fill[black!60!white] (3.315,0.285) rectangle (3.5849999999999995,0.015000000000000013);
\node[] at (1.7999999999999998,-0.7) {  $\text{per}\left(A_{12}\right)=1484$};\end{tikzpicture}
\hspace{0.2 cm}
\begin{tikzpicture}
\draw[step=0.3cm,gray,thin] (0,0) grid (3.9,3.9);
\fill[black!60!white] (0.015,3.885) rectangle (0.285,3.615);
\fill[black!60!white] (0.315,3.885) rectangle (0.585,3.615);
\fill[black!60!white] (0.6149999999999999,3.885) rectangle (0.885,3.615);
\fill[black!60!white] (0.9149999999999999,3.885) rectangle (1.185,3.615);
\fill[black!60!white] (0.6149999999999999,3.5849999999999995) rectangle (0.885,3.315);
\fill[black!60!white] (0.9149999999999999,3.5849999999999995) rectangle (1.185,3.315);
\fill[black!60!white] (1.2149999999999999,3.5849999999999995) rectangle (1.485,3.315);
\fill[black!60!white] (0.9149999999999999,3.2849999999999997) rectangle (1.185,3.015);
\fill[black!60!white] (1.2149999999999999,3.2849999999999997) rectangle (1.485,3.015);
\fill[black!60!white] (1.515,3.2849999999999997) rectangle (1.785,3.015);
\fill[black!60!white] (0.6149999999999999,2.985) rectangle (0.885,2.7150000000000003);
\fill[black!60!white] (0.9149999999999999,2.985) rectangle (1.185,2.7150000000000003);
\fill[black!60!white] (1.2149999999999999,2.985) rectangle (1.485,2.7150000000000003);
\fill[black!60!white] (1.515,2.985) rectangle (1.785,2.7150000000000003);
\fill[black!60!white] (0.315,2.6849999999999996) rectangle (0.585,2.415);
\fill[black!60!white] (0.6149999999999999,2.6849999999999996) rectangle (0.885,2.415);
\fill[black!60!white] (1.515,2.6849999999999996) rectangle (1.785,2.415);
\fill[black!60!white] (1.815,2.6849999999999996) rectangle (2.085,2.415);
\fill[black!60!white] (0.015,2.385) rectangle (0.285,2.1149999999999998);
\fill[black!60!white] (0.315,2.385) rectangle (0.585,2.1149999999999998);
\fill[black!60!white] (1.515,2.385) rectangle (1.785,2.1149999999999998);
\fill[black!60!white] (1.815,2.385) rectangle (2.085,2.1149999999999998);
\fill[black!60!white] (0.015,2.085) rectangle (0.285,1.815);
\fill[black!60!white] (1.815,2.085) rectangle (2.085,1.815);
\fill[black!60!white] (2.1149999999999998,2.085) rectangle (2.385,1.815);
\fill[black!60!white] (2.415,2.085) rectangle (2.6849999999999996,1.815);
\fill[black!60!white] (2.1149999999999998,1.785) rectangle (2.385,1.515);
\fill[black!60!white] (2.415,1.785) rectangle (2.6849999999999996,1.515);
\fill[black!60!white] (2.7150000000000003,1.785) rectangle (2.985,1.515);
\fill[black!60!white] (3.015,1.785) rectangle (3.2849999999999997,1.515);
\fill[black!60!white] (2.7150000000000003,1.485) rectangle (2.985,1.2149999999999999);
\fill[black!60!white] (3.015,1.485) rectangle (3.2849999999999997,1.2149999999999999);
\fill[black!60!white] (3.315,1.485) rectangle (3.5849999999999995,1.2149999999999999);
\fill[black!60!white] (2.7150000000000003,1.185) rectangle (2.985,0.9149999999999999);
\fill[black!60!white] (3.015,1.185) rectangle (3.2849999999999997,0.9149999999999999);
\fill[black!60!white] (3.315,1.185) rectangle (3.5849999999999995,0.9149999999999999);
\fill[black!60!white] (3.615,1.185) rectangle (3.885,0.9149999999999999);
\fill[black!60!white] (2.415,0.885) rectangle (2.6849999999999996,0.6149999999999999);
\fill[black!60!white] (2.7150000000000003,0.885) rectangle (2.985,0.6149999999999999);
\fill[black!60!white] (3.315,0.885) rectangle (3.5849999999999995,0.6149999999999999);
\fill[black!60!white] (3.615,0.885) rectangle (3.885,0.6149999999999999);
\fill[black!60!white] (2.1149999999999998,0.585) rectangle (2.385,0.315);
\fill[black!60!white] (2.415,0.585) rectangle (2.6849999999999996,0.315);
\fill[black!60!white] (3.615,0.585) rectangle (3.885,0.315);
\fill[black!60!white] (0.015,0.285) rectangle (0.285,0.015000000000000013);
\fill[black!60!white] (1.815,0.285) rectangle (2.085,0.015000000000000013);
\fill[black!60!white] (2.1149999999999998,0.285) rectangle (2.385,0.015000000000000013);
\fill[black!60!white] (3.615,0.285) rectangle (3.885,0.015000000000000013);
\node[] at (1.95,-0.7) {  $\text{per}\left(A_{13}\right)=2809$};\end{tikzpicture}

\caption{The best constructions we found for Question~\ref{ques:brualdi}, with $\sigma = 312$. Dark squares denote ones and light squares denote zeros.}
    \label{fig:brualdi1}
\end{figure}

It is clear that the matrices in Figure~\ref{fig:brualdi1} have some kind of pattern -- a large main branch going diagonally with some smaller branches coming out of it in the other direction -- but it is not at all clear what this pattern exactly is and what the limiting shape is as $n$ grows to infinity. 

Given two 0-1 square matrices $A$ and $B$, we define $A\ast B$ and $A\circ B$ to be the the matrices obtained from the direct sum of $A$ and $B$, by turning four zeros into ones as in Figure~\ref{fig:operations}. It is easy to verify that if $A$ and $B$ are 312-avoiding, then so are $A\ast B$ and $A\circ B$. Note that it is not the case that every matrix in Figure~\ref{fig:brualdi1} can be obtained by starting from the $1\times 1$ identity matrix, and applying these two operations repeatedly in some order; the first instance of this is at $n=10$, where the largest permanent one can obtain this way has value 394. We can use these operations (or in fact just the matrix direct sum operation) to give a simple lower bound on $f_{312}(n)$:
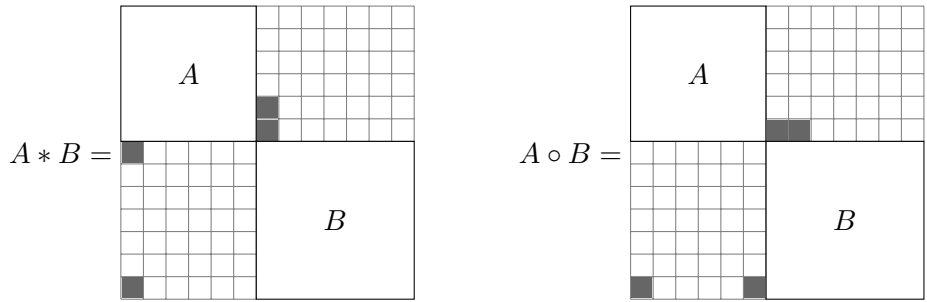
\begin{figure}[hbt]
\centering
\begin{tikzpicture}
\draw[step=0.3cm,gray,thin] (0,0) grid (1.8,2.1);
\draw[step=0.3cm,gray,thin] (1.8,2.1) grid (3.9,3.9);

\fill[black!60!white] (0.015,0.015) rectangle (0.285,0.285);
\fill[black!60!white] (0.015,1.815) rectangle (0.285,2.085);
\fill[black!60!white] (1.815,2.115) rectangle (2.085,2.385);
\fill[black!60!white] (1.815,2.415) rectangle (2.085,2.685);
\draw (0,2.1) rectangle (1.8,3.9);
\draw (1.8,0) rectangle (3.9,2.1);

\node[] at (0.9,3.0) {  $A$};
\node[] at (2.85,1.05) {  $B$};
\node[] at (-0.8,1.95) {  $A\ast B =$};

\end{tikzpicture}
\hspace{1cm}
\begin{tikzpicture}
\draw[step=0.3cm,gray,thin] (0,0) grid (1.8,2.1);
\draw[step=0.3cm,gray,thin] (1.8,2.1) grid (3.9,3.9);

\fill[black!60!white] (0.015,0.015) rectangle (0.285,0.285);
\fill[black!60!white] (1.515,0.015) rectangle (1.785,0.285);
\fill[black!60!white] (1.815,2.115) rectangle (2.085,2.385);
\fill[black!60!white] (2.115,2.115) rectangle (2.385,2.385);
\draw (0,2.1) rectangle (1.8,3.9);
\draw (1.8,0) rectangle (3.9,2.1);

\node[] at (0.9,3.0) {  $A$};
\node[] at (2.85,1.05) {  $B$};
\node[] at (-0.8,1.95) {  $A\circ B =$};

\end{tikzpicture}
\caption{The definition of $A\ast B$ and $A\circ B$. They are both formed from the direct sum of $A$ and $B$, by turning the four zeros indicated by the dark squares into ones. }
\label{fig:operations}
\end{figure}

\begin{proposition}
$$ 2^{0.89n}\leq f_{312}(n)\leq 24^{n/4}\approx 2^{1.15n}$$
\end{proposition}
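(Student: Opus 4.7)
\emph{Lower bound.} The plan is to use that direct sums of 312-avoiding matrices are 312-avoiding, combined with an explicit construction.

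First I would verify that $A \oplus B \in M_{n+m}(312)$ whenever $A \in M_n(312)$ and $B \in M_m(312)$. The key combinatorial point is that in a 312-pattern $(r_1,c_3),(r_2,c_1),(r_3,c_2)$, the leftmost column $c_1$ pairs with the \emph{middle} row $r_2$, while the largest column $c_3$ pairs with the \emph{smallest} row $r_1$. If we tried to split the three 1s across the two diagonal blocks of $A \oplus B$, then any 1 in the $B$-block would have both row and column index strictly larger than $n$, and so one checks that no distribution of the three 1s into the blocks is consistent with this row-column pairing unless all three 1s already lie in one block. Hence $A$ or $B$ itself would contain 312.

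Since $\text{per}(A \oplus B) = \text{per}(A)\cdot\text{per}(B)$, this gives the supermultiplicativity $f_{312}(n+m) \geq f_{312}(n) f_{312}(m)$. By Fekete's lemma, $L := \lim_n f_{312}(n)^{1/n}$ exists and equals $\sup_n f_{312}(n)^{1/n}$, so it suffices to exhibit a single $n_0$ with $f_{312}(n_0) \geq 2^{0.89\, n_0}$. The matrices in Figure~\ref{fig:brualdi1} nearly achieve this ($2809^{1/13}\approx 2^{0.881}$), and to cross the threshold I would apply the $\ast$ operation of Figure~\ref{fig:operations} to self-compose: using that the four extra 1s of $A\ast A$ strictly enlarge the set of perfect matchings beyond the direct-sum matchings, one tracks that $\text{per}(A\ast A) / \text{per}(A)^2$ stays bounded away from $1$ under iteration, which after finitely many steps boosts the per-vertex rate past $2^{0.89}$. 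The inequality $f_{312}(n) \geq 2^{0.89 n}$ then follows from $f_{312}(n) \geq f_{312}(n_0)^{\lfloor n/n_0\rfloor}$ for all sufficiently large $n$ (and for the remaining small $n$ one uses the explicit values in Figure~\ref{fig:brualdi1}).

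\emph{Upper bound.} The plan is to combine the Bregman--Minc inequality with a linear bound on the number of 1s in a 312-avoiding matrix. Bregman--Minc gives
$$\text{per}(A) \leq \prod_{i=1}^n (r_i!)^{1/r_i}$$
for any 0-1 matrix $A$ with row sums $r_1,\ldots,r_n$. Setting $h(r) := (\log r!)/r$, the successive differences $h(2)-h(1),\,h(3)-h(2),\,h(4)-h(3),\ldots$ are easily seen to be strictly decreasing and positive, so $h$ is concave and increasing on the positive integers, and its piecewise-linear extension is concave on $[1,\infty)$. Jensen then yields
$$\sum_{i=1}^n h(r_i) \leq n\, h\!\left(\frac{\sum_i r_i}{n}\right) \leq n\, h(4) = \frac{n \log 24}{4},$$
provided the average row sum is at most $4$. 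Exponentiating gives $\text{per}(A) \leq 24^{n/4}$. The remaining ingredient is $f(n,312) \leq 4n$, i.e.\ any 312-avoiding 0-1 matrix has at most $4n$ ones; this follows from the Marcus--Tardos resolution of the F\"uredi--Hajnal conjecture applied to the pattern 312, with the explicit linear constant $4$ (which matches what we observe empirically: the extremal constructions in Figure~\ref{fig:brualdi1} all have density $\le 4n-4$).

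\emph{Main obstacle.} The delicate step is obtaining the sharp constant $f(n,312) \le 4n$: the general Marcus--Tardos bound gives only $f(n,P) \leq c_P n$ with $c_P$ exponential in the size of $P$, so to get the exact upper bound $24^{n/4}$ one needs a direct argument tailored to the pattern 312, for instance a charging argument on the NW--SE staircase structure of 1s in 312-avoiding matrices.
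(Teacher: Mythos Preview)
Your argument is the paper's: Br\`egman--Minc plus a linear bound on the total number of ones, then optimize over the row-sum profile. Your Jensen step makes the ``all $r_i$ as equal as possible'' optimization explicit, which is fine. One correction: the bound $\sum_i r_i\le 4n-4$ for $312$-avoiding $n\times n$ $0$--$1$ matrices is elementary and is simply cited in the paper from Brualdi--Cao; it does not need Marcus--Tardos, and obtaining the sharp linear constant for a single $3\times 3$ permutation pattern is a short direct argument. So the ``main obstacle'' you flag is not actually an obstacle.

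\textbf{Lower bound.} Here there is a genuine gap. Supermultiplicativity (via direct sums, or via the paper's $\ast$) together with Fekete correctly yields the rate $2809^{1/13}\approx 2^{0.881}$ from $A_{13}$. The problem is your bootstrap to $0.89$. You assert, without proof, that $\mathrm{per}(A\ast A)/\mathrm{per}(A)^2$ stays bounded away from $1$ under iteration; one would rather expect this ratio to drift toward $1$, since the four extra ones of $\ast$ become negligible as the matrices double in size. Even granting a uniform lower bound $c>1$ on all these ratios, iterating $A\mapsto A\ast A$ from $A_{13}$ gives after $k$ steps only
\[
\mathrm{per}(A_k)^{1/(2^k\cdot 13)} \;=\; 2809^{1/13}\Bigl(\textstyle\prod_{i<k} c_i^{\,2^{-i-1}}\Bigr)^{1/13},
\]
whose limit is at least $(2809\,c)^{1/13}$; this exceeds $2^{0.89}$ only when $c\gtrsim 1.08$, a quantitative estimate you never establish. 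So ``finitely many steps'' does not close the gap. The paper bypasses this entirely with a single further explicit computation: it checks that $\mathrm{per}(A_{13}\circ A_{11})=5113196$ exceeds $2^{0.89}$ raised to the size of that matrix, after which Fekete delivers the exponent $0.89$ immediately. Replacing your iteration paragraph by one such concrete permanent calculation would fix the argument.
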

\begin{proof}
For the upper bound we can use the Br\`egman--Minc theorem~\cite{bregman}, which states that the permanent of an $n\times n$ 0-1 matrix $A=(a_{ij})$ with row sums $r_{i}=a_{i1}+\ldots +a_{in}$ for $i=1,\ldots ,n$ can be estimated by
$$\text{per}(A)\leq \prod_{i=1}^n (r_i!)^{1/r_i}.$$
An $n\times n$ binary matrix without a 312 pattern can have at most $4n-4$ ones, see e.g.~\cite{brualdi}. The above formula is maximized when all $r_i$-s are as equal as possible, this yields the bound in the proposition as $4!=24$.

For the lower bound, observe that for $A_{8}$ as in Figure~\ref{fig:brualdi1}, we have $\text{per}\left(A_{13}\right) = 2809 > 2^{0.88\cdot 13}$. As $\text{per}(A\ast B)\geq \text{per}(A)\cdot \text{per}(B)$ for any $A,B$, this implies that $f_{312}(n+m)\geq f_{312}(n)\cdot f_{312}(m)$ for any integers $m,n\geq 1$. Hence $g(n) := \log_2(f_{312}(n))$ is a superadditive sequence. By Fekete's lemma~\cite{fekete} the limit $\lim_{n\rightarrow\infty}\frac{g(n)}{n}$ exists and is equal to $\sup \frac{g(n)}{n}$ which is at least $\frac{g(13)}{13}\geq 0.88$. 
The better bound of $0.89$ comes from the calculation $\text{per}(A_{13}\circ A_{11})= 5113196>2^{0.89\cdot 25}$.
\end{proof}

It would be interesting to determine the constant $c$ such that $f_{312}(n)=c^{n(1+o(1))}$, and the shape of the optimal constructions as $n\rightarrow\infty$.

\section{Finding constructions with LP solvers}\label{sec:lp}

Many problems in extremal combinatorics can be phrased as linear programs. When this is the case, LP solvers can outperform more general machine learning algorithms, hence LP solvers can be extremely useful for quickly checking conjectures for the existence of small counterexamples. In this section we will present two counterexamples we have obtained this way. The first one is an interesting covering problem in the hypercube, where the authors made the very natural conjecture that embedding the problem in more dimensions cannot make it easier. The second one concerns a conjecture that is closely related to the classical set-pair system inequality of Bollob\'as.

Throughout this section we will use the methods of~\cite{mypaper} to set up the problems as linear programs, and the LP solver Gurobi~\cite{gurobi}.

\subsection{An exact covering problem in the hypercube}\label{subsec:carla}

A vector $a\in \mathbb{R}^n$ and a scalar $b\in \mathbb{R}$ determine the hyperplane $$\{x\in \mathbb{R}^n : \langle a,x \rangle = a_1x_1 + \ldots + a_nx_n = b  \}.$$
A classical result of Alon and F\"uredi~\cite{alon1993covering} states that the number of hyperplanes required to cover precisely $2^{n}-1$ vertices of the hypercube $\{0,1\}^n$, without covering the last vertex, is $n$. This is tight: we can cover $\{0,1\}^n\setminus \textbf{0}$, where $\textbf{0}$ denotes the all zero vector, with the $n$ hyperplanes $\{x:x_i = 1\}$, where $i\in\{1,2,\ldots, n\}$. Several variations of this result have been studied in the literature, see e.g.~the three papers~\cite{aaronson,bishnoi,sauermann} from this year.

For a set $B\subset \{0,1\}^n$ denote by $\text{ec}(B)$ the \emph{exact cover number} of $B$, i.e.~the minimum number of hyperplanes whose union intersects $\{0,1\}^n$ in precisely $B$. Note that the result of Alon and F\"uredi says that $\text{ec}(\{0,1\}^n\setminus \textbf{0})=n$.

Aaronson, Groenland, Grzesik, Kielak, and Johnston~\cite{aaronson} raised the following question. Given a set $B\subset \{0,1\}^k$ and integer $n\geq k$, observe that we have the inequality
\begin{equation*}
    \text{ec}(\{0,1\}^{n} \setminus (B\times \{0\}^{n-k})) \leq n-k + \text{ec}(\{0,1\}^k\setminus B).
\end{equation*}
Indeed, we can cover $\{0,1\}^{n} \setminus (B\times \{0\}^{n-k})$ by using the hyperplanes used to cover $\{0,1\}^k\setminus B$, together with the $n-k$ hyperplanes $\{x:x_i=1\}$, where $i=k+1,\ldots,n$. Intuitively, it seems plausible that this inequality is in fact always sharp: it is not clear how it could help to  place the ‘same subset’ in a space with more dimensions. 
\begin{conjecture}[\label{conj:carla}Aaronson--Groenland--Grzesik--Kielak--Johnston~\cite{aaronson}]
For any $B\subset \{0,1\}^k$ and $n\in\mathbb{N}$ with $n\geq k$, we have
$$\text{ec}(\{0,1\}^{n} \setminus (B\times \{0\}^{n-k})) = n-k + \text{ec}(\{0,1\}^k\setminus B).$$
\end{conjecture}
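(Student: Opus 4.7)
The forward inequality is immediate from the construction described just before the conjecture, so the task is to establish the reverse inequality or find a counterexample. Since the conjecture sits in the LP-solvers section of the paper, I expect the authors exhibit a counterexample rather than a proof, and my plan is to set up an integer linear program and hunt for small counterexamples before trying any combinatorial argument.

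The ILP formulation proceeds as follows. Fix $k$, fix $B \subseteq \{0,1\}^k$, and fix a target dimension $n \ge k$. Two affine hyperplanes that have the same trace on $\{0,1\}^n$ are interchangeable for this problem, so the relevant variables are indexed by the finite set $\mathcal{T}_n$ of distinct traces $H \cap \{0,1\}^n$. Every such trace is realized by a hyperplane $\langle a, x\rangle = b$ with integer coefficients whose absolute values are bounded by an explicit function of $n$, so $\mathcal{T}_n$ can be enumerated by sweeping a bounded integer box. Discarding any trace that meets $B \times \{0\}^{n-k}$ leaves the admissible family $\mathcal{A}_n$. Introduce binary variables $x_T$ for $T \in \mathcal{A}_n$ and solve
\begin{equation*}
\min \sum_{T \in \mathcal{A}_n} x_T \quad \text{subject to} \quad \sum_{T \ni v} x_T \ge 1 \text{ for every } v \in \{0,1\}^n \setminus (B\times \{0\}^{n-k}).
\end{equation*}
The base value $\text{ec}(\{0,1\}^k \setminus B)$ is the $n=k$ case of the same program, so a counterexample is any $n > k$ whose optimum is strictly smaller than $n-k + \text{ec}(\{0,1\}^k \setminus B)$.

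The search strategy is to iterate over small $k$, say $k = 3, 4, 5$, enumerate $B \subseteq \{0,1\}^k$ up to coordinate symmetry, compute $\text{ec}(\{0,1\}^k \setminus B)$, then increment $n$ by one and re-solve, comparing the solver's optimum to the predicted value. The optimal hyperplanes returned by Gurobi provide an explicit witness. If no counterexample surfaces in these regimes, a structural attack would be worth attempting: view an optimal exact cover in $\{0,1\}^n$ as inducing covers on each coordinate face $\{x_j = 1\}$ for $j > k$ together with the cover problem on the face $\{x_{k+1} = \cdots = x_n = 0\}$, and try to argue that each of the $n-k$ extra faces forces at least one additional hyperplane beyond what is needed for $\{0,1\}^k \setminus B$.

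The main obstacle is pure combinatorial explosion: $|\mathcal{A}_n|$ grows rapidly with $n$ and with the coefficient bound, and even instantiating the constraint matrix becomes prohibitive for $n$ around $6$ or $7$. The essential practical step is symmetry reduction: the subgroup of hypercube automorphisms that fixes $B \times \{0\}^{n-k}$ (for instance arbitrary permutations of the last $n-k$ coordinates, together with any coordinate symmetry of $\{0,1\}^k$ that preserves $B$) acts on $\mathcal{A}_n$, and one can either quotient by this action or add symmetry-breaking constraints to the ILP. A secondary concern is making the coefficient bound on hyperplanes rigorous, so that the enumerated $\mathcal{A}_n$ truly captures every realizable trace; a clean way to do this is to prove that an optimum cover can always be realized by hyperplanes whose defining vectors lie in an integer lattice of height explicitly bounded in $n$, and then verify computationally that enlarging the coefficient box beyond that bound does not change the optimum.
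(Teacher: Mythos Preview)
Your diagnosis is correct: the conjecture is false, and the paper refutes it by an ILP search rather than proving it. Your proposed methodology---enumerate hyperplane traces on $\{0,1\}^n$, discard those meeting $B\times\{0\}^{n-k}$, and solve a set-cover ILP---is exactly what the paper does. So the approach is the same.

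The gap is that your proposal is a \emph{plan}, not an executed refutation. You describe how to search but do not produce the witness. The paper carries the search out and lands on a concrete counterexample: take $k=4$, $n=6$, and
\[
B=\{1000,\,1111,\,1001,\,1011,\,0110,\,0001,\,0010,\,0111\}.
\]
A short case analysis shows $\text{ec}(\{0,1\}^4\setminus B)\ge 3$, while $\{0,1\}^6\setminus(B\times\{0\}^2)$ is covered exactly by the four hyperplanes
\begin{align*}
-x_2+x_5+x_6&=1,\\
x_2-x_3+x_6&=1,\\
x_1+x_2-x_4+x_5+x_6&=2,\\
2x_1-x_2-2x_3+2x_4+x_5-x_6&=0,
\end{align*}
so $\text{ec}(\{0,1\}^6\setminus(B\times\{0\}^2))\le 4 < 2+3$. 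Your concern about rigorous coefficient bounds is legitimate in principle, but it evaporates once the explicit hyperplanes are written down: verification is a finite check, independent of how the candidates were enumerated. Your fallback structural argument (peeling off one coordinate face at a time) would have failed, as the counterexample shows.
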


Once we have fixed a set $B$ and integers $n$ and $k$, finding the relevant exact cover numbers can be phrased as an integer program, with indicator variables for all possible intersections of $\{0,1\}^n$ with a hyperplane. By sampling the set $B$ according to some ad hoc heuristics and solving the resulting linear programs, we were eventually able to find the following counterexample to Conjecture~\ref{conj:carla}.

Let $n=6, k=4$, and   $B = \{1000, 1111, 1001, 1011, 0110, 0001, 0010, 0111\}$. It is straightforward to check via a case analysis that $\{0,1\}^4 \setminus B$ cannot be covered with two hyperplanes, so that $\text{ec}(\{0,1\}^4\setminus B)\geq 3$. On the other hand, surprisingly we can cover $\{0, 1\}^6 \setminus (B \times \{0\}^2)$ with four hyperplanes:
\begin{equation*}
\begin{split}
        \qquad - x_2 ~ \qquad ~ ~ ~  \qquad  +x_5       +x_6 &=1\\
        \qquad     x_2 - ~ x_3 ~ ~ ~  \qquad \qquad      +x_6 &=1\\
        x_1  + x_2 \qquad \quad     - ~ x_4 +x_5     +x_6 &=2\\
        2x_1 - x_2  -2x_3 +2x_4+x_5     -x_6 &=0\\
\end{split}
\end{equation*}
See Figure~\ref{fig:carla} for an illustration of this example.

\begin{figure}[!ht]
    \centering
    \input{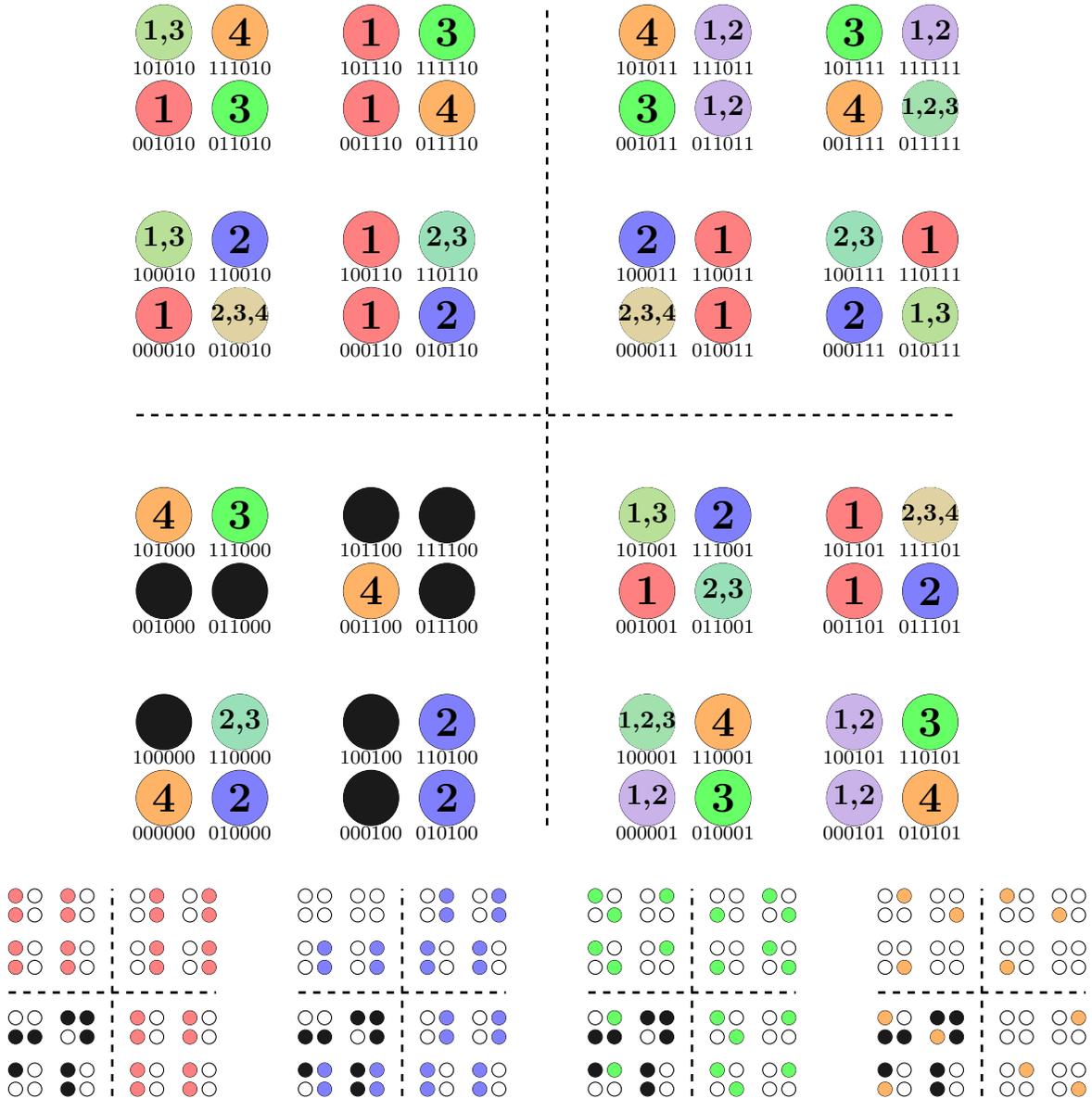}
    
    ~

    \begin{tikzpicture}
\draw (0.0,0.0) circle (0.1cm);
\draw (1.75,0.0) circle (0.1cm);
\draw (0.0,1.75) circle (0.1cm);
\draw (1.75,1.75) circle (0.1cm);
\draw (0.75,0.0) circle (0.1cm);
\draw (2.5,0.0) circle (0.1cm);
\draw (0.75,1.75) circle (0.1cm);
\draw (2.5,1.75) circle (0.1cm);
\draw (0.0,0.75) circle (0.1cm);
\draw (1.75,0.75) circle (0.1cm);
\draw (0.0,2.5) circle (0.1cm);
\draw (1.75,2.5) circle (0.1cm);
\draw (0.75,0.75) circle (0.1cm);
\draw (2.5,0.75) circle (0.1cm);
\draw (0.75,2.5) circle (0.1cm);
\draw (2.5,2.5) circle (0.1cm);
\draw (0.275,0.0) circle (0.1cm);
\draw (2.025,0.0) circle (0.1cm);
\draw (0.275,1.75) circle (0.1cm);
\draw (2.025,1.75) circle (0.1cm);
\draw (1.025,0.0) circle (0.1cm);
\draw (2.775,0.0) circle (0.1cm);
\draw (1.025,1.75) circle (0.1cm);
\draw (2.775,1.75) circle (0.1cm);
\draw (0.275,0.75) circle (0.1cm);
\draw (2.025,0.75) circle (0.1cm);
\draw (0.275,2.5) circle (0.1cm);
\draw (2.025,2.5) circle (0.1cm);
\draw (1.025,0.75) circle (0.1cm);
\draw (2.775,0.75) circle (0.1cm);
\draw (1.025,2.5) circle (0.1cm);
\draw (2.775,2.5) circle (0.1cm);
\draw (0.0,0.275) circle (0.1cm);
\draw (1.75,0.275) circle (0.1cm);
\draw (0.0,2.025) circle (0.1cm);
\draw (1.75,2.025) circle (0.1cm);
\draw (0.75,0.275) circle (0.1cm);
\draw (2.5,0.275) circle (0.1cm);
\draw (0.75,2.025) circle (0.1cm);
\draw (2.5,2.025) circle (0.1cm);
\draw (0.0,1.025) circle (0.1cm);
\draw (1.75,1.025) circle (0.1cm);
\draw (0.0,2.775) circle (0.1cm);
\draw (1.75,2.775) circle (0.1cm);
\draw (0.75,1.025) circle (0.1cm);
\draw (2.5,1.025) circle (0.1cm);
\draw (0.75,2.775) circle (0.1cm);
\draw (2.5,2.775) circle (0.1cm);
\draw (0.275,0.275) circle (0.1cm);
\draw (2.025,0.275) circle (0.1cm);
\draw (0.275,2.025) circle (0.1cm);
\draw (2.025,2.025) circle (0.1cm);
\draw (1.025,0.275) circle (0.1cm);
\draw (2.775,0.275) circle (0.1cm);
\draw (1.025,2.025) circle (0.1cm);
\draw (2.775,2.025) circle (0.1cm);
\draw (0.275,1.025) circle (0.1cm);
\draw (2.025,1.025) circle (0.1cm);
\draw (0.275,2.775) circle (0.1cm);
\draw (2.025,2.775) circle (0.1cm);
\draw (1.025,1.025) circle (0.1cm);
\draw (2.775,1.025) circle (0.1cm);
\draw (1.025,2.775) circle (0.1cm);
\draw (2.775,2.775) circle (0.1cm);
\fill[black!90!white] (0.0,0.275) circle (0.1cm);
\fill[black!90!white] (1.025,1.025) circle (0.1cm);
\fill[black!90!white] (0.75,0.275) circle (0.1cm);
\fill[black!90!white] (0.75,1.025) circle (0.1cm);
\fill[black!90!white] (0.275,0.75) circle (0.1cm);
\fill[black!90!white] (0.75,0.0) circle (0.1cm);
\fill[black!90!white] (0.0,0.75) circle (0.1cm);
\fill[black!90!white] (1.025,0.75) circle (0.1cm);
\fill[red!50!white] (1.75,0.0) circle (0.1cm);
\fill[red!50!white] (0.0,1.75) circle (0.1cm);
\fill[red!50!white] (2.5,0.0) circle (0.1cm);
\fill[red!50!white] (0.75,1.75) circle (0.1cm);
\fill[red!50!white] (1.75,0.75) circle (0.1cm);
\fill[red!50!white] (0.0,2.5) circle (0.1cm);
\fill[red!50!white] (2.5,0.75) circle (0.1cm);
\fill[red!50!white] (0.75,2.5) circle (0.1cm);
\fill[red!50!white] (2.025,1.75) circle (0.1cm);
\fill[red!50!white] (2.775,1.75) circle (0.1cm);
\fill[red!50!white] (2.025,2.5) circle (0.1cm);
\fill[red!50!white] (2.775,2.5) circle (0.1cm);
\fill[red!50!white] (1.75,0.275) circle (0.1cm);
\fill[red!50!white] (0.0,2.025) circle (0.1cm);
\fill[red!50!white] (2.5,0.275) circle (0.1cm);
\fill[red!50!white] (0.75,2.025) circle (0.1cm);
\fill[red!50!white] (1.75,1.025) circle (0.1cm);
\fill[red!50!white] (0.0,2.775) circle (0.1cm);
\fill[red!50!white] (2.5,1.025) circle (0.1cm);
\fill[red!50!white] (0.75,2.775) circle (0.1cm);
\fill[red!50!white] (2.025,2.025) circle (0.1cm);
\fill[red!50!white] (2.775,2.025) circle (0.1cm);
\fill[red!50!white] (2.025,2.775) circle (0.1cm);
\fill[red!50!white] (2.775,2.775) circle (0.1cm);
\draw [line width=0.35mm, dashed] (1.3875,-0.1) -- (1.3875,2.875);
\draw [line width=0.35mm, dashed] (-0.1,1.3875) -- (2.875,1.3875);
\end{tikzpicture}
    \hspace{0.3in}
    \begin{tikzpicture}
\draw (0.0,0.0) circle (0.1cm);
\draw (1.75,0.0) circle (0.1cm);
\draw (0.0,1.75) circle (0.1cm);
\draw (1.75,1.75) circle (0.1cm);
\draw (0.75,0.0) circle (0.1cm);
\draw (2.5,0.0) circle (0.1cm);
\draw (0.75,1.75) circle (0.1cm);
\draw (2.5,1.75) circle (0.1cm);
\draw (0.0,0.75) circle (0.1cm);
\draw (1.75,0.75) circle (0.1cm);
\draw (0.0,2.5) circle (0.1cm);
\draw (1.75,2.5) circle (0.1cm);
\draw (0.75,0.75) circle (0.1cm);
\draw (2.5,0.75) circle (0.1cm);
\draw (0.75,2.5) circle (0.1cm);
\draw (2.5,2.5) circle (0.1cm);
\draw (0.275,0.0) circle (0.1cm);
\draw (2.025,0.0) circle (0.1cm);
\draw (0.275,1.75) circle (0.1cm);
\draw (2.025,1.75) circle (0.1cm);
\draw (1.025,0.0) circle (0.1cm);
\draw (2.775,0.0) circle (0.1cm);
\draw (1.025,1.75) circle (0.1cm);
\draw (2.775,1.75) circle (0.1cm);
\draw (0.275,0.75) circle (0.1cm);
\draw (2.025,0.75) circle (0.1cm);
\draw (0.275,2.5) circle (0.1cm);
\draw (2.025,2.5) circle (0.1cm);
\draw (1.025,0.75) circle (0.1cm);
\draw (2.775,0.75) circle (0.1cm);
\draw (1.025,2.5) circle (0.1cm);
\draw (2.775,2.5) circle (0.1cm);
\draw (0.0,0.275) circle (0.1cm);
\draw (1.75,0.275) circle (0.1cm);
\draw (0.0,2.025) circle (0.1cm);
\draw (1.75,2.025) circle (0.1cm);
\draw (0.75,0.275) circle (0.1cm);
\draw (2.5,0.275) circle (0.1cm);
\draw (0.75,2.025) circle (0.1cm);
\draw (2.5,2.025) circle (0.1cm);
\draw (0.0,1.025) circle (0.1cm);
\draw (1.75,1.025) circle (0.1cm);
\draw (0.0,2.775) circle (0.1cm);
\draw (1.75,2.775) circle (0.1cm);
\draw (0.75,1.025) circle (0.1cm);
\draw (2.5,1.025) circle (0.1cm);
\draw (0.75,2.775) circle (0.1cm);
\draw (2.5,2.775) circle (0.1cm);
\draw (0.275,0.275) circle (0.1cm);
\draw (2.025,0.275) circle (0.1cm);
\draw (0.275,2.025) circle (0.1cm);
\draw (2.025,2.025) circle (0.1cm);
\draw (1.025,0.275) circle (0.1cm);
\draw (2.775,0.275) circle (0.1cm);
\draw (1.025,2.025) circle (0.1cm);
\draw (2.775,2.025) circle (0.1cm);
\draw (0.275,1.025) circle (0.1cm);
\draw (2.025,1.025) circle (0.1cm);
\draw (0.275,2.775) circle (0.1cm);
\draw (2.025,2.775) circle (0.1cm);
\draw (1.025,1.025) circle (0.1cm);
\draw (2.775,1.025) circle (0.1cm);
\draw (1.025,2.775) circle (0.1cm);
\draw (2.775,2.775) circle (0.1cm);
\fill[black!90!white] (0.0,0.275) circle (0.1cm);
\fill[black!90!white] (1.025,1.025) circle (0.1cm);
\fill[black!90!white] (0.75,0.275) circle (0.1cm);
\fill[black!90!white] (0.75,1.025) circle (0.1cm);
\fill[black!90!white] (0.275,0.75) circle (0.1cm);
\fill[black!90!white] (0.75,0.0) circle (0.1cm);
\fill[black!90!white] (0.0,0.75) circle (0.1cm);
\fill[black!90!white] (1.025,0.75) circle (0.1cm);
\fill[blue!50!white] (1.75,0.0) circle (0.1cm);
\fill[blue!50!white] (1.75,1.75) circle (0.1cm);
\fill[blue!50!white] (2.5,0.0) circle (0.1cm);
\fill[blue!50!white] (2.5,1.75) circle (0.1cm);
\fill[blue!50!white] (0.275,0.0) circle (0.1cm);
\fill[blue!50!white] (0.275,1.75) circle (0.1cm);
\fill[blue!50!white] (1.025,0.0) circle (0.1cm);
\fill[blue!50!white] (1.025,1.75) circle (0.1cm);
\fill[blue!50!white] (2.025,0.75) circle (0.1cm);
\fill[blue!50!white] (2.025,2.5) circle (0.1cm);
\fill[blue!50!white] (2.775,0.75) circle (0.1cm);
\fill[blue!50!white] (2.775,2.5) circle (0.1cm);
\fill[blue!50!white] (1.75,0.275) circle (0.1cm);
\fill[blue!50!white] (1.75,2.025) circle (0.1cm);
\fill[blue!50!white] (2.5,0.275) circle (0.1cm);
\fill[blue!50!white] (2.5,2.025) circle (0.1cm);
\fill[blue!50!white] (0.275,0.275) circle (0.1cm);
\fill[blue!50!white] (0.275,2.025) circle (0.1cm);
\fill[blue!50!white] (1.025,0.275) circle (0.1cm);
\fill[blue!50!white] (1.025,2.025) circle (0.1cm);
\fill[blue!50!white] (2.025,1.025) circle (0.1cm);
\fill[blue!50!white] (2.025,2.775) circle (0.1cm);
\fill[blue!50!white] (2.775,1.025) circle (0.1cm);
\fill[blue!50!white] (2.775,2.775) circle (0.1cm);
\draw [line width=0.35mm, dashed] (1.3875,-0.1) -- (1.3875,2.875);
\draw [line width=0.35mm, dashed] (-0.1,1.3875) -- (2.875,1.3875);
\end{tikzpicture}
    \hspace{0.3in}
    \begin{tikzpicture}
\draw (0.0,0.0) circle (0.1cm);
\draw (1.75,0.0) circle (0.1cm);
\draw (0.0,1.75) circle (0.1cm);
\draw (1.75,1.75) circle (0.1cm);
\draw (0.75,0.0) circle (0.1cm);
\draw (2.5,0.0) circle (0.1cm);
\draw (0.75,1.75) circle (0.1cm);
\draw (2.5,1.75) circle (0.1cm);
\draw (0.0,0.75) circle (0.1cm);
\draw (1.75,0.75) circle (0.1cm);
\draw (0.0,2.5) circle (0.1cm);
\draw (1.75,2.5) circle (0.1cm);
\draw (0.75,0.75) circle (0.1cm);
\draw (2.5,0.75) circle (0.1cm);
\draw (0.75,2.5) circle (0.1cm);
\draw (2.5,2.5) circle (0.1cm);
\draw (0.275,0.0) circle (0.1cm);
\draw (2.025,0.0) circle (0.1cm);
\draw (0.275,1.75) circle (0.1cm);
\draw (2.025,1.75) circle (0.1cm);
\draw (1.025,0.0) circle (0.1cm);
\draw (2.775,0.0) circle (0.1cm);
\draw (1.025,1.75) circle (0.1cm);
\draw (2.775,1.75) circle (0.1cm);
\draw (0.275,0.75) circle (0.1cm);
\draw (2.025,0.75) circle (0.1cm);
\draw (0.275,2.5) circle (0.1cm);
\draw (2.025,2.5) circle (0.1cm);
\draw (1.025,0.75) circle (0.1cm);
\draw (2.775,0.75) circle (0.1cm);
\draw (1.025,2.5) circle (0.1cm);
\draw (2.775,2.5) circle (0.1cm);
\draw (0.0,0.275) circle (0.1cm);
\draw (1.75,0.275) circle (0.1cm);
\draw (0.0,2.025) circle (0.1cm);
\draw (1.75,2.025) circle (0.1cm);
\draw (0.75,0.275) circle (0.1cm);
\draw (2.5,0.275) circle (0.1cm);
\draw (0.75,2.025) circle (0.1cm);
\draw (2.5,2.025) circle (0.1cm);
\draw (0.0,1.025) circle (0.1cm);
\draw (1.75,1.025) circle (0.1cm);
\draw (0.0,2.775) circle (0.1cm);
\draw (1.75,2.775) circle (0.1cm);
\draw (0.75,1.025) circle (0.1cm);
\draw (2.5,1.025) circle (0.1cm);
\draw (0.75,2.775) circle (0.1cm);
\draw (2.5,2.775) circle (0.1cm);
\draw (0.275,0.275) circle (0.1cm);
\draw (2.025,0.275) circle (0.1cm);
\draw (0.275,2.025) circle (0.1cm);
\draw (2.025,2.025) circle (0.1cm);
\draw (1.025,0.275) circle (0.1cm);
\draw (2.775,0.275) circle (0.1cm);
\draw (1.025,2.025) circle (0.1cm);
\draw (2.775,2.025) circle (0.1cm);
\draw (0.275,1.025) circle (0.1cm);
\draw (2.025,1.025) circle (0.1cm);
\draw (0.275,2.775) circle (0.1cm);
\draw (2.025,2.775) circle (0.1cm);
\draw (1.025,1.025) circle (0.1cm);
\draw (2.775,1.025) circle (0.1cm);
\draw (1.025,2.775) circle (0.1cm);
\draw (2.775,2.775) circle (0.1cm);
\fill[black!90!white] (0.0,0.275) circle (0.1cm);
\fill[black!90!white] (1.025,1.025) circle (0.1cm);
\fill[black!90!white] (0.75,0.275) circle (0.1cm);
\fill[black!90!white] (0.75,1.025) circle (0.1cm);
\fill[black!90!white] (0.275,0.75) circle (0.1cm);
\fill[black!90!white] (0.75,0.0) circle (0.1cm);
\fill[black!90!white] (0.0,0.75) circle (0.1cm);
\fill[black!90!white] (1.025,0.75) circle (0.1cm);
\fill[green!60!white] (1.75,1.75) circle (0.1cm);
\fill[green!60!white] (1.75,2.5) circle (0.1cm);
\fill[green!60!white] (2.025,0.0) circle (0.1cm);
\fill[green!60!white] (0.275,1.75) circle (0.1cm);
\fill[green!60!white] (2.775,1.75) circle (0.1cm);
\fill[green!60!white] (2.025,0.75) circle (0.1cm);
\fill[green!60!white] (0.275,2.5) circle (0.1cm);
\fill[green!60!white] (2.775,2.5) circle (0.1cm);
\fill[green!60!white] (1.75,0.275) circle (0.1cm);
\fill[green!60!white] (0.0,2.025) circle (0.1cm);
\fill[green!60!white] (2.5,2.025) circle (0.1cm);
\fill[green!60!white] (1.75,1.025) circle (0.1cm);
\fill[green!60!white] (0.0,2.775) circle (0.1cm);
\fill[green!60!white] (2.5,2.775) circle (0.1cm);
\fill[green!60!white] (0.275,0.275) circle (0.1cm);
\fill[green!60!white] (2.775,0.275) circle (0.1cm);
\fill[green!60!white] (1.025,2.025) circle (0.1cm);
\fill[green!60!white] (0.275,1.025) circle (0.1cm);
\fill[green!60!white] (2.775,1.025) circle (0.1cm);
\fill[green!60!white] (1.025,2.775) circle (0.1cm);
\draw [line width=0.35mm, dashed] (1.3875,-0.1) -- (1.3875,2.875);
\draw [line width=0.35mm, dashed] (-0.1,1.3875) -- (2.875,1.3875);
\end{tikzpicture}
    \hspace{0.3in}
    \begin{tikzpicture}
\draw (0.0,0.0) circle (0.1cm);
\draw (1.75,0.0) circle (0.1cm);
\draw (0.0,1.75) circle (0.1cm);
\draw (1.75,1.75) circle (0.1cm);
\draw (0.75,0.0) circle (0.1cm);
\draw (2.5,0.0) circle (0.1cm);
\draw (0.75,1.75) circle (0.1cm);
\draw (2.5,1.75) circle (0.1cm);
\draw (0.0,0.75) circle (0.1cm);
\draw (1.75,0.75) circle (0.1cm);
\draw (0.0,2.5) circle (0.1cm);
\draw (1.75,2.5) circle (0.1cm);
\draw (0.75,0.75) circle (0.1cm);
\draw (2.5,0.75) circle (0.1cm);
\draw (0.75,2.5) circle (0.1cm);
\draw (2.5,2.5) circle (0.1cm);
\draw (0.275,0.0) circle (0.1cm);
\draw (2.025,0.0) circle (0.1cm);
\draw (0.275,1.75) circle (0.1cm);
\draw (2.025,1.75) circle (0.1cm);
\draw (1.025,0.0) circle (0.1cm);
\draw (2.775,0.0) circle (0.1cm);
\draw (1.025,1.75) circle (0.1cm);
\draw (2.775,1.75) circle (0.1cm);
\draw (0.275,0.75) circle (0.1cm);
\draw (2.025,0.75) circle (0.1cm);
\draw (0.275,2.5) circle (0.1cm);
\draw (2.025,2.5) circle (0.1cm);
\draw (1.025,0.75) circle (0.1cm);
\draw (2.775,0.75) circle (0.1cm);
\draw (1.025,2.5) circle (0.1cm);
\draw (2.775,2.5) circle (0.1cm);
\draw (0.0,0.275) circle (0.1cm);
\draw (1.75,0.275) circle (0.1cm);
\draw (0.0,2.025) circle (0.1cm);
\draw (1.75,2.025) circle (0.1cm);
\draw (0.75,0.275) circle (0.1cm);
\draw (2.5,0.275) circle (0.1cm);
\draw (0.75,2.025) circle (0.1cm);
\draw (2.5,2.025) circle (0.1cm);
\draw (0.0,1.025) circle (0.1cm);
\draw (1.75,1.025) circle (0.1cm);
\draw (0.0,2.775) circle (0.1cm);
\draw (1.75,2.775) circle (0.1cm);
\draw (0.75,1.025) circle (0.1cm);
\draw (2.5,1.025) circle (0.1cm);
\draw (0.75,2.775) circle (0.1cm);
\draw (2.5,2.775) circle (0.1cm);
\draw (0.275,0.275) circle (0.1cm);
\draw (2.025,0.275) circle (0.1cm);
\draw (0.275,2.025) circle (0.1cm);
\draw (2.025,2.025) circle (0.1cm);
\draw (1.025,0.275) circle (0.1cm);
\draw (2.775,0.275) circle (0.1cm);
\draw (1.025,2.025) circle (0.1cm);
\draw (2.775,2.025) circle (0.1cm);
\draw (0.275,1.025) circle (0.1cm);
\draw (2.025,1.025) circle (0.1cm);
\draw (0.275,2.775) circle (0.1cm);
\draw (2.025,2.775) circle (0.1cm);
\draw (1.025,1.025) circle (0.1cm);
\draw (2.775,1.025) circle (0.1cm);
\draw (1.025,2.775) circle (0.1cm);
\draw (2.775,2.775) circle (0.1cm);
\fill[black!90!white] (0.0,0.275) circle (0.1cm);
\fill[black!90!white] (1.025,1.025) circle (0.1cm);
\fill[black!90!white] (0.75,0.275) circle (0.1cm);
\fill[black!90!white] (0.75,1.025) circle (0.1cm);
\fill[black!90!white] (0.275,0.75) circle (0.1cm);
\fill[black!90!white] (0.75,0.0) circle (0.1cm);
\fill[black!90!white] (0.0,0.75) circle (0.1cm);
\fill[black!90!white] (1.025,0.75) circle (0.1cm);
\fill[orange!60!white] (0.0,0.0) circle (0.1cm);
\fill[orange!60!white] (1.75,1.75) circle (0.1cm);
\fill[orange!60!white] (0.75,0.75) circle (0.1cm);
\fill[orange!60!white] (2.5,2.5) circle (0.1cm);
\fill[orange!60!white] (0.275,1.75) circle (0.1cm);
\fill[orange!60!white] (2.775,0.0) circle (0.1cm);
\fill[orange!60!white] (1.025,2.5) circle (0.1cm);
\fill[orange!60!white] (0.0,1.025) circle (0.1cm);
\fill[orange!60!white] (1.75,2.775) circle (0.1cm);
\fill[orange!60!white] (2.025,0.275) circle (0.1cm);
\fill[orange!60!white] (0.275,2.775) circle (0.1cm);
\fill[orange!60!white] (2.775,1.025) circle (0.1cm);
\draw [line width=0.35mm, dashed] (1.3875,-0.1) -- (1.3875,2.875);
\draw [line width=0.35mm, dashed] (-0.1,1.3875) -- (2.875,1.3875);
\end{tikzpicture}
    \caption{Covering $\{0,1\}^6\setminus (B\times \{0\}^2)$ with four hyperplanes. The circles represent the points of $\{0,1\}^6$. Black disks are elements of $B$, otherwise the numbers in a circle represent which of the four hyperplanes contain that point.}
    \label{fig:carla}
\end{figure}

\subsection{Weakly cross-intersecting \texorpdfstring{$(a,b)$}{(a,b)}-set systems}\label{subsec:domotor}

Let $\F$ be a family of pairs subsets $(A_1,B_1), (A_2,B_2), \ldots$ of $\mathbb{N}$. We say that $\F$ is an $(a,b)$\emph{-set system} if for every set-pair $(A_i,B_i)$ in $\F$, we have that $|A_i|=a$, $|B_i|=b$, and $A_i\cap B_i=\emptyset$. We say that $\F$ has the \emph{cross-intersecting property} if for any two pairs $(A_i,B_i)$ and $(A_j,B_j)$ in $\F$ with $i\neq j$, we have that $A_i\cap B_j\neq \emptyset$ and $A_j\cap B_i \neq \emptyset$. A classical result of Bollob\'as~\cite{bollsetpair} states that if $\F$ is an $(a,b)$-set system with the cross-intersecting property, then $|\F|\leq \binom{a+b}{b}$, independently of the size of the ground set. As shown by Frankl~\cite{frankl}, the same conclusion also holds if we relax the cross-intersecting property, and only require that $A_i\cap B_j\neq \emptyset$ when $i<j$. 

Kir\'aly, Nagy, P\'alv\"olgyi and Visontai~\cite{domotor} considered what happens when we further relax the cross-intersecting condition. They called an $(a,b)$-set system \emph{weakly cross-intersecting} if for any $i\neq j$, we have that at most one of the two sets $A_i\cap B_j$ and $A_j\cap B_i$ is empty. They denoted by $g(a,b)$ the maximum size of a weakly cross-intersecting $(a,b)$-set system. They showed that if $a+b\rightarrow\infty$, we have $g(a,b)\geq (2-o(1))\binom{a+b}{b}$. Among others, they raised the following problem:

\begin{problem}[Kir\'aly--Nagy--P\'alv\"olgyi--Visontai~\cite{domotor}\label{prob:domotor}]
Is $g(a,b) < 2\binom{a+b}{b}$?
\end{problem}
We have found a $(4,4)$-set system of size $146>140=2\cdot\binom{8}{4}$, which shows that the answer to Problem~\ref{prob:domotor} is in general ``no''. It is given in the appendix. It would be interesting to see how one can generalize this construction, and what the right order of magnitude of $g(a,b)$ is. A result of Tuza~\cite{tuza} implies $g(a,a)\leq 2^{2a}$, but even the answer to the following problem is not known.

\begin{problem}[Kir\'aly--Nagy--P\'alv\"olgyi--Visontai~\cite{domotor}]
Is $g(a,a)=o\left(2^{2a}\right)$?
\end{problem}

\section{Concluding remarks}\label{sec:concl}

The main contribution of the present work is that we have demonstrated some  success with applying reinforcement learning methods to find explicit constructions and counterexamples to problems in combinatorics. All examples presented in Section~\ref{sec:neural} used the cross-entropy method. According to~\cite{lapan2020deep}, the main advantages of the cross-entropy method are that it is a very simple algorithm that has good convergence and works well in simple environments that do not require us to learn complex, multistep policies. This makes it an ideal baseline method to try.

While the cross-entropy method works well in general, there exist a plethora of more sophisticated reinforcement learning algorithms that could potentially perform much better for some problems. It would be extremely interesting to see some success with refuting conjectures in combinatorics, graph theory, or other areas of mathematics, by finding explicit counterexamples using other reinforcement learning algorithms.

\begin{problem}
Use a different reinforcement learning algorithm to find an explicit counterexample to an open conjecture in mathematics.
\end{problem}

~

\textbf{Acknowledgment:} The author is very grateful to B\'alint Varga for help with various programming-related questions.

\vspace{-0.2in}

\bibliography{mybib}
\bibliographystyle{abbrv}

\appendix
\section{The construction for Problem~\ref{prob:domotor}}

The $(4,4)$-set system in $\{1, 2,\ldots, 11\}$ of size 146 is as follows.

~

\small{(1, 2, 3, 7 | 5, 6, 9, 10), \quad (1, 2, 3, 9 | 5, 8, 10, 11), \quad (1, 2, 4, 6 | 5, 8, 9, 11), \quad (1, 2, 4, 7 | 3, 5, 6, 11), \quad \\(1, 2, 4, 8 | 5, 7, 9, 11), \quad (1, 2, 4, 11 | 3, 5, 8, 9), \quad (1, 2, 5, 6 | 7, 9, 10, 11), \quad (1, 2, 5, 7 | 6, 8, 9, 11), \quad \\(1, 2, 5, 9 | 4, 6, 8, 11), \quad (1, 2, 5, 10 | 6, 7, 8, 9), \quad (1, 2, 6, 7 | 4, 9, 10, 11), \quad (1, 2, 6, 10 | 4, 8, 9, 11), \quad \\(1, 2, 7, 10 | 4, 5, 6, 8), \quad (1, 2, 8, 11 | 3, 5, 6, 10), \quad (1, 2, 9, 11 | 3, 4, 8, 10), \quad (1, 3, 4, 8 | 2, 5, 6, 11), \quad \\(1, 3, 4, 9 | 2, 5, 6, 8), \quad (1, 3, 4, 10 | 5, 6, 8, 9), \quad (1, 3, 5, 8 | 6, 9, 10, 11), \quad (1, 3, 5, 9 | 2, 6, 7, 10), \quad \\(1, 3, 5, 10 | 2, 6, 7, 11), \quad (1, 3, 6, 8 | 4, 5, 7, 9), \quad (1, 3, 6, 10 | 2, 4, 8, 9), \quad (1, 3, 7, 10 | 2, 4, 6, 9), \quad \\(1, 3, 7, 11 | 2, 5, 9, 10), \quad (1, 3, 8, 11 | 5, 6, 7, 10), \quad (1, 4, 5, 6 | 2, 9, 10, 11), \quad (1, 4, 5, 8 | 3, 6, 7, 11), \quad \\(1, 4, 5, 11 | 6, 8, 9, 10), \quad (1, 4, 6, 11 | 2, 5, 7, 8), \quad (1, 4, 7, 8 | 2, 3, 6, 9), \quad (1, 4, 7, 9 | 2, 3, 5, 11), \quad \\(1, 4, 8, 9 | 3, 5, 7, 11), \quad (1, 4, 9, 10 | 3, 6, 7, 8), \quad (1, 5, 6, 9 | 7, 8, 10, 11), \quad (1, 5, 6, 11 | 3, 4, 7, 9), \quad \\(1, 5, 7, 9 | 2, 6, 10, 11), \quad (1, 5, 7, 11 | 2, 4, 6, 10), \quad (1, 5, 8, 9 | 3, 4, 6, 7), \quad (1, 5, 8, 11 | 6, 7, 9, 10), \quad \\(1, 5, 10, 11 | 2, 6, 7, 9), \quad (1, 6, 7, 8 | 2, 5, 9, 11), \quad (1, 6, 7, 9 | 2, 4, 5, 11), \quad (1, 6, 7, 11 | 2, 3, 9, 10), \quad \\(1, 6, 10, 11 | 2, 3, 4, 5), \quad (1, 7, 8, 9 | 4, 5, 6, 11), \quad (1, 7, 9, 10 | 2, 4, 6, 8), \quad (1, 7, 9, 11 | 2, 3, 5, 10), \quad \\(1, 8, 9, 10 | 4, 5, 7, 11), \quad (1, 8, 10, 11 | 5, 6, 7, 9), \quad (1, 9, 10, 11 | 4, 5, 6, 7), \quad (2, 3, 4, 6 | 1, 5, 10, 11), \quad \\(2, 3, 4, 7 | 1, 5, 6, 10), \quad (2, 3, 5, 7 | 1, 4, 10, 11), \quad (2, 3, 5, 10 | 1, 7, 8, 11), \quad (2, 3, 5, 11 | 1, 6, 7, 10), \quad \\(2, 3, 6, 11 | 5, 8, 9, 10), \quad (2, 3, 7, 9 | 1, 4, 5, 10), \quad (2, 3, 7, 10 | 1, 5, 6, 11), \quad (2, 3, 8, 9 | 6, 7, 10, 11), \quad \\(2, 3, 8, 10 | 1, 5, 7, 9), \quad (2, 3, 8, 11 | 1, 5, 7, 10), \quad (2, 3, 9, 10 | 5, 7, 8, 11), \quad (2, 3, 9, 11 | 5, 7, 8, 10), \quad \\(2, 3, 10, 11 | 1, 6, 8, 9), \quad (2, 4, 5, 7 | 1, 8, 9, 10), \quad (2, 4, 5, 8 | 1, 3, 7, 10), \quad (2, 4, 5, 9 | 6, 8, 10, 11), \quad \\(2, 4, 6, 9 | 3, 5, 7, 8), \quad (2, 4, 6, 10 | 1, 5, 8, 9), \quad (2, 4, 7, 8 | 1, 3, 10, 11), \quad (2, 4, 8, 9 | 1, 3, 5, 7), \quad \\(2, 4, 8, 10 | 1, 3, 9, 11), \quad (2, 4, 9, 11 | 3, 6, 8, 10), \quad (2, 4, 10, 11 | 1, 3, 6, 9), \quad (2, 5, 6, 9 | 1, 7, 8, 10), \quad \\(2, 5, 6, 11 | 1, 4, 7, 9), \quad (2, 5, 7, 8 | 3, 4, 6, 10), \quad (2, 5, 7, 10 | 1, 8, 9, 11), \quad (2, 5, 7, 11 | 4, 8, 9, 10), \quad \\(2, 5, 8, 10 | 4, 6, 7, 9), \quad (2, 6, 7, 9 | 3, 8, 10, 11), \quad (2, 6, 7, 11 | 3, 4, 5, 9), \quad (2, 6, 8, 9 | 3, 4, 10, 11), \quad \\(2, 6, 8, 11 | 3, 5, 7, 9), \quad (2, 6, 9, 10 | 3, 4, 8, 11), \quad (2, 6, 10, 11 | 4, 5, 7, 8), \quad (2, 7, 8, 10 | 3, 4, 9, 11), \quad \\(2, 7, 9, 10 | 1, 3, 6, 11), \quad (2, 8, 9, 11 | 1, 3, 4, 7), \quad (2, 9, 10, 11 | 1, 5, 6, 8), \quad (3, 4, 5, 8 | 1, 7, 9, 11), \quad \\(3, 4, 5, 9 | 1, 2, 7, 11), \quad (3, 4, 5, 11 | 1, 2, 6, 10), \quad (3, 4, 6, 8 | 2, 5, 7, 11), \quad (3, 4, 6, 9 | 2, 5, 8, 11), \quad \\(3, 4, 6, 10 | 2, 8, 9, 11), \quad (3, 4, 7, 8 | 1, 2, 9, 11), \quad (3, 4, 7, 11 | 1, 2, 5, 9), \quad (3, 4, 9, 11 | 1, 2, 5, 10), \quad \\(3, 5, 6, 7 | 2, 4, 8, 10), \quad (3, 5, 6, 8 | 2, 4, 9, 10), \quad (3, 5, 7, 10 | 1, 2, 4, 11), \quad (3, 5, 10, 11 | 1, 2, 8, 9), \quad \\(3, 6, 7, 8 | 1, 4, 5, 9), \quad (3, 6, 7, 10 | 1, 4, 5, 8), \quad (3, 6, 8, 9 | 4, 7, 10, 11), \quad (3, 6, 9, 11 | 2, 4, 5, 7), \quad \\(3, 7, 8, 9 | 1, 2, 10, 11), \quad (3, 7, 8, 11 | 1, 4, 6, 9), \quad (3, 7, 9, 10 | 1, 2, 5, 6), \quad (3, 7, 9, 11 | 2, 4, 5, 10), \quad \\(3, 8, 9, 10 | 1, 5, 6, 7), \quad (3, 8, 10, 11 | 1, 2, 7, 9), \quad (4, 5, 6, 7 | 1, 2, 8, 10), \quad (4, 5, 6, 10 | 3, 7, 8, 9), \quad \\(4, 5, 6, 11 | 7, 8, 9, 10), \quad (4, 5, 7, 8 | 1, 2, 3, 11), \quad (4, 5, 7, 10 | 2, 6, 8, 11), \quad (4, 5, 9, 10 | 1, 3, 7, 11), \quad \\(4, 5, 9, 11 | 2, 3, 6, 10), \quad (4, 6, 7, 10 | 2, 3, 8, 9), \quad (4, 6, 8, 10 | 2, 3, 7, 9), \quad (4, 6, 8, 11 | 2, 7, 9, 10), \quad \\(4, 7, 8, 10 | 1, 2, 3, 5), \quad (4, 7, 8, 11 | 1, 3, 9, 10), \quad (4, 7, 10, 11 | 2, 3, 6, 8), \quad (4, 8, 9, 11 | 2, 3, 5, 7), \quad \\(4, 9, 10, 11 | 1, 2, 7, 8), \quad (5, 6, 7, 9 | 2, 3, 4, 10), \quad (5, 6, 7, 10 | 2, 3, 4, 8), \quad (5, 6, 8, 9 | 2, 3, 7, 10), \quad \\(5, 6, 8, 10 | 4, 7, 9, 11), \quad (5, 6, 9, 10 | 2, 4, 7, 8), \quad (5, 6, 9, 11 | 2, 7, 8, 10), \quad (5, 7, 8, 10 | 2, 3, 4, 11), \quad \\(5, 7, 9, 11 | 1, 4, 6, 8), \quad (5, 7, 10, 11 | 3, 4, 6, 9), \quad (5, 8, 9, 10 | 1, 4, 7, 11), \quad (5, 8, 9, 11 | 1, 2, 4, 6), \quad \\(5, 9, 10, 11 | 4, 6, 7, 8), \quad (6, 7, 9, 11 | 1, 3, 5, 8), \quad (6, 8, 9, 10 | 1, 4, 5, 11), \quad (6, 8, 10, 11 | 1, 2, 3, 4), \quad \\(7, 8, 9, 11 | 1, 3, 5, 10), \quad (7, 8, 10, 11 | 3, 4, 5, 6)
}
\end{document}